\newtheorem{theorem}{Theorem}[section]
\newtheorem{corollary}[theorem]{Corollary}
\newtheorem{lemma}[theorem]{Lemma}
\numberwithin{equation}{section}
\title{Convergence of linear combinations of iterates of an inner function}
\author{Artur Nicolau\thanks{Supported in part by the Generalitat de Catalunya (grant 2017 SGR 395) and the Spanish Ministerio de Ciencia e Innovaci\'on (project  MTM2017-85666-P)}\\
{\small
\begin{tabular}{@{}c}
Departament de Matem\`atiques\\
Universitat Aut\`onoma de Barcelona\\
and Centre de Recerca Matem\`atica\\
08193 Barcelona\\
{\tt artur@mat.uab.cat}
\end{tabular}}}
\date{}
\begin{document}
\maketitle
\begin{abstract}
\noindent Let $f$ be an inner function with $f(0)=0$ which is not a rotation and let $f^{n}$ be its $n$-th iterate. Let $\{a_{n}\}$ be a sequence of complex numbers. We prove that the series  $\sum a_{n}f^{n}(\xi)$ converges at almost every point~$\xi$ of the unit circle if and only if $\sum |a_n|^2 < \infty$. The main step in the proof is to show that under this assumption, the function $F= \sum a_n f^n$ has bounded mean oscillation. We also prove that $F$ is bounded on the unit disc if and only if $\sum |a_n| < \infty$. Finally we describe the sequences of coefficients $\{a_n \}$ such that $F$ belongs to other classical function spaces, as the disc algebra and the Dirichlet class. 
\end{abstract}
\section{Introduction}

 Inner functions are analytic mappings from the unit disc~$\mathbb{D}$ of the complex plane into itself whose radial limits are of modulus one at almost every point of the unit circle~$\partial\mathbb{D}$. Inner functions are a central notion in Complex and Functional Analysis. See for instance \cite{Ga}. Any inner function~$f$ induces a map from~$\partial\mathbb{D}$ into itself defined at almost every point~$\xi\in \partial\mathbb{D}$ by $f(\xi)=\lim\limits_{r\to 1}f(r\xi)$. Let $f^{n}$ denote the $n$-th iterate of~$f$,  which is again an inner function. The  main purpose of this paper is to study the convergence of linear combinations of iterates of an inner function and to show that in a certain sense, the iterates~$f^{n}\colon \partial\mathbb{D}\to \partial\mathbb{D}$ behave as independent random variables. This phenomena has been extensively studied in the context of random series of functions (\cite{Ka}) but it also occurs in other settings where independence is not present, as in the theory of lacunary series (see Section 6 of Chapter V and Section 5 of Chapter XVI of \cite{Zy}).

We start discussing pointwise convergence. Let $\{X_{n}\}$ be a sequence of independent random variables with mean~$0$ and finite variances~$V(X_{n})$. The classical Khintchine--Kolmogorov Convergence Theorem asserts that $\sum X_{n}$ converges almost surely if and only if $\sum V(X_{n})<\infty$. In our context we have the following analogous result.

\begin{theorem}\label{theo1.1}
Let $f$ be an inner function with $f(0)=0$ which is not a rotation and let $\{a_{n}\}$ be a sequence of complex numbers. Then the following conditions are equivalent:
\begin{enumerate}
\item[(a)] The series $\sum\limits_{n=1}^{\infty} a_{n}f^{n}(\xi)$ converges at almost every point~$\xi\in \partial\mathbb{D}$.

\item[(b)] The set $\Bigl\{\xi\in \partial\mathbb{D}:\sup\limits_{N}\Bigl|\sum\limits_{n=1}^{N} a_{n}f^{n}(\xi)\Bigr|<\infty\Bigr\}$ has positive Lebesgue measure.

\item[(c)] The complex numbers $\{a_{n}\}$ satisfy $\sum\limits_{n=1}^{\infty} |a_{n}|^{2}<\infty$.
\end{enumerate}
\end{theorem}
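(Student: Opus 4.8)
The plan is to prove the cycle (a)$\Rightarrow$(b)$\Rightarrow$(c)$\Rightarrow$(a). Since (a)$\Rightarrow$(b) is immediate, the content is in (c)$\Rightarrow$(a) and (b)$\Rightarrow$(c). Write $m$ for normalized Lebesgue measure on $\partial\mathbb D$, $b=f'(0)$ (so $|b|<1$ by the Schwarz lemma, as $f$ is not a rotation), and $S_N=\sum_{n=1}^N a_nf^n$. Two elementary facts are the backbone. First, since $f(0)=0$ each iterate $f^n$ preserves $m$: for bounded harmonic $u$ one has $\int u(f^n)\,dm=u(f^n(0))=u(0)=\int u\,dm$, and approximation extends this to all $\phi\in L^1$. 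Consequently, changing variables by $f^m$, for $n\ge m$,
\[
\int_{\partial\mathbb D} f^n\,\overline{f^m}\,dm=\int_{\partial\mathbb D} f^{n-m}(w)\,\overline w\,dm(w)=(f^{n-m})'(0)=b^{\,n-m},
\]
using the chain rule and $f^k(0)=0$. Hence the Gram matrix of $\{f^n\}$ is, after conjugation by the diagonal unitary $\operatorname{diag}(e^{-in\arg b})$, the classical Kac--Murdock--Szeg\H{o} matrix $\big(|b|^{|n-m|}\big)$, whose spectrum lies in $\big[\tfrac{1-|b|}{1+|b|},\tfrac{1+|b|}{1-|b|}\big]$; the same holds for every principal submatrix. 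Thus $\{f^n\}$ is a Riesz sequence in $L^2(\partial\mathbb D)$: $\|S_N-S_M\|_2^2\asymp\sum_{M<n\le N}|a_n|^2$. In particular (c) is equivalent to convergence of $\sum a_nf^n$ in $L^2$, say to $F$, and $\sum_{n\le N}|a_n|^2\lesssim\|S_N\|_2^2\lesssim\sum_{n\le N}|a_n|^2$.

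For (c)$\Rightarrow$(a) I would use the \emph{decreasing} filtration $\mathcal F_N=\sigma(f^N)$ (decreasing because $f^{N+1}=f\circ f^N$). By the same change-of-variables computation, localized inside the conditional expectation, $E[f^n\mid\mathcal F_N]=\overline b^{\,N-n}f^N$ for $n\le N$, while $E[f^n\mid\mathcal F_N]=f^n$ for $n\ge N$ (since then $f^n=f^{n-N}\circ f^N$ is $\mathcal F_N$-measurable). Summing against $\{a_n\}$ (using that $E[\,\cdot\mid\mathcal F_N]$ is an $L^2$-contraction) gives the identity
\[
S_N\;=\;F-E[F\mid\mathcal F_N]+f^N\sum_{n=1}^N a_n\,\overline b^{\,N-n}\qquad(N\ge1).
\]
Now $\{E[F\mid\mathcal F_N]\}_N$ is a backward martingale on $L^1$, hence converges $m$-a.e.\ (L\'evy's downward theorem); and $f^N\sum_{n=1}^N a_n\overline b^{\,N-n}\to0$ uniformly on $\partial\mathbb D$, because $|f^N|\le1$ and $\big|\sum_{j=0}^{N-1}a_{N-j}\overline b^{\,j}\big|\to0$ by dominated convergence (as $a_n\to0$ and $\sum_j|b|^j<\infty$). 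Therefore $S_N$ converges $m$-a.e.; since $S_N\to F$ in $L^2$ as well, the a.e.\ limit equals $F$, which is (a).

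For (b)$\Rightarrow$(c) I would use a zero--one law. For each $N_0$ one has $\sup_N|S_N(\xi)|=\infty$ if and only if $\sup_{N\ge N_0}\big|\sum_{n=N_0}^N a_nf^n(\xi)\big|=\infty$, and the latter is an $\mathcal F_{N_0}$-measurable function of $\xi$; hence $D:=\{\xi:\sup_N|S_N(\xi)|=\infty\}$ belongs to $\bigcap_{N_0}\mathcal F_{N_0}=\bigcap_{N_0}(f^{N_0})^{-1}(\mathcal B)$, the tail $\sigma$-algebra of the measure-preserving system $(\partial\mathbb D,m,f)$. Since an inner function fixing $0$ that is not a rotation is exact (a classical fact from the ergodic theory of inner functions), this tail $\sigma$-algebra is $m$-trivial, so $m(D)\in\{0,1\}$. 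Assume (b) holds; then $m(D)<1$, so $m(D)=0$, i.e.\ $\sup_N|S_N|<\infty$ $m$-a.e. Consequently $m(E_M)\to1$ for $E_M=\{\xi:\sup_N|S_N(\xi)|\le M\}$, and I fix $M$ with $m(\partial\mathbb D\setminus E_M)<\varepsilon$. Using a uniform bound $\|S_N\|_4\le A\|S_N\|_2$ (discussed below),
\[
\|S_N\|_2^2\le\int_{E_M}|S_N|^2\,dm+\|S_N\|_4^2\,m(\partial\mathbb D\setminus E_M)^{1/2}\le M^2+A^2\varepsilon^{1/2}\|S_N\|_2^2 .
\]
Choosing $\varepsilon<(2A^2)^{-2}$ forces $\|S_N\|_2^2\le2M^2$ for all $N$, whence $\sum_n|a_n|^2\lesssim\sup_N\|S_N\|_2^2<\infty$, which is (c).

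The missing ingredient, and the crux of the whole argument, is the uniform moment bound $\|S_N\|_{\mathrm{BMO}}\lesssim\big(\sum_{n\le N}|a_n|^2\big)^{1/2}$ --- equivalently, by John--Nirenberg together with $\int S_N\,dm=S_N(0)=0$, the uniform bound $\|S_N\|_p\lesssim_p\|S_N\|_2$ used above with $p=4$. This is precisely the assertion that the iterates $f^n\colon\partial\mathbb D\to\partial\mathbb D$ behave like independent mean-zero random variables not only at the level of $L^2$ (the Gram computation) but in every $L^p$, and it is the ``main step'' announced in the abstract. Proving it amounts to estimating the multilinear quantities $\int\prod_i f^{n_i}\,\overline{\prod_j f^{m_j}}\,dm$ and showing that the iterates equidistribute at all scales; the geometric correlation $b^{|n-m|}$ found above is the seed, but arranging the cancellation for products of three or more iterates is the real work. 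The remaining pieces --- the Riesz-sequence property, the backward-martingale identity, the tail-triviality (exactness) input, and the final measure-theoretic estimate --- are comparatively routine.
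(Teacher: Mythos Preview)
Your proposal is correct, and for the implication (c)$\Rightarrow$(a) it is genuinely different from---and cleaner than---the paper's argument. The paper proves (c)$\Rightarrow$(a) by first establishing the BMO estimate (its Theorem~1.2), then for each $N$ selecting points $z_N(\xi)$ with $|f^N(z_N)|=r_0$, applying Theorem~1.2 to control the oscillation of $S_N$ on Whitney-type arcs, and invoking a Vitali covering together with Borel--Cantelli to show $|S_N(\xi)-F(z_N(\xi))|\to 0$ a.e. Your route bypasses all of this: the identity $S_N=F-E[F\mid\mathcal F_N]+c_Nf^N$ (with $c_N=\sum_{n\le N}a_n\bar b^{\,N-n}\to 0$) reduces a.e.\ convergence of $S_N$ to L\'evy's downward martingale theorem. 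The conditional-expectation formula $E[f^n\mid\sigma(f^N)]=\bar b^{\,N-n}f^N$ for $n\le N$ is correct (it can be checked against $(f^N)^k$ and $\overline{(f^N)^k}$ by the same change-of-variables you used for the Gram matrix), and nothing in this direction requires the BMO or $L^4$ bound. So your final paragraph undersells your own argument: the ``missing ingredient'' is needed only for (b)$\Rightarrow$(c), not for (c)$\Rightarrow$(a).

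For (b)$\Rightarrow$(c) your approach coincides with the paper's. Both use exactness of $f$ (the paper cites Neuwirth) to upgrade ``positive measure'' to ``full measure''; your tail-$\sigma$-algebra formulation and the paper's $f^{-k}(f^k(A))\overset{\text{a.e.}}{=}A$ formulation are equivalent. Both then need a uniform $\|S_N\|_4\lesssim\|S_N\|_2$ bound to force $\sup_N\|S_N\|_2<\infty$; the paper packages this step via the Paley--Zygmund inequality rather than your H\"older splitting, but that is cosmetic. Note that the paper does \emph{not} derive the $L^4$ bound from its BMO theorem either: it imports it from an earlier paper (reference~[NS2]). So on this implication your proposal and the paper's proof stand on equal footing, with the $L^4$ inequality as an external black box in both.
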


Let $m$ denote normalized Lebesgue measure in $\partial\mathbb{D}$. For $0<p<\infty$ let $\mathbb{H}^{p}(\mathbb{D})$ be the classical Hardy space of analytic functions~$F$ in~$\mathbb{D}$ such that
$$
\|F\|_{p}^{p}=\sup_{r<1}\int_{\partial\mathbb{D}}|F(r\xi)|^{p}\,dm(\xi)<\infty.
$$
Let $\mathbb{H}^{\infty}(\mathbb{D})$ be the algebra of bounded analytic functions~$F$ in~$\mathbb{D}$ and $\|F\|_{\infty}=\sup \{|F(z)|: z\in \mathbb{D}\}$. Let~$\operatorname{BMOA}$ be the space of analytic functions~$F$ in~$\mathbb{D}$ such that
$$
\|F\|^{2}_{\operatorname{BMOA}(\mathbb{D})}=\sup_{z\in \mathbb{D}} \int_{\partial\mathbb{D}} |F(\xi)-F(z)|^{2} P(z,\xi)\,dm(\xi)<\infty.
$$
Here $P(z,\xi)=(1-|z|^{2})|\xi-z|^{-2}$ is the Poisson kernel. The subspace~$\operatorname{VMOA}$ is formed by the functions~$F\in\operatorname{BMOA}$ such that the integral above tends to $0$ as $|z|$ tends to~$1$.
Any function~$F\in \mathbb{H}^{p}(\mathbb{D})$, $0 <  p\le\infty$, has non-tangential limit, denoted by~$F(\xi)$, at almost every point~$\xi\in \partial\mathbb{D}$. Moreover if  $p\ge 1$, then 
$$
F(z)=\int_{\partial\mathbb{D}}F(\xi)P(z,\xi)\,dm(\xi),\quad z\in \mathbb{D}.
$$
We have $\mathbb{H}^{\infty}(\mathbb{D})\subseteq \operatorname{BMOA}(\mathbb{D})\subseteq \mathbb{H}^{p}(\mathbb{D})$ for any $0<p<\infty$. Actually $\operatorname{BMOA}(\mathbb{D})$ is the natural substitute of~$\mathbb{H}^{\infty}(\mathbb{D})$ in several results of the theory of Hardy spaces. See \cite{Ga} for all these well known results. 

The proof of Theorem~\ref{theo1.1} is based in the following $\operatorname{BMOA}$ type estimate.

\begin{theorem}\label{theo1.2}
Let $f$ be an inner function with $f(0)=0$ which is not a rotation. Then there exists a constant $C=C(f)>0$ such that for any positive integer~$N$, any set $\{a_{n}:n=1,\dots,N\}$ of complex numbers and any point $z\in\mathbb{D}$, we have
$$
\int_{\partial\mathbb{D}}\left| \sum_{n=1}^{N} a_{n}(f^{n}(\xi)-f^{n}(z))\right|^{2} P(z,\xi)\,dm(\xi)\le C\sum^{N}_{n=1}|a_{n}|^{2} (1-|f^{n}(z)|^{2}).
$$
\end{theorem}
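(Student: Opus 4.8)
The plan is to expand the left-hand side as a Gram-type sum and evaluate the pairwise inner products exactly. Write $\rho_n=|f^n(z)|$, $d\mu_z(\xi)=P(z,\xi)\,dm(\xi)$, and $b_n(\xi)=f^n(\xi)-f^n(z)$, so that the left-hand side is $\sum_{1\le n,k\le N}a_n\overline{a_k}\int_{\partial\mathbb D}b_n\overline{b_k}\,d\mu_z$. The crux of this step is the identity
$$\int_{\partial\mathbb D}b_n\overline{b_k}\,d\mu_z=f^n(z)\,\frac{1-\rho_k^2}{f^k(z)}\qquad(n\ge k),$$
interpreted as $0$ when $f^k(z)=0$. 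To prove it I would expand the product, use $\int f^j\,d\mu_z=f^j(z)$ for the three ``mixed'' terms, and for $\int f^n\overline{f^k}\,d\mu_z$ argue that, since $f^n=f^{n-k}\circ f^k$ and $|f^k(\xi)|=1$ a.e., the boundary function $f^n(\xi)\overline{f^k(\xi)}$ equals $(f^{n-k}/\mathrm{id})\circ f^k$ evaluated on $\partial\mathbb D$; here $f(0)=0$ gives $f^j(0)=0$ and (Schwarz) $|f^j(\zeta)|\le|\zeta|$, so $f^{n-k}/\mathrm{id}$ is analytic and bounded by $1$, whence $(f^{n-k}/\mathrm{id})\circ f^k\in \mathbb H^\infty(\mathbb D)$ and its $\mu_z$-mean equals its value at $z$, namely $f^n(z)/f^k(z)$. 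In particular the diagonal terms contribute exactly $\sum_n|a_n|^2(1-\rho_n^2)$, the claimed right-hand side, so everything reduces to controlling the off-diagonal part.

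Collecting the off-diagonal sum over the larger index and setting $B_n=\sum_{k<n}\overline{a_k}(1-\rho_k^2)/f^k(z)$, the left-hand side equals $\sum_n|a_n|^2(1-\rho_n^2)+2\operatorname{Re}\sum_n a_nf^n(z)B_n$. Writing $t_n=\rho_n^2/(1-\rho_n^2)$ and $c_n=|a_n|\sqrt{1-\rho_n^2}$, the triangle inequality and Cauchy--Schwarz give $|2\operatorname{Re}\sum_n a_nf^n(z)B_n|\le 2(\sum_n c_n^2)^{1/2}(\sum_n t_n|B_n|^2)^{1/2}$ together with $|B_n|\le\sum_{k<n}|a_k|(1-\rho_k^2)/\rho_k=\sum_{k<n}c_k/\sqrt{t_k}$. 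Thus it suffices to establish the discrete weighted Hardy inequality $\sum_{n}t_n(\sum_{k<n}c_k/\sqrt{t_k})^2\le C(f)\sum_n c_n^2$. Since $\rho_n$ is non-increasing, so is $t_n$, and by the Muckenhoupt (Tomaselli) criterion for this inequality the whole matter comes down to $\sup_{m\ge1}(\sum_{n>m}t_n)(\sum_{k\le m}t_k^{-1})\le C(f)$.

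The theorem is therefore reduced to a purely dynamical statement about the orbit quantities $t_n(z)=|f^n(z)|^2/(1-|f^n(z)|^2)$: uniformly in $z\in\mathbb D$ one has $\sum_{n>m}t_n(z)\le C\,t_m(z)$ and $\sum_{k\le m}t_k(z)^{-1}\le C\,t_m(z)^{-1}$, i.e.\ $(t_n(z))_n$ decays at a geometric rate depending only on $f$. I would prove this in two regimes. Away from the circle it is soft: $f/\mathrm{id}$ is a non-constant inner function, so $\lambda:=\sup_{|w|\le 1/2}|f(w)/w|^2<1$, and since $t_{n+1}/t_n=|f(w)/w|^2(1-|w|^2)/(1-|f(w)|^2)\le\lambda$ whenever $|w|=|f^n(z)|\le 1/2$, while the orbit then remains in $\{|w|\le 1/2\}$, the sequence decays geometrically from the first time it enters this disc. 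The delicate regime, and the main obstacle, is when the orbit lingers near $\partial\mathbb D$ before entering $\{|w|\le 1/2\}$: there I would use the Julia--Carath\'eodory theorem together with the repelling nature of the boundary fixed/periodic structure of $f$ (forced by $f(0)=0$, $f$ not a rotation) to show that $1-|f^n(z)|^2$ grows along the orbit at a uniformly geometric rate until the orbit reaches $\{|w|\le 1/2\}$, and hence that $t_n$ is geometric in that range too. Securing the uniformity of this rate in $z$ --- keeping it bounded away from $1$ as the starting point approaches $\partial\mathbb D$ --- is the essential difficulty, and is where I expect the real work to lie.
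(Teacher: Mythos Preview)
Your overall architecture---compute the pairwise inner products exactly, isolate the diagonal, and bound the off-diagonal---is precisely the paper's. The identity $\int b_n\overline{b_k}\,d\mu_z=f^n(z)(1-\rho_k^2)/f^k(z)$ for $n\ge k$ is the paper's formula~(2.3). (A small slip: when $f^k(z)=0$ the value is $f'(0)^{n-k}$, not $0$; but since $\{z:f^n(z)=0\ \text{for some }n\le N\}$ is discrete, the paper simply assumes $f^n(z)\ne 0$ by continuity, and you should too.)

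Where you diverge is in packaging the off-diagonal bound. The paper splits at the first index $\ell=\ell(z)$ with $|f^\ell(z)|\le r_0$ and estimates three pieces by hand; you instead Cauchy--Schwarz to a discrete weighted Hardy inequality and invoke the Muckenhoupt criterion. Both routes reduce to exactly the same pair of dynamical facts about the orbit $\rho_n=|f^n(z)|$: (i) once $\rho_n\le r_0$ the moduli decay geometrically (your $\lambda<1$ argument is fine), and (ii) while $\rho_n\ge r_0$ the defects $1-\rho_n$ grow geometrically. So the two approaches are equivalent in content.

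The part you flag as ``the essential difficulty''---the uniform geometric growth of $1-|f^n(z)|$ near the boundary---is in fact a two-line consequence of Schwarz's lemma, due to Pommerenke; this is the paper's Lemma~2.1(a). Applying Schwarz to $f(z)/z$ gives $|f(z)|\le\psi(|z|)$ with $\psi(x)=x(x+|f'(0)|)/(1+|f'(0)|x)$, hence $1-|f(z)|\ge(1-|z|)\frac{1+|z|}{1+|f'(0)||z|}$, and for $|z|\ge r$ this yields $1-|z|\le c(r,f)(1-|f(z)|)$ with $c(r,f)=\frac{1+|f'(0)|r}{1+r}<1$. Iterating gives exactly the uniform geometric rate you need. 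You do \emph{not} need Julia--Carath\'eodory or any analysis of boundary fixed/periodic points; those tools are local to individual boundary points and would not straightforwardly deliver the required uniformity in $z$ anyway. Replace that paragraph with the Pommerenke--Schwarz estimate and your argument goes through.
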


Let $f$ be an analytic selfmapping of $\mathbb{D}$ with $f(0)=0$ which is not a rotation. The classical Denjoy--Wolff Theorem asserts that $f^{n}$ tends to~$0$ uniformly on compact sets of~$\mathbb{D}$. See Chapter V of \cite{Sh}. The proof of Theorem~\ref{theo1.2} uses the interplay between those dynamical properties of the inner function~$f$ as a self mapping of $\partial\mathbb{D}$ and those as a self mapping of~$\mathbb{D}$. In particular we use a result due to Pommerenke (\cite{Po}) which provides an exponential decay in the Denjoy--Wolff Theorem. 

It was proved in~\cite{NS2} that $\sum a_{n}f^{n}$ converges in $\mathbb{H}^{2}(\mathbb{D})$ if and only if $\sum |a_{n}|^{2}<\infty$. As a consequence of Theorem~\ref{theo1.2} we will show that under this last condition, we actually have $\sum a_{n}f^{n}\in\operatorname{BMOA}(\mathbb{D})$.

\begin{corollary}\label{coro1.3}
Let $f$ be an inner function with $f(0)=0$ which is not a rotation and let $\{a_{n}\}$ be a sequence of complex numbers. Assume $\sum\limits^{\infty}_{n=1}|a_{n}|^{2}<\infty$.

\begin{enumerate}
\item[(a)] Then $F=\sum\limits_{n=1}^{\infty} a_{n}f^{n}\in \operatorname{BMOA}(\mathbb{D})$ and for almost every $\xi\in\partial\mathbb{D}$ we have
\begin{equation}\label{eq1.1}
\lim_{r\to 1}F(r\xi)=\sum^{\infty}_{n=1}a_{n}f^{n}(\xi).
\end{equation}
Moreover there exists a constant~$C=C(f)>0$ only depending on~$f$, such that
$$
C^{-1} \sum^{\infty}_{n=1}|a_{n}|^{2} \le \|F\|^{2}_{\operatorname{BMOA}(\mathbb{D})}\le C\sum^{\infty}_{n=1}|a_{n}|^{2}.
$$

\item[(b)] If $f$ is a finite Blaschke product, then $F=\sum\limits^{\infty}_{n=1}a_{n}f^{n}\in\operatorname{VMOA}$.
\end{enumerate}
\end{corollary}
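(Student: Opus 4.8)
\emph{Strategy.} I would extract everything from the single uniform bound of Theorem~\ref{theo1.2}, applied to partial sums and, after zeroing out the first coefficients, to tails as well. Write $S_N=\sum_{n=1}^{N}a_nf^n$, so $S_N(0)=0$ since $f(0)=0$. Applying Theorem~\ref{theo1.2} to the coefficients $0,\dots,0,a_{M+1},\dots,a_N$ gives
$$
\|S_N-S_M\|_{\operatorname{BMOA}(\mathbb D)}^2=\sup_{z\in\mathbb D}\int_{\partial\mathbb D}\Bigl|\sum_{n=M+1}^{N}a_n\bigl(f^n(\xi)-f^n(z)\bigr)\Bigr|^2P(z,\xi)\,dm(\xi)\le C\sum_{n=M+1}^{N}|a_n|^2,
$$
so $\{S_N\}$ is Cauchy in $\operatorname{BMOA}(\mathbb D)$, hence, since $\|G\|_{\mathbb H^2}\le\|G\|_{\operatorname{BMOA}}$ whenever $G(0)=0$, also in $\mathbb H^2(\mathbb D)$; its limit is $F$, which in particular recovers the sufficiency half of the $\mathbb H^2$ result of \cite{NS2}. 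Thus $F\in\operatorname{BMOA}(\mathbb D)$, $S_N\to F$ in $\operatorname{BMOA}(\mathbb D)$, and letting $N\to\infty$ in $\|S_N\|_{\operatorname{BMOA}}^2\le C\sum_{n\le N}|a_n|^2$ yields the upper estimate. For the lower estimate, evaluating the defining integral at $z=0$ and using $F(0)=0$ gives $\|F\|_{\operatorname{BMOA}}^2\ge\|F\|_{\mathbb H^2}^2=\lim_N\|S_N\|_{\mathbb H^2}^2$. Set $\mu=f'(0)$; since $f$ fixes $0$ and is not a rotation, $|\mu|<1$ by the Schwarz lemma. For $n\le m$ one has $f^m=f^n\cdot(h\circ f^n)$ with $h=f^{m-n}/z$ inner, and because every inner function fixing the origin preserves $m$ we obtain $\langle f^n,f^m\rangle_{\mathbb H^2}=\overline{h(0)}=\overline{\mu}^{\,m-n}$. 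Writing $\mu=|\mu|e^{i\alpha}$ and $c_n=a_ne^{in\alpha}$, this Gram-matrix identity becomes
$$
\|S_N\|_{\mathbb H^2}^2=\sum_{n,k=1}^{N}c_n\overline{c_k}\,|\mu|^{|n-k|}=\int_{\partial\mathbb D}\Bigl|\sum_{n=1}^{N}c_n\xi^n\Bigr|^2P(|\mu|,\xi)\,dm(\xi)\ge\frac{1-|\mu|}{1+|\mu|}\sum_{n=1}^{N}|a_n|^2,
$$
using $P(|\mu|,\cdot)\ge(1-|\mu|)/(1+|\mu|)$; letting $N\to\infty$ completes the two-sided estimate.

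\emph{The boundary identity~\eqref{eq1.1}.} So far we only know $\{S_N\}$ converges in every $\mathbb H^p(\mathbb D)$, hence $S_N(\xi)$ converges a.e.\ only along a subsequence; to get the full pointwise statement I would use a backward martingale. Since $f$ is inner with $f(0)=0$ it preserves $m$, so the $\sigma$-algebras $\mathcal A_n=f^{-n}(\mathcal B)$, where $\mathcal B$ is the Borel $\sigma$-algebra of $\partial\mathbb D$, form a decreasing filtration. A direct computation with Hardy-space integrals gives $\mathbb E[f^n\mid\mathcal A_{n+1}]=\overline{\mu}f^{n+1}$, so $e_n:=f^n-\overline{\mu}f^{n+1}$ satisfies $\mathbb E[e_n\mid\mathcal A_{n+1}]=0$; in particular the $e_n$ are pairwise orthogonal in $\mathbb H^2$ with $\|e_n\|_{\mathbb H^2}^2=1-|\mu|^2$. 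Put $c_k=\sum_{n=1}^{k}a_n\overline{\mu}^{\,k-n}$, an $\ell^2$ sequence with $c_k\to0$ (Young's inequality, as $\sum|a_n|^2<\infty$ and $|\mu|<1$). Since $c_1=a_1$ and $c_k-\overline{\mu}c_{k-1}=a_k$, telescoping $e_k=f^k-\overline{\mu}f^{k+1}$ gives the algebraic identity $\sum_{k=1}^{K}c_ke_k=S_K-\overline{\mu}\,c_Kf^{K+1}$, and letting $K\to\infty$ (note $\|c_Kf^{K+1}\|_{\mathbb H^2}=|c_K|\to0$) shows $F=\sum_k c_ke_k$ in $\mathbb H^2$. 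Then the tails $\widetilde N_j:=\sum_{k\ge j}c_ke_k=\mathbb E[F\mid\mathcal A_j]$ form a backward martingale, so $\widetilde N_j(\xi)$ converges for a.e.\ $\xi$ by Doob's theorem. Consequently
$$
S_K(\xi)=\widetilde N_1(\xi)-\widetilde N_{K+1}(\xi)+\overline{\mu}\,c_K\,f^{K+1}(\xi)
$$
converges for a.e.\ $\xi$, the last term tending to $0$ because $|c_K|\to0$ and $|f^{K+1}(\xi)|=1$ a.e.; since $S_K\to F$ in $\mathbb H^2$, the a.e.\ limit of $S_K(\xi)$ equals the radial boundary value $\lim_{r\to1}F(r\xi)$, which is exactly~\eqref{eq1.1}.

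\emph{Part (b), and the main difficulty.} If $f$ is a finite Blaschke product, then $f$, and hence every iterate $f^n$ and every partial sum $S_N$, extends analytically across $\partial\mathbb D$, so $S_N\in\operatorname{VMOA}$; as $\operatorname{VMOA}$ is a closed subspace of $\operatorname{BMOA}(\mathbb D)$ and $S_N\to F$ there, we conclude $F\in\operatorname{VMOA}$. The main obstacle throughout is the pointwise identity~\eqref{eq1.1}: the $\operatorname{BMOA}$ estimate says nothing about it, so one must bring in the behaviour of $f$ as a measure-preserving transformation of $\partial\mathbb D$; the computation $\mathbb E[f^n\mid\mathcal A_{n+1}]=\overline{\mu}f^{n+1}$, which makes the $e_n$ martingale differences, is the crux.
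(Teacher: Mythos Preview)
Your argument is correct throughout. The upper $\operatorname{BMOA}$ bound and the lower bound are handled essentially as in the paper (your Poisson--kernel identity $\|S_N\|_{\mathbb H^2}^2=\int_{\partial\mathbb D}|\sum c_n\xi^n|^2P(|\mu|,\xi)\,dm$ is exactly a proof of Lemma~\ref{lem2.3}, which the paper simply quotes). The two places where your route genuinely diverges are the following.

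For the boundary identity~\eqref{eq1.1}, the paper does not use a martingale argument at all: it invokes the estimate~\eqref{eq2.5} established inside the proof of Theorem~\ref{theo1.1}, which is obtained by a Vitali covering of $\partial\mathbb D$ by arcs $I(N,\xi_k)$ chosen so that $|f^N(r_N\xi_k)|=r_0$, a Borel--Cantelli argument controlling $m(E(N))$ via Theorem~\ref{theo1.2}, and a tail bound coming from Corollary~\ref{coro2.2}. Your approach instead exploits that $f$ is measure--preserving on $\partial\mathbb D$: the key computation $\mathbb E[f^n\mid\sigma(f^{n+1})]=\overline{f'(0)}\,f^{n+1}$ turns $e_n=f^n-\overline\mu f^{n+1}$ into reverse martingale differences, after which Doob's backward convergence theorem does all the work. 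This is more conceptual and shorter, and it yields the a.e.\ convergence of $\sum a_nf^n(\xi)$ without appealing to the quantitative machinery of Section~\ref{sec2}; on the other hand, the paper's approach simultaneously feeds into Theorem~\ref{theo1.1} and gives the finer information that the partial sums are close to $F$ along a specific nontangential sequence. For part~(b), the paper shows directly from Theorem~\ref{theo1.2} that $\sum|a_n|^2(1-|f^n(z)|^2)\to0$ as $|z|\to1$, using that $N(z)\to\infty$ for a finite Blaschke product. Your argument---$S_N$ is analytic across $\partial\mathbb D$, hence in $\operatorname{VMOA}$, and $\operatorname{VMOA}$ is closed in $\operatorname{BMOA}$---is shorter and perfectly valid; the paper's computation, however, gives a quantitative rate and is later reused (e.g.\ in Lemma~\ref{lem3.5}).
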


The following Khintchine type estimate follows easily from the previous result.

\begin{corollary}\label{coro1.4}
Let $f$ be an inner function with $f(0)=0$ which is not a rotation and let $\{a_{n}\}$ be a sequence of complex numbers. Assume $\sum\limits^{\infty}_{n=1}|a_{n}|^{2}<\infty$. Then $\sum\limits^{\infty}_{n=1}a_{n} f^{n}\in \mathbb{H}^{p}(\mathbb{D})$ for any~$0<p<\infty$. Moreover for any $0<p<\infty$, there exists a constant~$C(p,f)>0$ such that for any sequence of complex numbers~$\{a_{n}\}$ we have
$$
C(p,f)^{-1}\left(	\sum^{\infty}_{n=1}|a_{n}|^{2}\right)^{1/2} \le\left\|\sum_{n=1}^{\infty}a_{n}f^{n}\right\|_{p}\le C(p,f)\left(\sum^{\infty}_{n=1}|a_{n}|^{2}\right)^{1/2}.
$$
\end{corollary}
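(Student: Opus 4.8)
The plan is to obtain both inequalities from Corollary~\ref{coro1.3} together with standard facts about Hardy spaces and $\operatorname{BMOA}$; the statement contains nothing deep beyond that corollary. Write $F=\sum_{n=1}^{\infty}a_{n}f^{n}$ (the function furnished by Corollary~\ref{coro1.3}(a)), and note that $f(0)=0$ forces $f^{n}(0)=0$ for every $n$, hence $F(0)=0$. For the upper estimate, Corollary~\ref{coro1.3}(a) gives $F\in\operatorname{BMOA}(\mathbb{D})$ with $\|F\|_{\operatorname{BMOA}(\mathbb{D})}^{2}\le C(f)\sum|a_{n}|^{2}$. By the John--Nirenberg inequality (see \cite{Ga}), every $G\in\operatorname{BMOA}(\mathbb{D})$ lies in $\mathbb{H}^{p}(\mathbb{D})$ for all $0<p<\infty$ with $\|G\|_{p}\le c(p)\bigl(\|G\|_{\operatorname{BMOA}(\mathbb{D})}+|G(0)|\bigr)$; applying this to $G=F$ and using $F(0)=0$ yields $F\in\mathbb{H}^{p}(\mathbb{D})$ and $\|F\|_{p}\le C(p,f)\bigl(\sum|a_{n}|^{2}\bigr)^{1/2}$, which is the right-hand inequality.

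For the lower estimate I would first record the case $p=2$, namely
$$
C(f)^{-1}\sum_{n=1}^{\infty}|a_{n}|^{2}\le\|F\|_{2}^{2}\le C(f)\sum_{n=1}^{\infty}|a_{n}|^{2}.
$$
This is proved in \cite{NS2} and can also be checked directly: since $f^{m}$ is inner with $f^{m}(0)=0$ its boundary map pushes $m$ forward to $m$, so writing $f^{n}=f^{n-m}\circ f^{m}$ for $n\ge m$ a change of variables gives $\int_{\partial\mathbb{D}}f^{n}\overline{f^{m}}\,dm=(f^{n-m})'(0)=f'(0)^{\,n-m}$; hence, with $\lambda=f'(0)$, the Gram matrix $(\langle f^{n},f^{m}\rangle)$ is a Hermitian Toeplitz matrix whose symbol is the Poisson kernel $\theta\mapsto(1-|\lambda|^{2})\,|1-\lambda e^{i\theta}|^{-2}$, and since $|\lambda|<1$ by the Schwarz lemma this symbol is bounded above and bounded away from $0$, which gives the display. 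For $p\ge 2$ the lower bound is then immediate, since $m$ is a probability measure and hence $\|F\|_{p}\ge\|F\|_{2}\ge C(f)^{-1/2}\bigl(\sum|a_{n}|^{2}\bigr)^{1/2}$. For $0<p<2$, fix any $s>2$ and choose $\theta\in(0,1)$ with $\tfrac12=\tfrac{1-\theta}{p}+\tfrac{\theta}{s}$; log-convexity of the norms $\|\cdot\|_{L^{p}(m)}$ gives $\|F\|_{2}\le\|F\|_{p}^{1-\theta}\|F\|_{s}^{\theta}$, and substituting the lower bound just proved for $\|F\|_{2}$ and the upper bound $\|F\|_{s}\le C(s,f)\bigl(\sum|a_{n}|^{2}\bigr)^{1/2}$ from the first paragraph, then solving for $\|F\|_{p}$, yields $\|F\|_{p}\ge C(p,f)^{-1}\bigl(\sum|a_{n}|^{2}\bigr)^{1/2}$.

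I do not expect a real obstacle here; the only points that need a little care are the exact shape of the $\operatorname{BMOA}(\mathbb{D})\hookrightarrow\mathbb{H}^{p}(\mathbb{D})$ embedding (handled by John--Nirenberg, with $F(0)=0$ used to discard the value at the origin) and the elementary Toeplitz computation underlying the two-sided $\mathbb{H}^{2}$ estimate, which may instead simply be quoted from \cite{NS2}.
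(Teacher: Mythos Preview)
Your proposal is correct and follows essentially the same route as the paper: the upper bound via Corollary~\ref{coro1.3} and John--Nirenberg, and the lower bound via the two-sided $\mathbb{H}^2$ estimate (the paper's Lemma~\ref{lem2.3}, identical to your Toeplitz computation) combined with H\"older/log-convexity to pass from $p=2$ to general $p$. The only cosmetic difference is that the paper interpolates using the conjugate exponent $q$ with $p^{-1}+q^{-1}=1$, whereas you pick an arbitrary $s>2$; your formulation is in fact slightly cleaner for $0<p\le 1$.
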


By Theorem~\ref{theo1.1}, if $\sum |a_{n}|^{2}<\infty$, the series~$\sum a_{n}f^{n}(\xi)$ converges at almost every $\xi\in\partial\mathbb{D}$, while if $\sum |a_{n}|^{2}=\infty$, the series~$\sum a_{n}f^{n}(\xi)$ converges at almost no point~$\xi\in\partial\mathbb{D}$.~If the coefficients satisfy the stronger condition~$\sum |a_{n}|<\infty$, it is clear that $\sum a_{n}f^{n}\in\mathbb{H}^{\infty}(\mathbb{D})$. Our next result says that this condition is also necessary.

\begin{theorem}\label{theo1.5}
Let $f$ be an inner function with $f(0)=0$ which is not a rotation. Let $\{a_{n}\}$ be a sequence of complex numbers with $\sum\limits_{n=1}^\infty |a_{n}|^{2}<\infty$ and $F=\sum\limits_{n=1}^\infty a_{n}f^{n}$. Assume there exists an arc~$I\subset \partial\mathbb{D}$ such that $\sup\{|F(\xi)|:\xi\in I\}<\infty$. Then $\sum\limits_{n=1}^\infty |a_{n}|<\infty$.
\end{theorem}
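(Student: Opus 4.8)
The plan is to argue by contradiction: assume $\sum|a_n|=\infty$ while $F$ stays bounded on some arc $I\subset\partial\mathbb{D}$, and derive a contradiction using the self-mapping dynamics of $f$ on $\partial\mathbb{D}$. The starting observation is that since $f(0)=0$ and $f$ is not a rotation, the Denjoy--Wolff Theorem (together with Pommerenke's exponential decay, already invoked for Theorem~\ref{theo1.2}) tells us that $f^n\to 0$ uniformly on compact subsets of $\mathbb{D}$; in particular, for any fixed point $z_0\in\mathbb{D}$ we have $f^n(z_0)\to 0$. More importantly, $f$ acts ergodically on $\partial\mathbb{D}$ with respect to Lebesgue measure (this is classical for inner functions fixing $0$ that are not rotations), so orbits $\{f^n(\xi)\}$ equidistribute in a suitable sense; this is the mechanism that should let us ``transfer'' information from the small arc $I$ to the whole circle.

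The key step is a \emph{self-improvement/spreading} argument. First I would show that boundedness of $F$ on $I$ forces a quantitative decay on the tail coefficients restricted to $I$: writing $F=\sum_{n\le m}a_nf^n + \sum_{n>m}a_nf^n =: F_m + R_m$, the partial sum $F_m$ is bounded on all of $\mathbb{D}$, so $R_m$ is bounded on $I$ with a bound independent of $m$. Now apply the $\operatorname{BMOA}$ estimate of Theorem~\ref{theo1.2} to $R_m$: its $\operatorname{BMOA}$ norm is controlled by $\bigl(\sum_{n>m}|a_n|^2\bigr)^{1/2}$, which is small. A function that is uniformly bounded on an arc $I$ and has small $\operatorname{BMOA}$ norm cannot oscillate much; by the John--Nirenberg inequality and a conformal/harmonic measure estimate one deduces that $R_m$ is in fact bounded on a fixed smaller arc $I'\Subset I$ by a constant $c$ that can be made small once $m$ is large. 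Next use ergodicity: for a.e.\ $\xi\in\partial\mathbb{D}$ there are infinitely many $k$ with $f^k(\xi)\in I'$; combining the chain rule $f^{k+n}=f^n\circ f^k$ with the bound $|R_m(f^k(\xi))|\le c$ for such $k$, one obtains that the \emph{shifted} tail series $\sum_{j\ge 1}a_{m+j}f^j$ evaluated at $f^k(\xi)\in I'$ is small --- i.e.\ the tail, as a function, is small on $I'$ itself, not just along the orbit.

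The final step is to convert the uniform smallness of the tail on a fixed arc $I'$ into summability of $|a_n|$. Here I would test against the reproducing behaviour of the Poisson kernel or, more directly, exploit that $f^n$ near a point $\xi_0\in I'$ where all iterates are well-behaved (a Lebesgue point of the relevant boundary data) gives a kind of ``almost orthogonality with a lower bound'': because the iterates $f^n$ behave like independent random variables (the heuristic the paper advertises), the sup of $|\sum_{n>m} a_n f^n|$ on $I'$ should dominate something like $\sum_{n>m}|a_n|$ up to a constant — concretely, one can use that $\mathbb{E}[\,\overline{\varepsilon_n}\,\sum a_k\varepsilon_k\,]=a_n$ for an orthonormal-like system, and a Khintchine-type lower bound, to extract $\sum_{n>m}|a_n|\le C\sup_{I'}|R_m|$. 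Since the right-hand side tends to $0$ as $m\to\infty$ while the left-hand side is the tail of a divergent series, we reach a contradiction, proving $\sum|a_n|<\infty$.

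I expect the main obstacle to be the second step: making rigorous the passage from ``$R_m$ bounded on $I$ with small $\operatorname{BMOA}$ norm'' to ``$R_m$ uniformly small on a fixed subarc,'' and then the ergodic transfer that upgrades orbit-wise smallness to arc-wise smallness of the shifted tail. The ergodicity of $f$ on $\partial\mathbb{D}$ and control of how the arcs $f^{-k}(I')$ distribute (via the fact that $f$ preserves a measure equivalent to $m$ when $f$ is inner with $f(0)=0$, by the Löwner-type identity) will be the technical heart; once the tail is known to be uniformly small on a fixed arc, the extraction of $\sum|a_n|$ should follow from the already-established quasi-orthogonality estimates underlying Theorem~\ref{theo1.2} and Corollary~\ref{coro1.4}.
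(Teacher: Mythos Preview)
Your proposal has two genuine gaps, and the second is fatal. First, the claim that ``$R_m$ bounded on $I$ with small $\operatorname{BMOA}$ norm'' forces $R_m$ to be \emph{uniformly small} on a subarc is false: small $\operatorname{BMO}$ norm only says $R_m$ is close to its \emph{mean} on $I$, and that mean can be large (it need not be close to $R_m(0)=0$, since the Poisson kernel at points of $I$ is far from uniform). So the John--Nirenberg step does not give $\sup_{I'}|R_m|\to 0$. Your ergodic transfer is also garbled: $R_m(f^k(\xi))=\sum_{n>m}a_n f^{n+k}(\xi)$ is \emph{not} the shifted tail $\sum_{j\ge1}a_{m+j}f^j$ evaluated at $f^k(\xi)$ unless $m=0$, so the chain-rule identity does not convert orbit information into arc-wise smallness of any useful auxiliary series.

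The decisive gap is your final step: you assert that ``the sup of $|\sum_{n>m}a_nf^n|$ on $I'$ should dominate something like $\sum_{n>m}|a_n|$'' via a Khintchine-type bound. But Khintchine and the quasi-orthogonality underlying Theorem~\ref{theo1.2} and Corollary~\ref{coro1.4} yield only the $\ell^2$ quantity $(\sum|a_n|^2)^{1/2}$, never $\sum|a_n|$; passing from $\ell^2$ to $\ell^1$ is precisely the content of the theorem, so this step is circular. The paper's proof addresses exactly this point with substantial additional machinery: it splits $F$ into \emph{long} and \emph{short} blocks (Weiss's trick) so that each long block has a bounded number of terms, whence its $\ell^2$ and $\ell^1$ coefficient norms are comparable; then, via an inductive application of a lower-bound oscillation lemma (Lemmas~\ref{lem3.1}--\ref{lem3.7}), it constructs nested arcs $I_k\subset I$ on which the \emph{mean} of $\operatorname{Re}F$ satisfies $\frac{1}{m(I_k)}\int_{I_k}\operatorname{Re}F\,dm\ge A\sum_{n\le M_k}|a_n|-B$ with $M_k\to\infty$. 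This constructive lower bound, not any spreading or ergodic argument, is what forces $\sum|a_n|<\infty$.
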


The proof of Theorem~\ref{theo1.5} is the most technical part of the paper.~The main idea is to construct by induction a sequence of arcs~$I_{k}\subset I$ and a sequence~$M_{k}$ of positive integers tending to infinity such that
\begin{equation}\label{eq1.2}
\frac{1}{m(I_{k})} \int_{I_{k}}\operatorname{Re}F\,dm\ge A\sum^{M_{k}}_{j=1}|a_{j}|-B,\quad k=1,2,\dotsc,
\end{equation}
where $A$ and $B$ are positive constants independent of~$k$.~We start showing that if $|f(z)|$ is sufficiently small, the two terms in the conclusion of Theorem~\ref{theo1.2} are comparable. This provides a way of finding arcs~$I_{k}\subset I$ and of splitting $F=\sum\limits_{j}F_{j}$ into blocks~$F_{k}$ such that
$$
\frac{1}{m(I_{k})} \int_{I_{k}}\sum_{j=1}^{k}
\operatorname{Re}F_{j}\,dm\ge C\sum^{k}_{j=1}\|\operatorname{Re}F_{j}\|_{2},
\quad k=1,2,\dotsc
$$
Since the blocks~$F_{j}$ have a uniformly bounded number of terms, $\|\operatorname{Re}F_{j}\|_{2}$ can be estimated from below by the sum of the modulus of the corresponding coefficients and \eqref{eq1.2} follows easily.

Let $\overline{\mathbb{H}^{\infty}(\mathbb{D})}$ denote the closure of $\mathbb{H}^{\infty}(\mathbb{D})$ in $\operatorname{BMOA}(\mathbb{D})$. This space was studied in~\cite{GJ}. See also~\cite{NS1} and~\cite{SS}. Theorems~\ref{theo1.1}, \ref{theo1.5} and Corollary~\ref{coro1.3} lead to the following result.

\begin{corollary}\label{coro1.6}
Let $f$ be an inner function with $f(0)=0$ which is not a rotation. Let $\{a_{n}\}$ be a sequence of complex numbers with $\sum\limits_{n=1}^\infty |a_{n}|^{2}<\infty$ but $\sum\limits_{n=1}^\infty |a_{n}|=\infty$. Then $F=\sum\limits^{\infty}_{n=1}a_{n}f^{n}\in \overline{\mathbb{H}^{\infty}(\mathbb{D})}$. Moreover 
$$
F(\xi)=\lim_{r\to 1}F(r\xi)=\sum^{\infty}_{n=1}a_{n}f^{n}(\xi)
$$
at almost every $\xi\in \partial\mathbb{D}$ but $\sup\{|F(\xi)|:\xi\in I\}=\infty$ for any arc~$I\subset \partial\mathbb{D}$.
\end{corollary}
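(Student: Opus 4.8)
The plan is to deduce Corollary~\ref{coro1.6} by assembling the three ingredients already at hand: Corollary~\ref{coro1.3}, Theorem~\ref{theo1.1}, and Theorem~\ref{theo1.5}. Since $\sum |a_n|^2 < \infty$, part~(a) of Corollary~\ref{coro1.3} gives immediately that $F = \sum a_n f^n \in \operatorname{BMOA}(\mathbb{D})$ and that, at almost every $\xi \in \partial\mathbb{D}$, the radial limit $\lim_{r\to 1} F(r\xi)$ exists and equals $\sum a_n f^n(\xi)$. (The almost everywhere convergence of the series itself is also reaffirmed by Theorem~\ref{theo1.1}, since $\sum|a_n|^2<\infty$.) This disposes of the displayed identity in the statement.

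Next I would show $F \in \overline{\mathbb{H}^\infty(\mathbb{D})}$. The natural candidates for the approximating bounded functions are the partial sums $S_N = \sum_{n=1}^N a_n f^n$, each of which is a finite linear combination of inner functions and hence lies in $\mathbb{H}^\infty(\mathbb{D})$. It remains to verify that $\|F - S_N\|_{\operatorname{BMOA}(\mathbb{D})} \to 0$. But $F - S_N = \sum_{n > N} a_n f^n$ is precisely a series of the same type, with coefficient sequence the tail $\{a_n\}_{n>N}$, so the upper bound in Corollary~\ref{coro1.3}(a) applies and gives $\|F - S_N\|_{\operatorname{BMOA}(\mathbb{D})}^2 \le C \sum_{n>N} |a_n|^2$, which tends to $0$ since $\sum |a_n|^2 < \infty$. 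Hence $F$ is a $\operatorname{BMOA}$-limit of $\mathbb{H}^\infty$ functions, i.e. $F \in \overline{\mathbb{H}^\infty(\mathbb{D})}$.

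Finally, for the local unboundedness: suppose toward a contradiction that there were an arc $I \subset \partial\mathbb{D}$ with $\sup\{|F(\xi)| : \xi \in I\} < \infty$. Then Theorem~\ref{theo1.5} (whose hypotheses $\sum|a_n|^2 < \infty$ and the existence of such an $I$ are exactly met) would force $\sum |a_n| < \infty$, contradicting our assumption $\sum |a_n| = \infty$. Therefore $\sup\{|F(\xi)| : \xi \in I\} = \infty$ for every arc $I \subset \partial\mathbb{D}$.

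I do not expect any genuine obstacle here: the corollary is a straightforward repackaging of the three preceding results, and the only point requiring a moment's care is the observation that the $\operatorname{BMOA}$ estimate of Corollary~\ref{coro1.3}(a) applies verbatim to the tails $\sum_{n>N} a_n f^n$ (since these are again linear combinations of iterates of $f$ with $\ell^2$ coefficients), which is what makes the partial sums converge to $F$ in $\operatorname{BMOA}$ norm. If one wished to be even more careful about membership in the closure, one could instead approximate $F$ by the bounded functions $S_N$ directly and invoke the fact that $\overline{\mathbb{H}^\infty(\mathbb{D})}$ is by definition closed in $\operatorname{BMOA}(\mathbb{D})$; either way the argument is short.
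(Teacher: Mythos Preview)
Your proposal is correct and follows exactly the route the paper intends: the paper itself gives no explicit proof of Corollary~\ref{coro1.6}, stating only that it follows from Theorems~\ref{theo1.1}, \ref{theo1.5} and Corollary~\ref{coro1.3}, and you have supplied precisely those deductions, including the key observation that the tail estimate from Corollary~\ref{coro1.3}(a) yields $\|F-S_N\|_{\operatorname{BMOA}}\to 0$ with $S_N\in\mathbb{H}^\infty(\mathbb{D})$.
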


Theorems~\ref{theo1.1} and \ref{theo1.5} and Corollaries~\ref{coro1.3} and \ref{coro1.4} can be understood as analogues to classical results in the theory of lacunary series (see Section 6 of Chapters V and VI of~\cite{Zy}). However it should be noted that no lacunarity assumption is needed in our results.

The paper is organized as follows. Next section is devoted to the proof of Theorems~\ref{theo1.1}, \ref{theo1.2} and Corollaries~\ref{coro1.3} and \ref{coro1.4}. In Section~\ref{sec3} we start collecting several auxiliary results which are used in the proof of Theorem~\ref{theo1.5}. In Section~\ref{sec4} we describe in terms of the coefficients, those linear combinations of iterates which belong to certain classical function spaces, such as the disc algebra, the Dirichlet space or the Bloch space. Finally last section is devoted to state some related open questions we have not explored. 

It is a pleasure to thank Od\'i Soler i Gibert for some very helpful comments. 

\section{BMO estimates and pointwise convergence}\label{sec2}

Let $\rho(z,w)$ denote the pseudohyperbolic distance between the points~$z,w\in\mathbb{D}$ defined as $\rho(z,w)=|z-w||1-\overline wz|^{-1}$. Schwarz's Lemma asserts that any analytic selfmapping~$f$ of~$\mathbb{D}$ contracts hyperbolic distances, that is, $\rho(f(z),f(w))\le \rho(z,w)$ for any $z,w\in\mathbb{D}$. Equivalently ${D}_{h}(f)(z)\le 1$, $z\in \mathbb{D}$, where
\begin{equation}\label{eq2.1}
D_{h}(f)(z)=\frac{(1-|z|^{2})|f'(z)|}{1-|f(z)|^{2}},\quad z\in \mathbb{D},
\end{equation}
is the hyperbolic derivative of~$f$ at the point~$z$. Our first auxiliary result is a quantitative version of the Denjoy--Wolff Theorem which is essentially due to Pommerenke (\cite{Po}). Its short proof is included for the sake of completeness.

\begin{lemma}\label{lem2.1}
Let $f\in H^{\infty}(\mathbb{D})$, $\|f\|_{\infty}\le 1$ with $f(0)=0$ which is not a rotation.
\begin{enumerate}
\item[(a)] Then, for any $0<r<1$, there exists $0<c=c(r,f)<1$ such that $1-|z|\le c(1-|f(z)|)$ if $|z|\ge r$.

\item[(b)] Assume $f'(0) \neq 0$. Then there exists $0<r_{0}=r_{0}(f)<1$ such that
$$
|f^{n}(z)|\le r_{0}^{-1} |f'(0)|^{n}|z|,\quad n\ge 1,\quad\text{if}\quad |z|\le r_{0}.
$$

\item[(c)] Assume $f'(0)=0$. Then $|f^{n}(z)|\le |z|^{2^{n}}$, $z\in \mathbb{D}$.
\end{enumerate}
\end{lemma}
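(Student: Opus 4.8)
The plan is to prove the three parts of Lemma~\ref{lem2.1} in order, using Schwarz's Lemma and the Schwarz--Pick inequality for the hyperbolic derivative as the main tools.

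\medskip

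\textbf{Part (c).} I would start with this since it is the cleanest. If $f'(0)=0$ and $f(0)=0$, then $f(z)/z^{2}$ is analytic in $\mathbb{D}$ (the zero at the origin has order at least $2$), and since $|f(z)|\le 1$, a normal-families/maximum-principle argument on $f(z)/z^{2}$ on the disc $|z|\le r$ and letting $r\to 1$ gives $|f(z)|\le |z|^{2}$. Iterating, $|f^{n}(z)|\le |f^{n-1}(z)|^{2}\le\dots\le |z|^{2^{n}}$, which is the claim.

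\medskip

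\textbf{Part (a).} The key point is that $f$ is not a rotation, so by Schwarz's Lemma $|f(w)|<|w|$ for every $w\in\mathbb{D}\setminus\{0\}$, and in particular on the circle $|w|=r$ the continuous function $w\mapsto |f(w)|/|w|$ attains a maximum $\lambda=\lambda(r,f)<1$. By the maximum principle applied to $f(z)/z$ on $r\le |z|<1$, we get $|f(z)|\le\lambda|z|$ for all $|z|\ge r$ (the function $f(z)/z$ is analytic and bounded there, and on the inner boundary $|z|=r$ it is bounded by $\lambda$, while near $|z|=1$ its modulus is at most $1/r$, so a refinement or a direct two-constants argument is needed; more simply, $|f(z)|\le|z|$ everywhere already, and on $|z|=r$ we have the improvement, which propagates outward by subharmonicity of $\log|f(z)/z|$). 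From $|f(z)|\le\lambda|z|$ one deduces $1-|f(z)|\ge 1-\lambda|z|\ge (1-\lambda)\ge (1-\lambda)(1-|z|)/(1-r)\cdot$\,(something) --- more directly, $1-|f(z)|\ge 1-\lambda|z| = (1-|z|)+( 1-\lambda)|z| \ge (1-|z|) + (1-\lambda)r$ when $|z|\ge r$, and combining with $1-|z|\le 2$ gives $1-|z|\le c(1-|f(z)|)$ with $c = c(r,f)<1$; the cleanest choice is to note $1-|f(z)| \ge 1-\lambda|z|$ and compare the two affine functions of $|z|$ on $[r,1]$, whose ratio is maximized at an endpoint, yielding an explicit $c<1$.

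\medskip

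\textbf{Part (b).} Assume $f'(0)\neq 0$. Since $f(0)=0$, write $f(z)=z g(z)$ with $g$ analytic, $g(0)=f'(0)$, $|g(z)|\le 1$ everywhere (Schwarz), and in fact $|g(z)|<1$ for $z\ne 0$ small; the idea is to choose $r_{0}$ so small that $|g(z)|\le |f'(0)|^{1/2}\cdot$\,(some controlled factor), but more robustly one iterates the estimate $|f(z)|\le |z|\,\sup_{|w|\le|z|}|g(w)|$. Pick $r_{0}$ with $\sup_{|z|\le r_{0}}|g(z)| =: \mu < 1$; note $\mu\to |f'(0)|$ as $r_{0}\to 0$ by continuity, so for $r_0$ small we have $\mu < 1$ and, since $f^n(z)$ stays in $\{|z|\le r_0\}$ whenever $|z|\le r_0$ (as $|f(z)|\le\mu|z|<r_0$), we get $|f^{n}(z)|\le \mu^{n}|z|$ for $|z|\le r_{0}$. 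To get exactly the form $r_{0}^{-1}|f'(0)|^{n}|z|$ one improves this: for fixed $\varepsilon>0$ choose $r_0$ so small that $\mu \le |f'(0)| + \varepsilon$; but cleaner is to observe that since $g(0)=f'(0)$, for any $q$ with $|f'(0)|<q<1$ there is $r_0=r_0(q)$ with $\sup_{|z|\le r_0}|g(z)|\le q$, giving $|f^n(z)|\le q^n|z|$. One then absorbs the discrepancy between $q^n$ and $|f'(0)|^n$: iterating the sharper local bound $|f(z)| \le |f'(0)||z|(1 + O(|z|))$ and using that the orbit shrinks geometrically so the product $\prod (1+O(|f^k(z)|))$ converges, bounded by a constant $1/r_0$ after possibly shrinking $r_0$. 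I would write this as: $|f^{n}(z)| = |z|\prod_{k=0}^{n-1} |g(f^{k}(z))|$, and since $|g(w)| \le |f'(0)|\exp(c|w|)$ for $|w|\le r_0$ (from $g(w)/|f'(0)|$ having the right value and derivative bound at $0$), we get $|f^{n}(z)| \le |f'(0)|^{n}|z|\exp\bigl(c\sum_{k\ge 0}|f^{k}(z)|\bigr) \le |f'(0)|^{n}|z|\exp(c r_0/(1-\mu))$, and choosing $r_0$ small makes the exponential at most $r_0^{-1}$.

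\medskip

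\textbf{Main obstacle.} The routine parts are (c) and the qualitative shrinking in (a) and (b); the genuinely delicate point is obtaining in part (b) the \emph{precise} constant $|f'(0)|^{n}$ (not merely $q^{n}$ for $q>|f'(0)|$) with a multiplicative loss of the explicit shape $r_{0}^{-1}$. This forces one to track the orbit product $\prod_k |g(f^k(z))|$ and to exploit the geometric decay of the orbit to sum the first-order corrections; keeping the bookkeeping honest so that the accumulated factor is genuinely $\le r_0^{-1}$ after shrinking $r_0$ is where the care is needed. Since this is the quantitative Denjoy--Wolff estimate of Pommerenke, I would cite \cite{Po} for the mechanism and present the short self-contained computation along the lines above.
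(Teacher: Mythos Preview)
Your argument for part~(a) contains a genuine error. You claim that the bound $|f(z)|\le\lambda|z|$ with a fixed $\lambda=\lambda(r,f)<1$ holds for all $|z|\ge r$, and you try to justify this by the maximum principle or by subharmonicity of $\log|f(z)/z|$ on the annulus $r\le|z|<1$. This is false in general: take $f(z)=z^{2}$, which satisfies all the hypotheses, yet $|f(z)|/|z|=|z|\to 1$ as $|z|\to 1$. The maximum principle on the annulus fails precisely because the outer boundary contributes; for an inner function the radial limits of $|f(z)/z|$ are $1$ almost everywhere on $\partial\mathbb{D}$, so no uniform $\lambda<1$ can survive. The paper instead applies Schwarz's Lemma to $h(z)=f(z)/z$ (which maps $\mathbb{D}$ into $\mathbb{D}$ with $h(0)=f'(0)$) in the form $\rho(f(z)/z,f'(0))\le|z|$, yielding the sharp pointwise bound $|f(z)|\le\psi(|z|)$ with $\psi(t)=t(t+|f'(0)|)/(1+|f'(0)|t)$. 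A direct computation then gives
\[
1-|f(z)|\ge 1-\psi(|z|)=(1-|z|)\,\frac{1+|z|}{1+|f'(0)|\,|z|},
\]
and the last factor is at least $(1+r)/(1+|f'(0)|r)>1$ when $|z|\ge r$, which is exactly the statement with $c=(1+|f'(0)|r)/(1+r)$. The point is that one needs an inequality comparing $1-|f(z)|$ and $1-|z|$ that remains nontrivial as $|z|\to 1$, and this comes from the \emph{shape} of $\psi$ near $1$, not from a multiplicative improvement $|f(z)|\le\lambda|z|$.

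Your approach to part~(b) is correct and genuinely different from the paper's. The paper uses the K\"onigs function of the extremal map $\psi$: the normalized iterates $g_{n}=\psi^{n}/|f'(0)|^{n}$ increase to the K\"onigs function $g$, Koebe's distortion theorem gives $|g(w)|\le r_{0}^{-1}|w|$ on a small disc, and then $|f^{n}(z)|\le\psi^{n}(|z|)=|f'(0)|^{n}g_{n}(|z|)\le|f'(0)|^{n}g(|z|)$. Your route---writing $|f^{n}(z)|=|z|\prod_{k=0}^{n-1}|g(f^{k}(z))|$ with $g=f/z$, bounding $|g(w)|\le|f'(0)|e^{c|w|}$ near $0$, and summing the geometrically decaying orbit---is more elementary and avoids the K\"onigs machinery and Koebe distortion; the paper's route, on the other hand, ties in with the classical linearization theory and gives the constant in a conceptually clean way. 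Part~(c) is fine and matches the paper.
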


\begin{proof}
Since $f(0)=0$, Schwarz's Lemma gives
$$
\rho\left(\frac{f(z)}{z},f'(0)\right)\le |z|,\quad z\in \mathbb{D}.
$$
Denote $\psi(z)=z(z+|f'(0)|)(1+|f'(0)|z)^{-1}$, $z\in \mathbb{D}$, to obtain
\begin{equation}\label{eq2.2}
|f(z)|\le \psi (|z|),\quad z\in \mathbb{D}.
\end{equation}
Then 
$$
1-|f(z)|\ge 1-\psi (|z|)=(1-|z|)\frac{1+|z|}{1+|f'(0)||z|}.
$$
Given $0<r<1$, taking  $c=(1+r)^{-1}(1+|f'(0)|r)$ the estimate in (a) follows.

We now prove (b). For $n=1,2,\dotsc$, consider the function~$g_{n}(z)=\psi^{n}(z)|f'(0)|^{-n}$. It is known that $\{g_{n}\}$ converges uniformly on compact sets of~$\mathbb{D}$ to an analytic function~$g$ on~$\mathbb{D}$, known as the K\"onigs function of~$\psi$, which satisfies $g(\psi(z))=|f'(0)|g(z)$, $z\in \mathbb{D}$. See \cite[pp.~89--93]{Sh}. Moreover for $0\le x\le 1$ we have
$$
g_{n+1}(x)=\frac{\psi^{n+1}(x)}{|f'(0)|^{n+1}} =g_{n}(x) \frac{1+|f'(0)|^{n-1}g_{n}(x)} {1+|f'(0)|^{n+1}g_{n}(x)}\ge g_{n}(x).
$$
Hence $g_{n}(x)\le g(x)$, $n=1,2,\dotsc,0<x<1$. Note that there exists a constant $\delta=\delta (|f'(0)|)>0$ such that $\psi$ is univalent in $\{z\in \mathbb{D}:|z|<\delta\}$. Hence $g_{n}$ is also univalent in $\{z\in \mathbb{D}: |z|<\delta\}$. By Koebe Distortion Theorem, there exists $0<r_{0}=r_{0}(f)<1$ such that $|g(w)|<1$ if $|w|<r_{0}$. By Schwarz's Lemma we deduce $|g(w)| \leq r_{0}^{-1}|w|$ if $|w|<r_{0}$. Applying \eqref{eq2.2}, for any $|z| < r_{0}$ we have
$$
|f^{n}(z)|\le \psi^{n} (|z|)\le |f'(0)|^{n} g(|z|)\le r_{0}^{-1} |f'(0)|^{n}|z|,\quad n=1,2,\dotsc
$$
This proves (b). Assume now $f'(0)=0$. Note that \eqref{eq2.2} gives $|f(z)|\le |z|^{2}$, $z\in \mathbb{D}$. Iterating we obtain (c).
\end{proof}

For future reference we state the following easy consequence of Lemma~\ref{lem2.1}.

\begin{corollary}\label{coro2.2}
Let $f\in \mathbb{H}^{\infty}(\mathbb{D})$, $\|f\|_{\infty}\le 1$ with $f(0)=0$ which is not a rotation. Then there exist constants $0<r_{0}=r_{0}(f)<1$ and $0<c_{0}=c_{0}(f)<1$ such that
$$
|f^{n}(z)|\le r_{0}^{-1} c_{0}^{n}|z|\quad \text{if}\quad |z|\le r_{0}.
$$
\end{corollary}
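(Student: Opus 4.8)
The plan is to deduce this directly from Lemma~\ref{lem2.1}, distinguishing the two cases according to whether $f'(0)$ vanishes.

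First I would treat the case $f'(0)\neq 0$. Since $f(0)=0$ and $f$ is not a rotation, Schwarz's Lemma gives $|f'(0)|<1$, and combined with the assumption $f'(0)\neq 0$ we get $0<|f'(0)|<1$. Then I would simply set $c_{0}=|f'(0)|\in(0,1)$ and let $r_{0}=r_{0}(f)$ be the constant furnished by Lemma~\ref{lem2.1}(b); the desired estimate $|f^{n}(z)|\le r_{0}^{-1}c_{0}^{n}|z|$ for $|z|\le r_{0}$ is then literally the conclusion of that part of the lemma.

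Next I would treat the case $f'(0)=0$, where Lemma~\ref{lem2.1}(c) gives the super-exponential bound $|f^{n}(z)|\le|z|^{2^{n}}$ on all of $\mathbb{D}$. Here I would fix any $r_{0}\in(0,1)$, say $r_{0}=1/2$, and put $c_{0}=r_{0}^{2}\in(0,1)$. For $|z|\le r_{0}$ one converts the super-exponential decay into geometric decay using the trivial inequality $2^{n}\ge 2n$ and $r_{0}<1$:
$$
|f^{n}(z)|\le|z|^{2^{n}}=|z|\cdot|z|^{2^{n}-1}\le|z|\cdot r_{0}^{\,2^{n}-1}\le|z|\cdot r_{0}^{\,2n-1}=r_{0}^{-1}(r_{0}^{2})^{n}|z|=r_{0}^{-1}c_{0}^{n}|z|,
$$
which is exactly the claimed bound. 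Since $f$ is fixed, exactly one of the two cases occurs, so no further combining of constants is needed.

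There is essentially no obstacle here: the statement is a packaging of Lemma~\ref{lem2.1}(b)--(c) into a uniform geometric-decay form. The only mildly nontrivial point is the last case, where the bound $|z|^{2^{n}}$ must be relaxed to the geometric rate $c_{0}^{n}|z|$; this is immediate from $2^{n}\ge 2n$, at the cost of a worse (but still admissible) constant $c_{0}$.
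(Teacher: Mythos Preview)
Your proof is correct and follows the same approach as the paper: invoke Lemma~\ref{lem2.1}(b) directly when $f'(0)\neq 0$, and in the case $f'(0)=0$ relax the super-exponential bound $|f^n(z)|\le |z|^{2^n}$ to a geometric one by comparing exponents. The paper uses the slightly sharper inequality $2^n-1\ge n$ (yielding $c_0=r_0$) rather than your $2^n\ge 2n$ (yielding $c_0=r_0^2$), but this is immaterial.
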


\begin{proof}
If $f'(0)\ne 0$ let $r_{0}=r_{0}(f)$ be the constant given by part~(b) of Lemma~\ref{lem2.1} and $c_{0}=|f'(0)|$. If $f'(0)=0$, pick any $0<r_{0}<1$. Part~(c) of Lemma~\ref{lem2.1} gives $|f^{n}(z)|\le r_{0}^{2^{n}-1}|z|$, $|z|\le r_{0}$. Since $2^{n}-1\ge n$, $n\ge 1$, we can take $c_{0}=r_{0}$.
\end{proof}

For future reference we also state the following auxiliary result whose proof can be found in \cite[Theorem~9]{NS2}.

\begin{lemma}\label{lem2.3}
Let $f$ be an inner function with $f(0)=0$ which is not a rotation. Let $\{a_{n}\}$ be a sequence of complex numbers. Then
$$
\frac{1-|f'(0)|}{1+|f'(0)|} \sum^{N}_{n=1}|a_{n}|^{2} \le  \left\|\sum^{N}_{n=1}a_{n}f^{n}\right\|_2^2 \le \frac{1+|f'(0)|}{1-|f'(0)|} \sum^{N}_{n=1}|a_{n}|^{2},\quad N=1,2,\dotsc
$$
\end{lemma}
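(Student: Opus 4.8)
The plan is to compute the Gram matrix of the iterates $\{f^{n}\}$ in $\mathbb{H}^{2}(\mathbb{D})$ in closed form and then to recognise the associated quadratic form as an average of $\sum|a_{n}|^{2}$ against a Poisson kernel; the two constants in the statement will then be nothing more than the maximum and minimum values of that kernel. Throughout write $\lambda=f'(0)$. Schwarz's Lemma gives $|\lambda|\le 1$, and since $f$ is not a rotation, $|\lambda|<1$; this strict inequality is the only place the hypothesis enters, and it is what keeps the constants finite.

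The heart of the matter is the identity
$$
\int_{\partial\mathbb{D}}f^{n+k}\,\overline{f^{n}}\,dm=\lambda^{k},\qquad n\ge 1,\ k\ge 0 .
$$
To obtain it, set $h_{k}(w)=f^{k}(w)/w$; since $f^{k}(0)=0$ this is a bounded analytic self-map of $\mathbb{D}$, with $h_{k}(0)=(f^{k})'(0)=\lambda^{k}$ by the chain rule (using $f^{j}(0)=0$ for every $j$). The key observation is the purely algebraic factorisation $f^{n+k}=(h_{k}\circ f^{n})\cdot f^{n}$ on $\mathbb{D}$, which is just $f^{k}(w)=w\,h_{k}(w)$ evaluated at $w=f^{n}(z)$. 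Passing to boundary values (products of $\mathbb{H}^{\infty}$ boundary functions are pointwise products a.e.) and using that the iterate $f^{n}$ is again inner, so $|f^{n}|=1$ a.e.\ on $\partial\mathbb{D}$, the integral equals $\int_{\partial\mathbb{D}}(h_{k}\circ f^{n})\,dm$, which is $(h_{k}\circ f^{n})(0)=h_{k}(0)=\lambda^{k}$ by the mean value property for the bounded analytic function $h_{k}\circ f^{n}$. Taking conjugates gives $\int f^{n}\overline{f^{n+k}}\,dm=\overline{\lambda}^{\,k}$, and the diagonal entries are $1$.

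With these entries the rest is bookkeeping. Put $g(\theta)=\sum_{n=1}^{N}a_{n}e^{in\theta}$, so that $\frac{1}{2\pi}\int_{0}^{2\pi}|g|^{2}\,d\theta=\sum_{n}|a_{n}|^{2}$. Expanding $\bigl\|\sum a_{n}f^{n}\bigr\|_{2}^{2}=\sum_{n,m}a_{n}\overline{a_{m}}\int f^{n}\overline{f^{m}}\,dm$, separating the diagonal and pairing the $(n,m)$ and $(m,n)$ terms, the off-diagonal part is $2\operatorname{Re}\sum_{j\ge 1}\overline{\lambda}^{\,j}\sum_{n}a_{n}\overline{a_{n+j}}$; since $\sum_{n}a_{n}\overline{a_{n+j}}=\frac{1}{2\pi}\int_{0}^{2\pi}|g(\theta)|^{2}e^{ij\theta}\,d\theta$ and $|g|^{2}\ge 0$ is real, the whole expression reassembles into $\frac{1}{2\pi}\int_{0}^{2\pi}P(\theta)|g(\theta)|^{2}\,d\theta$, where
$$
P(\theta)=1+2\operatorname{Re}\sum_{j\ge 1}\overline{\lambda}^{\,j}e^{ij\theta}=\operatorname{Re}\frac{1+\overline{\lambda}e^{i\theta}}{1-\overline{\lambda}e^{i\theta}}=\frac{1-|\lambda|^{2}}{|1-\overline{\lambda}e^{i\theta}|^{2}},
$$
the series converging because $|\overline{\lambda}e^{i\theta}|=|\lambda|<1$. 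This $P$ is a Poisson kernel, so $\tfrac{1-|\lambda|}{1+|\lambda|}\le P(\theta)\le\tfrac{1+|\lambda|}{1-|\lambda|}$ for every $\theta$; multiplying by $|g(\theta)|^{2}\ge 0$, integrating and dividing by $2\pi$ gives both inequalities of the lemma at once, with the stated constants since $\lambda=f'(0)$.

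I do not expect a serious obstacle: the argument is soft. The only things to be careful about are the boundary-value manipulations in the middle step — identifying $\int f^{n+k}\overline{f^{n}}\,dm$ with $(h_{k}\circ f^{n})(0)$ rests on the standard facts that iterates of inner functions are inner, that $\mathbb{H}^{\infty}$ boundary functions multiply pointwise a.e., and that $\mathbb{H}^{1}$ functions enjoy the mean value property. The genuine content is the factorisation $f^{n+k}=(h_{k}\circ f^{n})\,f^{n}$ together with the realisation that the resulting Gram matrix is the Toeplitz matrix of a Poisson kernel; once this is seen, the sharp constants $\frac{1\pm|f'(0)|}{1\mp|f'(0)|}$ fall out with no effort. (Alternatively one could bound the off-diagonal part $2\operatorname{Re}\sum_{n<m}a_{n}\overline{a_{m}}\overline{\lambda}^{\,m-n}$ by Cauchy--Schwarz and a geometric series, but that route yields non-sharp constants.)
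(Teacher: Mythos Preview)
Your proof is correct and follows essentially the same approach the paper relies on: the key identity $\int_{\partial\mathbb{D}} f^{n+k}\overline{f^{n}}\,dm=f'(0)^{k}$ is exactly the expansion recorded as~\eqref{eq3.9}, and the resulting Toeplitz quadratic form with Poisson-kernel symbol is the same mechanism used in the proof of Theorem~\ref{theo4.2} (the lemma itself is cited from~\cite{NS2}). Your trick of writing $\|\sum a_n f^n\|_2^2$ as $\frac{1}{2\pi}\int P(\theta)\,|g(\theta)|^2\,d\theta$ with $g(\theta)=\sum a_n e^{in\theta}$ is a particularly clean way to read off the sharp constants.
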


We are now ready to prove Theorem~\ref{theo1.2}.

\begin{proof}[Proof of Theorem~\ref{theo1.2}]
Without loss of generality we can assume $f^{n}(z)\ne 0$ for any $n\ge 1$. Denote $F=\sum\limits^{N}_{n=1}a_{n}f^{n}$. Then
$$
\int_{\partial \mathbb{D}}|F(\xi)-F(z)|^{2}P(z,\xi)\,dm (\xi)=\sum^{N}_{n=1}|a_{n}|^{2}c_{n,n}+2\operatorname{Re}\sum^{N-1}_{n=1}\sum^{N}_{k>n}\overline{a}_{n}a_{k}c_{k,n},
$$
where
$$
c_{k,n}=\int_{\partial \mathbb{D}}\overline{(f^{n}(\xi)-f^{n}(z))} (f^{k}(\xi)-f^{k}(z))P(z,\xi)\,dm(\xi),\quad k,n=1,\dotsc,N.
$$
For $k\ge n$, the function~$\overline{f^{n}(\xi)} f^{k}(\xi)=f^{k}(\xi)/f^{n}(\xi)$, $\xi\in\partial \mathbb{D}$, has an analytic extension to~$\mathbb{D}$. Then
$$
c_{k,n}=\frac{f^{k}(z)}{f^{n}(z)}-f^{k}(z) \overline{f^{n}(z)}=\frac{f^{k}(z)(1-|f^{n}(z)|^{2})}{f^{n}(z)},\quad k\ge n.
$$
Hence
\begin{equation}\label{eq2.3}
\int_{\partial \mathbb{D}}|F(\xi)-F(z)|^{2}
P(z,\xi)=A+2\operatorname{Re}B,
\end{equation}
where
\begin{align*}
A&=\sum^{N}_{n=1} |a_{n}|^{2} (1-|f^{n}(z)|^{2}),\\*[5pt]
B&=\sum^{N-1}_{n=1}\sum^{N}_{k>n}\overline{a}_{n} a_{k}\frac{f^{k}(z)(1-|f^{n}(z)|^{2})}{f^{n}(z)}.
\end{align*}
The idea is that the cross term~$B$ can be estimated by the diagonal term~$A$. Let $0<r_{0}=r_{0}(f)<1$ and $0<c_{0}=c_{0}(f)<1$ be the constants appearing in Corollary~\ref{coro2.2}. The Denjoy-Wolff Theorem says that the iterates $f^n$ converge to $0$ uniformly on compacts of $\mathbb{D}$. So we can consider the smallest positive integer $\ell=\ell(z)$ such that $|f^{\ell}(z)|\le r_{0}$. Corollary~\ref{coro2.2} gives that $|f^{k}(z)|\le r_{0}^{-1} c_{0}^{k-n} |f^{n}(z)|$ if $k\ge n >\ell$. Assume $\ell <N$. We have
$$
\sum^{N-1}_{n\ge \ell}|a_{n}|\frac{1-|f^{n}(z)|^{2}}{|f^{n}(z)|} \sum^{N}_{k>n} |a_{k}| |f^{k}(z)|\le r_{0}^{-1} \sum^{N-1}_{n\ge \ell}|a_{n}| \sum^{N}_{k>n}|a_{k}|c_{0}^{k-n}.
$$
Writting $j=k-n$ and applying Cauchy--Schwarz's inequality, last double sum can be bounded by
$$
\sum^{N- \ell}_{j=1}c_{0}^{j} \sum^{N-j}_{n\ge \ell} |a_{n}| |a_{n+j}| \le (1-c_{0})^{-1} \sum^{N}_{n\ge \ell}|a_{n}|^{2}.
$$
Since $|f^{n}(z)|\le |f^{\ell}(z)|\le r_{0}$ for $n\ge \ell$, last expression above is bounded by
$$
(1-r_{0}^{2})^{-1} (1-c_{0})^{-1}\sum^{N}_{n\ge \ell} |a_{n}|^{2} (1-|f^{n}(z)|^{2}).
$$
Hence we only need to estimate $B_{1}+B_{2}$ where
\begin{align*}
B_{1}&= \sum^{\ell-1}_{n=1} |a_{n}| \frac{1-|f^{n}(z)|^{2}}{|f^{n}(z)|} \sum^{\ell}_{k>n}|a_{k}| |f^{k}(z)|,\\*[5pt]
B_{2}&= \sum^{\ell-1}_{n=1} |a_{n}| \frac{1-|f^{n}(z)|^{2}}{|f^{n}(z)|} \sum^{N}_{k>\ell}|a_{k}| |f^{k}(z)|.
\end{align*}
Since $|f^{k}(z)|\le |f^{n}(z)|$ for $k\ge n$, writting $j=k-n$ we obtain
$$
B_{1}\le \sum^{\ell-1}_{n=1} |a_{n}| (1-|f^{n}(z)|^{2}) \sum^{\ell}_{k>n}|a_{k}| \le
\sum^{\ell}_{j=1}\sum^{\ell-j}_{n=1}|a_{n}| |a_{n+j}| (1-|f^{n}(z)|^{2}).
$$
Since $|f^{n}(z)|\ge r_{0}$ for $n<\ell$, part~(a) of Lemma~\ref{lem2.1} provides a constant $0<c_{1}=c_{1}(f)<1$ such that $1-|f^{n}(z)|\le c_{1}^{j} (1-|f^{n+j}(z)|)$ for any $n<n+j\le \ell$. Hence
$$
B_{1}\le 2\sum^{\ell}_{j=1}c_{1}^{j/2} \sum^{\ell-j}_{n=1} |a_{n}| (1-|f^{n}(z)|)^{1/2} |a_{n+j}|(1-|f^{n+j}(z)|)^{1/2}.
$$
Applying Cauchy--Schwarz's inequality, we deduce
$$
B_{1}\le 2 (1-c_{1}^{1/2})^{-1} \sum^{\ell}_{n=1}|a_{n}|^{2} (1-|f^{n}(z)|).
$$
Finally we estimate $B_{2}$. Note that Corollary~\ref{coro2.2} gives $|f^{k}(z)|\le r_{0}^{-1} c_{0}^{k-\ell}|f^{\ell}(z)|$ if $k\ge \ell$. Since $|f^{n}(z)|\ge |f^{\ell}(z)|$ for $n\le\ell$, we have
$$
B_{2}\le r_{0}^{-1} \sum^{\ell-1}_{n=1}|a_{n}| (1-|f^{n}(z)|^{2})\sum^{N}_{k>\ell }|a_{k}|c_{0}^{k-\ell}.
$$
Cauchy--Schwarz's inequality gives
\begin{equation}\label{eq2.4}
\sum^{N}_{k>\ell}|a_{k}|c_{0}^{k-\ell} \le (1-c_{0}^2)^{-1/2} \left(\sum^{N}_{k>\ell}|a_{k}|^{2}\right)^{1/2}
\end{equation}
and
$$
\sum^{\ell-1}_{n=1}|a_{n}| (1-|f^{n}(z)|)\le \left(\sum^{\ell-1}_{n=1}|a_{n}|^{2} (1-|f^{n}(z)|)\right)^{1/2} \left(\sum^{\ell-1}_{n=1}(1-|f^{n}(z)|)\right)^{1/2}.
$$
Since $1-|f^{n}(z)|\le c_{1}^{\ell-n}$ for $1\le n<\ell$, last sum is bounded by~$(1-c_{1})^{-1}$ and we deduce
\begin{equation}\label{eq2.51}
\sum^{\ell-1}_{n=1}|a_{n}| (1-|f^{n}(z)|)\le (1-c_{1})^{-1/2} \left(\sum^{\ell-1}_{n=1}|a_{n}|^{2} (1-|f^{n}(z)|)\right)^{1/2}.
\end{equation}
Now applying \eqref{eq2.4} and \eqref{eq2.51} we deduce
\begin{equation*}
\begin{split}
B_{2}&\le c_{2} \left(\sum_{n=1}^{\ell-1} |a_{n}|^{2} (1-|f^{n}(z)|^{2})\right)^{1/2}  \left(\sum^{N}_{k>\ell}|a_{k}|^{2}\right)^{1/2}\\*[5pt]
&\le \frac{c_{2}}{2}\left(\sum^{\ell-1}_{n=1}|a_{n}|^{2} (1-|f^{n}(z)|^{2})+\sum^{N}_{k>\ell}|a_{k}|^{2}\right),
\end{split}
\end{equation*}
where $c_{2}=2 r_0^{-1} (1-c_{0}^2)^{-1/2}(1-c_{1})^{-1/2}$. Finally observe that $|f^{k}(z)|\le r_{0}$ for $k>\ell$ to deduce
$$
\sum^{N}_{k>\ell}|a_{k}|^{2}\le (1-r_{0}^{2})^{-1} \sum^{N}_{k>\ell}|a_{k}|^{2} (1-|f^{k}(z)|^{2}).
$$
This finishes the proof if $\ell <N$. Assume now $\ell \geq N$, that is $|f^k (z)| > r_0$ for any $k<N$.  Then we argue as in the estimate of $B_1$ replacing $\ell$ by $N$, to obtain
$$
 \sum^{N-1}_{n=1} |a_{n}| \frac{1-|f^{n}(z)|^{2}}{|f^{n}(z)|} \sum^{N}_{k>n}|a_{k}| |f^{k}(z)|  \leq 2 (1- c_1^{1/2})^{-1} \sum_{n=1}^{N} |a_{n}|^{2} (1-|f^{n}(z)| ).
$$
This finishes the proof. 
\end{proof}

The following well known auxiliary result plays a fundamental role in the classical work of Paley and Zygmund, as well as in the study of pointwise convergence of random series of functions (\cite{Ka}). Its short proof is included for the sake of completeness.

\begin{lemma}[Paley--Zygmund inequality]\label{lem2.4}
Let $(X,\Omega,d\mu)$ be a probability space and $Z\colon X\to [0,\infty)$ be a positive square integrable random variable. Then for any $0<\lambda <1$, we have
$$
\mu\left\{x\in X:Z(x)>\lambda\int_{X}Z\,d\mu\right\}\ge (1-\lambda)^{2} \frac{\left(\displaystyle\int_{X}Z\,d\mu\right)^{2}}{\displaystyle\int_{X}Z^{2}\,d\mu}.
$$
\end{lemma}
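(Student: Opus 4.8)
The plan is to split the integral of $Z$ according to whether $Z$ exceeds the threshold $\lambda\int_X Z\,d\mu$, estimate the small part trivially, and estimate the large part by the Cauchy--Schwarz inequality.

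First I would dispose of the degenerate cases: if $\int_X Z\,d\mu=0$ the right-hand side vanishes and there is nothing to prove, while if $\int_X Z^2\,d\mu=\infty$ the right-hand side is again zero; so we may assume $0<\int_X Z\,d\mu$ and $\int_X Z^2\,d\mu<\infty$, after which every quantity below is finite. Set $E=\int_X Z\,d\mu$ and decompose
$$
E=\int_{\{Z\le \lambda E\}} Z\,d\mu+\int_{\{Z>\lambda E\}} Z\,d\mu.
$$
On the first set the integrand is at most $\lambda E$, and since $\mu$ is a probability measure the first integral is at most $\lambda E$. Therefore
$$
\int_{\{Z>\lambda E\}} Z\,d\mu\ge (1-\lambda)E.
$$

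Next I would apply the Cauchy--Schwarz inequality to the integral over $\{Z>\lambda E\}$, writing the integrand as $Z$ times the indicator of that set, to get
$$
\int_{\{Z>\lambda E\}} Z\,d\mu\le \left(\int_X Z^2\,d\mu\right)^{1/2}\bigl(\mu\{Z>\lambda E\}\bigr)^{1/2}.
$$
Combining the two displays gives $(1-\lambda)E\le \left(\int_X Z^2\,d\mu\right)^{1/2}\bigl(\mu\{Z>\lambda E\}\bigr)^{1/2}$, and squaring and rearranging yields the stated bound.

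There is essentially no genuine obstacle here; the argument is just a partition of the mass of $Z$ followed by one application of Cauchy--Schwarz. The only point requiring a moment of care is the reduction to the non-degenerate case so that the final division by $\int_X Z^2\,d\mu$ is legitimate.
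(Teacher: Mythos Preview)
Your proof is correct and follows essentially the same approach as the paper: split the integral of $Z$ over $\{Z\le\lambda E\}$ and $\{Z>\lambda E\}$, bound the first piece trivially by $\lambda E$, and apply Cauchy--Schwarz to the second. The only cosmetic difference is that the paper normalizes to $\int_X Z\,d\mu=1$ at the outset, whereas you carry the quantity $E$ through and explicitly treat the degenerate cases.
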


\begin{proof}
We can assume $\displaystyle\int_{X}Z\,d\mu=1$. Let $W_{\lambda}$ denote the indicator function of the set~$\{x\in X: Z(x) \leq \lambda\}$. Cauchy--Schwarz's inequality gives
$$
1=\int_{X}ZW_{\lambda}\,d\mu +\int_{X}Z(1-W_{\lambda})\,d\mu\le \lambda +\left( \int_{X}Z^{2}\,d\mu\right)^{1/2} \mu \{x\in X:Z(x)>\lambda\}^{1/2}.
$$
\end{proof}

We are now ready to prove Theorem \ref{theo1.1} stated in the Introduction.

\begin{proof}[Proof of Theorem~\ref{theo1.1}]
It is obvious that (a) implies (b). We start proving that (c) implies (a). By Lemma \ref{lem2.3}, the series $\sum a_{n}f^{n}$ converges in $\mathbb{H}^{2}(\mathbb{D})$. Let $F(z)=\sum\limits_{n=1}^{\infty} a_{n}f^{n}(z)$, $z\in\mathbb{D}$. Hence the non-tangential limit
$$
F(\xi) =\lim_{{z {\to}_{\nless} \xi} }F(z)
$$
exists for almost every $\xi\in\partial\mathbb{D}$. For almost every $\xi\in \partial\mathbb{D}$ we will construct a sequence of points~$\{z_{N}=z_{N}(\xi): N=1,2,\dotsc\}$ tending non-tangentially to~$\xi$, such that for any $\varepsilon>0$ there exists an integer $N_{0}=N_{0}(\varepsilon,\xi)>0$ satisfying 
\begin{equation}\label{eq2.5}
\left|\sum^{N}_{n=1}a_{n} f^{n}(\xi) - F(z_{N})\right|<\varepsilon,\quad N \geq N_{0}.
\end{equation}
It is clear that \eqref{eq2.5} implies the statement in~(a).~By Corollary~\ref{coro2.2}, there exist constants $0<r_{0}=r_{0}(f)<1$, $0<c_{0}=c_{0}(f)<1$ such that
\begin{equation}\label{eq2.6}
|f^{n}(z)|\le r_{0}^{-1}c_{0}^{n} |z|,\quad |z|\le r_{0},\quad n=1,2,\dotsc
\end{equation}
Fix $N\ge 1$.~Write $F_{N}=\sum\limits^{N}_{n=1}a_{n}f^{n}$.~Since $f(0)=0$ and $f^{N}$ is inner, there exists a subset $S=S_{N}\subset\partial\mathbb{D}$ with $m(\partial\mathbb{D}\backslash S)=0$ such that for any $\xi\in S$ there exists $0<r_{N}=r_{N}(\xi)< 1$ such that $|f^{N} (r_{N}\xi)|=r_{0}$. Since $f^{N}$ tends to~$0$ uniformly on compact sets of~$\mathbb{D}$ as~$N\to\infty$, we deduce that $r_{N}\to 1$ as $N\to\infty$. Note also that $|f^{j}(r_{N}\xi)|\ge |f^{N}(r_{N}\xi)|=r_{0}$ for $1\le j\le N$. For $\xi\in S$ consider the arc~$I(N,\xi)=\{z\in\partial\mathbb{D}:|z-\xi|<1-r_{N}\}$. Apply Vitali's Covering Lemma (see~\cite[p.~27]{EG}) to obtain a subcollection~$\{I(N,\xi_{k}):k=1,2,\dotsc\}$ of pairwise disjoint arcs such that $m(\partial\mathbb{D}\backslash\cup 5I(N,\xi_{k}))=0$. Here $5I$ denotes the arc in the unit circle having the same center than $I$ and with $m(5I) = 5 m(I)$. Given $\varepsilon>0$ consider the set
$$
E(N)=\bigcup_{k}\{ \xi\in 5 I(N,\xi_{k}): |F_{N}(\xi)-F_{N}(\xi_{k})|\ge\varepsilon\}
$$
and
$$
E=\bigcap_{\ell\ge 1}\bigcup_{N\ge\ell}E(N).
$$
We first show that $m(E)=0$. By the classical Borel--Cantelli Lemma, it is sufficient to show
\begin{equation}\label{eq2.7}
\sum^{\infty}_{N=1}m(E(N))<\infty.
\end{equation}
Observe that Theorem~\ref{theo1.2} gives that $F\in\operatorname{BMOA}$. Since $P(r_{N}	\xi_{k},\xi)$ is comparable to  $m(I(N,\xi_{k}))^{-1}$ for $\xi\in 5I(N,\xi_{k})$, there exists a constant~$c_{1}=c_{1}(f)>0$ such that for any $k,N\ge 1$, we have
$$
\frac{1}{m(I(N,\xi_{k}))}\int_{5I(N,\xi_{k})} |F_{N}(\xi)-F_{N}(r_{N}\xi_{k})|^{2}\,dm(\xi)\le
c_{1}\sum^{N}_{n=1}|a_{n}|^{2} (1-|f^{n}(r_{N}\xi_{k})|).
$$
By part~(a) of Lemma~\ref{lem2.1} and the choice of~$r_{N}$, there exists a constant $0<c<1$ such that $1-|f^{n}(r_{N}\xi_{k})|\le c^{N-n}(1-r_{0})$, $n=1,\dotsc,N$. Hence
$$
\frac{1}{m(I(N,\xi_{k}))}\int_{5I(N,\xi_{k})}|F_{N}(\xi)-F_{N}(r_{N}\xi_{k})|^{2}\,dm(\xi)\le c_{1}(1-r_{0})\sum^{N}_{n=1}|a_{n}|^{2}c^{N-n}, \quad k,N\ge 1. 
$$
Given $\varepsilon >0$, we deduce that for any $k,N\ge 1$, we have 
$$
\frac{1}{m(I(N,\xi_{k}))} m(\{\xi\in 5I(N,\xi_{k}):|F_{N}(\xi)-F_{N}(r_{N}\xi_{k})|>\varepsilon\})\le \frac{c_{1}(1-r_{0})}{\varepsilon^{2}} \sum^{N}_{n=1}|a_{n}|^{2}c^{N-n}.
$$
Since $\{I(N,\xi_{k}): k=1,2, \dotsc\}$ are pairwise disjoint, we deduce
$$
m(E(N))\le \frac{c_{1}(1-r_{0})}{\varepsilon^{2}} \sum^{N}_{n=1}|a_{n}|^{2} c^{N-n}.
$$
Then
$$
\sum^{\infty}_{N=1}m(E(N)) \le \frac{c_{1}(1-r_{0})}{\varepsilon^{2}} \sum^{\infty}_{N=1} \sum^{N}_{n=1}|a_{n}|^{2}c^{N-n} =  \frac{c_{1}(1-r_{0})}{\varepsilon^{2}(1-c)} \sum^{\infty}_{n=1}|a_{n}|^{2}.
$$
This proves \eqref{eq2.7} and shows that $m(E)=0$. Finally for almost every $\xi\in \partial\mathbb{D}\backslash E$ we will construct the sequence $\{z_{N}=z_{N}(\xi) \}$ for which \eqref{eq2.5} holds. Since $m(\partial\mathbb{D}\backslash\cup 5I(N,\xi_{k}))=0$, for almost every $\xi\in\partial\mathbb{D}\backslash E$ there exists $N_{0}=N_{0}(\xi)>0$ such that for any~$N>N_{0}$ there exists $\xi_{k}\in\partial\mathbb{D}$ with $\xi\in 5I(N,\xi_{k})$ and
\begin{equation}\label{eq2.8}
|F_{N}(\xi)-F_{N}(r_{N}\xi_{k} )|<\varepsilon.
\end{equation}
The choice of $r_{N}$ and \eqref{eq2.6} gives that
$$
|f^{n}(r_{N}\xi_{k})|\le c_{0}^{n-N},\quad n\ge N.
$$
Hence
\begin{equation}\label{eq2.9}
\sum_{n>N}|a_{n}| |f^{n}(r_{N}\xi_{k})|\le \sum_{n>N}|a_{n}|c_{0}^{n-N}\le \left(1-c_{0}^2 \right)^{-1/2} \left(\sum_{n>N}|a_{n}|^{2}\right)^{1/2}.
\end{equation}
Pick $z_N = z_N (\xi) = r_N \xi_k$, where $\xi_k$ is chosen such that $\xi\in 5I(N,\xi_{k})$. Now \eqref{eq2.8} and \eqref{eq2.9} imply \eqref{eq2.5}. Note that since $\xi\in 5I(N, \xi_{k})$ and $r_{N}\to 1$, the points~$z_{N}$ converge non-tangentially to~$\xi$ as $N$ tends to $\infty$.

We now prove that (b) implies (c). Given two sets $A,B\subset\partial\mathbb{D}$ we use the notation $A\overset{\text{a.e.}}{=}B$ if $A$ and $B$ differ at most on a set of Lebesgue measure zero. Consider the set
$$
A=\left\{\xi\in\partial\mathbb{D}:\sup_{N}\left|\sum^{N}_{n=1}a_{n}f^{n}(\xi)\right|<\infty\right\}.
$$
Note that $f^{-k} (f^{k}(A))\overset{\text{a.e.}}{=} A$ for any $k=1,2, \dotsc$ Since the mapping $f\colon \partial\mathbb{D}\to \partial\mathbb{D}$ is exact (\cite{Ne}) we deduce that $m(A)=0$ or $m(A)=1$. Our assumption gives $m(A)=1$. Write
$$
F_{N}=\sum^{N}_{n=1}a_{n}f^{n}
$$
and $s_{N}=\|F_{N}\|_{2}$. By Lemma~\ref{lem2.3} there exists a constant~$c=c(f)>1$ such that
\begin{equation}\label{eq2.10}
c^{-1} \sum^{N}_{n=1}|a_{n}|^{2}\le s^{2}_{N}\le c\sum^{N}_{n=1}|a_{n}|^{2},\quad N=1,2,\dotsc
\end{equation}
Now the Paley--Zygmund inequality stated in Lemma~\ref{lem2.4} gives that for any $0<\lambda<1$ we have
$$
m\{\xi\in\partial\mathbb{D}:|F_{N}(\xi)|^{2}\ge \lambda s_{N}^{2}\}\ge (1-\lambda)^{2}\frac{s_{N}^{4}}{\displaystyle\int_{\partial\mathbb{D}}|F_{N}|^{4}\,dm}.
$$
It was proved in \cite[part (c) of Theorem~9]{NS2} that there exists a constant~$c_{1}=c_{1}(f)>0$ such that
$$
\int_{\partial\mathbb{D}}|F_{N}|^{4}\,dm \le c_{1} s_{N}^{4}.
$$
Hence 
$$
m\{\xi\in\partial\mathbb{D}: |F_{N}(\xi)|^{2}\ge \lambda s_{N}^{2}\}\ge c_{1}^{-1} (1-\lambda)^{2},\quad 0<\lambda <1.
$$
Since $m(A)=1$, we deduce that $s_{N}$ is bounded. Applying \eqref{eq2.10} we deduce $\sum\limits_{n}|a_{n}|^{2}<\infty$.
\end{proof}

We are now ready to prove Corollaries~\ref{coro1.3} and~\ref{coro1.4}.

\begin{proof}[Proof of Corollary~\ref{coro1.3}]
By Lemma \ref{lem2.3}, the series $\sum a_{n}f^{n}$ converges in $\mathbb{H}^{2}(\mathbb{D})$. Let $F=\sum\limits_{n\ge 1} a_{n}f^{n}$. Theorem~\ref{theo1.2} gives that $F\in\operatorname{BMOA}(\mathbb{D})$ and $\|F\|^{2}_{\operatorname{BMOA}(\mathbb{D})}\le C \sum\limits_{n\ge 1}|a_{n}|^{2}$. The converse estimate follows easily from Lemma \ref{lem2.3} which gives 
$$
\|F\|_{\operatorname{BMOA}(\mathbb{D})}^{2}\ge \|F\|_{2}^{2}\ge \frac{1-|f'(0)|}{1+|f'(0)|} \sum\limits_{n=1}^{\infty} |a_{n}|^{2} . 
$$
The identity~\eqref{eq1.1} follows from \eqref{eq2.5}. This finishes the proof of (a). Assume now that $f$ is a finite Blaschke product. Fix a number $0<r<1$ and let $0<c=c(r,f)<1$ be the constant appearing in part~(a) of Lemma~\ref{lem2.1}. For $z\in\mathbb{D}$ let $N=N(z)$ be the smallest positive integer such that $|f^{N}(z)|\le r$. Note that $|f^{n}(z)|\le r$ for $n\ge N$ and $1-|f^{n}(z)|\le c^{N-n}$ for $1\le n\le N$. Since $f$ is a finite Blaschke product, we have
\begin{equation}\label{eq2.11}
\lim_{|z|\to 1}N(z)=+\infty.
\end{equation}
Now
$$
\sum^{\infty}_{n=1}|a_{n}|^{2} (1-|f^{n}(z)|)\le \sum^{N}_{n=1}|a_{n}|^{2}c^{N-n}+\sum^{\infty}_{n=N+1}|a_{n}|^{2}
$$
and \eqref{eq2.11} gives
$$
\lim_{|z|\to 1}\sum^{\infty}_{n=1}|a_{n}|^{2} (1-|f^{n}(z)|^{2})=0.
$$
Theorem~\ref{theo1.2} finishes the proof.
\end{proof}

\begin{proof}[Proof of Corollary~\ref{coro1.4}]
Let $F=\sum\limits_{n=1}^{\infty}a_{n}f^{n}$ and $s^{2}=\sum\limits_{n=1}^{\infty}|a_{n}|^{2}$. We first prove the upper estimate.
Consider the distribution function $\Phi(\lambda)=m\{\xi\in\partial\mathbb{D}: |F(\xi)|>\lambda\}$, $\lambda>0$. Corollary~\ref{coro1.3} and the John--Nirenberg Theorem give that there exist universal constants $A,B>0$ such that
$$
\Phi(\lambda)\le A e^{-B\lambda/s},\quad \lambda>0.
$$
Then
$$
\|F\|_{p}^{p}=\int_{0}^{\infty} p\lambda^{p-1} \Phi (\lambda)\,d\lambda\le C(p)s^{p},
$$
where $C(p)$ is a constant depending on $A$, $B$ and $p$. The lower estimate follows from the following standard duality argument. We can assume $p<2$. By H\"older's inequality
$$
\int_{\partial\mathbb{D}}|F|^{2}\,dm\le \left(\int_{\partial\mathbb{D}} |F|^{p}\,dm\right)^{1/p}\left(
\int_{\partial\mathbb{D}} |F|^{q}\,dm\right)^{1/q},
$$
where $p^{-1}+q^{-1}=1$. By Lemma~\ref{lem2.3} and the upper estimate we have already proved, there exists a constant~$c=c(f,p)>0$ such that
$$
\sum^{\infty}_{n=1}|a_{n}|^{2}\le c\left( \int_{\partial\mathbb{D}} |F|^{p}\,dm\right)^{1/p} \left(\sum^{\infty}_{n=1}|a_{n}|^{2}\right)^{1/2}.
$$
This finishes the proof.
\end{proof}

\section{Uniform estimates}\label{sec3}

Given $F\in L^{1}(\partial \mathbb{D})$ and $z\in \mathbb{D}$, let $F(z)$ denote the value of its harmonic extension at~$z$, that is, 
$$
F(z)=\int_{\partial \mathbb{D}}F(\xi)P(z,\xi)\,dm (\xi).
$$
If $F$ is the characteristic function of a measurable set~$E\subset\partial \mathbb{D}$, the corresponding function is called the harmonic measure of the set~$E$ from the point~$z\in \mathbb{D}$ and will be denoted by $F(z) = w (z, E)$. A function $F\in L^{1}(\partial \mathbb{D})$ is in the space~$\operatorname{BMO}(\partial \mathbb{D})$ if
$$
\|F\|_{\operatorname{BMO}(\partial \mathbb{D})}^{2}=\sup_{z\in \mathbb{D}}\int_{\partial \mathbb{D}}|F(\xi)-F(z)|^{2}P(z,\xi)\,dm(\xi)<\infty.
$$
Equivalently, $F\in L^{1}(\partial \mathbb{D})$ is in $\operatorname{BMO}(\partial \mathbb{D})$ if and only if there exists a constant~$C=C(F)>0$ such that for any arc~$I\subset \partial \mathbb{D}$, we have
$$
\frac{1}{m(I)}\int_{I}|F-F_{I}|^{2}\,dm\le C.
$$
Here $F_{I}=m(I)^{-1}\int_{I}F\,dm$ denotes the mean of~$F$ on~$I$. See Chapter VI of~\cite{Ga}. For $z\in \mathbb{D}$ let $ \tau_{z}$  be the automorphism of~$\mathbb{D}$ given by $\tau_{z}(w)=(w-z)(1-\overline{z}w)^{-1}
$, $w\in \mathbb{D}$. The John--Nirenberg Theorem applied to $F\circ \tau_{z}-F(z)$ provides two universal constants~$A,B>0$ such that
\begin{equation}\label{eqJN}
w(z,\{\xi\in\partial \mathbb{D}: |F(\xi)-F(z)|>\lambda\})\le A e^{-B\lambda/\|F\|_{\operatorname{BMO}(\partial \mathbb{D})}},\quad \lambda>0. 
\end{equation}
See \cite{Ba}. The proof of Theorem \ref{theo1.5} is based on a careful study of the oscillation of the partial sums of $\sum a_n f^n$. We start with an auxiliary result which holds for any $BMO$ function. 

\begin{lemma}\label{lem3.1}
For any $0<c<1$ there exists a constant~$0< \delta=\delta(c) <1$ such that the following statement holds. Let $F$ be a real valued function defined on~$\partial \mathbb{D}$ with $\|F\|_{\operatorname{BMO}(\partial \mathbb{D})}=1$. Let $z\in\mathbb{D}$  such that
\begin{equation}\label{eq3.1}
\int_{\partial\mathbb{D}}|F(\xi)-F(z)|^{2}P(z,\xi)\,dm(\xi)\ge c^{2}.
\end{equation}
Then $w(z,\{\xi\in\partial\mathbb{D}: F(\xi)-F(z)\ge \delta\})\ge \delta$.
\end{lemma}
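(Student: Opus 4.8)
The plan is to normalize so that $F(z) = 0$ after replacing $F$ by $F \circ \tau_z - F(z)$; note this does not change the BMO norm, and hypothesis \eqref{eq3.1} becomes $\int_{\partial\mathbb{D}} |F|^2 \, dm \ge c^2$, while we want to show $m\{\xi : F(\xi) \ge \delta\} \ge \delta$ for a suitable $\delta = \delta(c)$. The idea is that a function of mean zero and controlled $L^2$-norm-from-below, together with the John--Nirenberg exponential integrability, cannot be too small on the positive side without contradicting one of these two facts. Concretely, split the integral $\int |F|^2 = \int_{F \ge 0} F^2 + \int_{F < 0} F^2$; since $\int F \, dm = 0$, the positive and negative parts have equal integral, $\int_{F\ge 0} F \, dm = \int_{F < 0} |F| \, dm =: \mu$, so control must be obtained on the positive part as well.

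First I would use \eqref{eqJN} with $z$ the center (equivalently, the $L^2$-form on the disc reduces to averages over $\partial\mathbb{D}$ after the automorphism, or one simply invokes John--Nirenberg in the arc formulation): there are universal constants $A, B > 0$ with $m\{\xi : |F(\xi)| > \lambda\} \le A e^{-B\lambda}$ for all $\lambda > 0$, since $\|F\|_{\operatorname{BMO}} = 1$. This gives a tail bound which, integrated, yields a uniform upper bound $\int |F|^2 \, dm \le K$ for a universal constant $K$; more importantly it bounds $\int_{F \ge \lambda_0} F^2 \, dm$ by something that is small once $\lambda_0$ is large. Combined with $\int_{F \ge 0} F^2 \, dm \ge \tfrac12 \int |F|^2 \, dm \ge \tfrac12 c^2$ (using the mean-zero symmetry of the integrals — actually one only needs $\int_{F \ge 0} F^2 + \int_{F<0} F^2 \ge c^2$, hence at least one of the two is $\ge c^2/2$; if it is the negative part, apply the argument to $-F$, whose positive part then carries the mass, and the conclusion about $-F$ must be converted — so it is cleaner to argue directly that $\int_{F \ge 0} F^2 \ge c^2/2$ fails only if $\int_{F<0}F^2 \ge c^2/2$, and in that case $\mu = \int_{F<0}|F| \ge$ something, forcing $\int_{F\ge 0} F = \mu$ to be bounded below, which again forces $m\{F \ge \delta\}$ to be bounded below via the tail bound).

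The key quantitative step: from $\int_{F \ge 0} \min(F, \lambda_0)^2 \, dm \ge c^2/2 - \int_{F \ge \lambda_0} F^2 \, dm \ge c^2/4$ (choosing $\lambda_0 = \lambda_0(c)$ large enough, using the tail bound to absorb the truncation error), and $\min(F,\lambda_0)^2 \le \lambda_0^2$ pointwise, we get
\[
\frac{c^2}{4} \le \int_{0 \le F < \delta} F^2 \, dm + \lambda_0^2 \, m\{\xi : F(\xi) \ge \delta\} \le \delta^2 + \lambda_0^2 \, m\{\xi : F(\xi) \ge \delta\},
\]
so that $m\{\xi : F(\xi) \ge \delta\} \ge \lambda_0^{-2}(c^2/4 - \delta^2)$. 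Choosing $\delta = \delta(c)$ small enough that $\delta^2 \le c^2/8$ and also $\delta \le \lambda_0^{-2} c^2 / 8$ gives the conclusion $w(z, \{F(\xi) - F(z) \ge \delta\}) \ge \delta$ after undoing the normalization.

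The main obstacle is the bookkeeping around mean-zero: \eqref{eq3.1} only gives lower control on $\int|F|^2$, which could a priori be concentrated entirely on $\{F < 0\}$; one has to leverage $\int F\, dm = 0$ to transfer a lower bound to the positive side. The clean way is: let $\mu = \int_{F \ge 0} F \, dm = \int_{F<0}|F|\,dm$. If $\mu \ge c^2/(8\lambda_0)$ (with $\lambda_0$ to be fixed), then $\mu = \int_{0 \le F < \delta} F + \int_{F \ge \delta} F \le \delta + \lambda_0 m\{F \ge \delta\} + \int_{F \ge \lambda_0} F$, and the last term is small by the tail bound, giving a lower bound on $m\{F \ge \delta\}$. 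If instead $\mu < c^2/(8\lambda_0)$, then $\int_{F \ge \lambda_0} F \le \mu$ is tiny, hence $\int_{F \ge \lambda_0}F^2 \le$ (tail estimate) is tiny, and also $\int_{0 \le F < \lambda_0} F^2 \le \lambda_0 \int_{F\ge 0} F = \lambda_0 \mu < c^2/8$; combined with the analogous bound on the negative side — where $\int_{F<0} F^2 \le \lambda_0 \mu + \int_{F<-\lambda_0}F^2 < c^2/8 + (\text{tiny})$ — we would get $\int|F|^2 < c^2$, contradicting \eqref{eq3.1}. Thus the case $\mu < c^2/(8\lambda_0)$ cannot occur once $\lambda_0$ is chosen large enough (depending only on $c$ and the universal $A,B$), and we are always in the first case. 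Everything else is routine.
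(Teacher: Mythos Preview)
Your argument is correct and follows essentially the same approach as the paper: both proofs combine the John--Nirenberg tail estimate with the mean-zero identity $\int (F(\xi)-F(z))P(z,\xi)\,dm(\xi)=0$ to force the positive level set to have definite measure. The organization differs slightly---the paper first shows $w(z,\{|F-F(z)|\ge c/2\})$ is bounded below and then argues by contradiction that if $w(z,\{F-F(z)>\delta\})$ were small the negative integral would dominate, violating mean zero, whereas you work directly with the quantity $\mu=\int F_+\,dm=\int F_-\,dm$ and show by a case split that $\mu$ must be bounded below---but the ingredients and logic are the same. Your exposition would benefit from cutting the first two exploratory paragraphs and leading with the clean case analysis on $\mu$ from the last paragraph; also note the automorphism should be $\tau_z^{-1}$ (mapping $0$ to $z$) rather than $\tau_z$ for the change of variables to convert $P(z,\cdot)\,dm$ to $dm$.
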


\begin{proof}
Given $0\le a<b$, let $E(a,b)$ denote the set of points~$\xi\in\partial\mathbb{D}$ such that $a\le |F(\xi)-F(z)|\le b$. Apply the John--Nirenberg estimate~\eqref{eqJN} to find a constant~$c_{1}=c_{1}(c)>0$, such that
$$
\int_{\partial\mathbb{D}\backslash E(0,c_{1})} |F(\xi)-F(z)|^{2}P(z,\xi)\,dm(\xi)\le \frac{c^{2}}{4}.
$$
Since
$$
\int_{E(0,c/2)}|F(\xi)-F(z)|^{2}P(z,\xi)\,dm(\xi)\le \frac{c^{2}}{4},
$$
we deduce that
$$
\int_{E(c/2,c_{1})}|F(\xi)-F(z)|^{2}P(z,\xi)\,dm(\xi)\ge\frac{c^{2}}{2}.
$$
Hence
\begin{equation}\label{eq3.2}
w\left(z,\left\{\xi\in\partial \mathbb{D}: |F(\xi)-F(z)|\ge \frac{c}{2}\right\}\right)\ge \frac{c^{2}}{2c_{1}^{2}}.
\end{equation}
Apply the John--Nirenberg estimate~\eqref{eqJN} to find a constant~$c_{2}=c_{2}(c)>0$ such that
\begin{equation}\label{eq3.3}
\int_{\partial\mathbb{D}\backslash E(0,c_{2})} |F(\xi)-F(z)|P(z,\xi)\,dm(\xi)\le \frac{c^{3}}{16c_{1}^2}.
\end{equation}
Let $0<\delta=\delta(c)<c/2$ be a small constant to be fixed later.~We will show that if $\delta >0$ is conveniently chosen, then
\begin{equation}\label{eq3.4}
w(z,\{\xi:F(\xi)-F(z)>\delta\})\ge\min \left\{\frac{c^{3}}{16c_{1}^{2}c_{2}},\frac{c^{2}}{16c_{1}^{2}}\right\}.
\end{equation}
We argue by contradiction. Assume
\begin{equation}\label{eq3.5}
w(z,\{\xi:F(\xi)-F(z)>\delta\})\le\min \left\{\frac{c^{3}}{16c_{1}^{2}c_{2}},\frac{c^{2}}{16c_{1}^{2}}\right\}.
\end{equation}
Apply \eqref{eq3.2} to deduce that $w(z,\{\xi:F(\xi)-F(z)<-c/2\})\ge 7c^{2}/16c_{1}^{2}$. Hence
\begin{equation}\label{eq3.6}
\int_{\{\xi:F(\xi)-F(z)\le 0\}} (F(\xi)-F(z))P(z,\xi)\,dm(\xi)\le \frac{-7c^{3}}{32c_{1}^{2}}.
\end{equation}
For $0< \gamma < \tau$ consider the set $G(\gamma, \tau) = \{\xi \in \partial \mathbb{D} : \gamma \leq F(\xi )- F(z) \leq \tau\}$. We have 
\begin{equation*}
\begin{split}
\int_{\{\xi:F(\xi)-F(z)>0\}} (F(\xi)-F(z))P(z,\xi)\,dm(\xi)&\le\delta +\int_{G(\delta , c_2)}
(F(\xi)-F(z))P(z,\xi)\,dm(\xi) \\
&\quad+\int_{\{\xi:F(\xi)-F(z)>c_{2}\}}(F(\xi)-F(z))P(z,\xi)\,dm(\xi).
\end{split}
\end{equation*}
The choice~\eqref{eq3.3} of $c_{2}$ gives that last integral is bounded by $c^{3}/16c_{1}^{2}$. Moreover by~\eqref{eq3.5}, we have 
$$
\int_{G(\delta , c_2)} (F(\xi)-F(z))P(z,\xi)\,dm(\xi) \le c_{2}w(z,\{\xi:F(\xi)-F(z)>\delta\})\le \frac{c^{3}}{16c_{1}^{2}}.
$$
We deduce
\begin{equation}\label{eq3.7}
\int_{\{\xi: F(\xi)-F(z)>0\}} (F(\xi)-F(z))P(z,\xi)\,dm(\xi) \le\delta +\frac{c^{3}}{8c_{1}^{2}}.
\end{equation}
Choosing $0<\delta <c^{3}/16c_{1}^{2}$ we observe that \eqref{eq3.6} and \eqref{eq3.7} contradict the identity 
$$
\int_{\partial\mathbb{D}}  (F(\xi)-F(z))P(z,\xi)\,dm(\xi) =0.
$$
Hence choosing $0<\delta<\min\{c/2,c^{3}/16c_{1}^{2}\}$, estimate~\eqref{eq3.4} holds. This finishes the proof.
\end{proof}

Our next auxiliary result says that $F=\sum\limits_{n=M}^{N}a_{n}f^{n}$ satisfies condition~\eqref{eq3.1} in Lemma~\ref{lem3.1} if $|f^{M}(z)|$ is sufficiently small.

\begin{lemma}\label{lem3.2}
Let $f$ be an inner function with $f(0)=0$ which is not a rotation. Then there exists a constant~$ 0< \varepsilon =\varepsilon (f)< 1$ such that if $M<N$ are positive integers, $\{a_{n}:M \le n\le N\}$ are complex numbers and $z\in\mathbb{D}$ satisfies $|f^{M}(z)|\le\varepsilon$, we have
$$
\int_{\partial\mathbb{D}}\left|\sum^{N}_{n=M}a_{n}(f^{n}(\xi)-f^{n}(z))\right|^{2} P(z,\xi)\,dm(\xi)\ge \frac{1}{2}\, \frac{1-|f'(0)|}{1+|f'(0)|} \sum^{N}_{n=M}|a_{n}|^{2}.
$$
\end{lemma}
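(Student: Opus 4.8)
The plan is to move the estimate to the origin by a M\"obius change of variables, turning it into a lower bound for an $\mathbb H^{2}$-norm, and then to compare that norm with the one at $z=0$, where Lemma~\ref{lem2.3} applies. Throughout, $\langle u,v\rangle=\int_{\partial\mathbb D}\overline{u}\,v\,dm$ and $\|\cdot\|_{2}$ denote the $\mathbb H^{2}$ inner product and norm; write $\beta=|f'(0)|<1$, and let $r_{0}=r_{0}(f)$, $c_{0}=c_{0}(f)$ be the constants of Corollary~\ref{coro2.2} (we may enlarge $c_{0}$ so that $c_{0}\ge\beta$). We take $\varepsilon\le r_{0}$. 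Fix $z$ with $|f^{M}(z)|\le\varepsilon$, let $\varphi=\tau_{z}^{-1}$ be the automorphism of $\mathbb D$ with $\varphi(0)=z$, and set $g_{n}=f^{n}\circ\varphi$, $b_{n}=g_{n}(0)=f^{n}(z)$. Each $g_{n}$ is inner, $g_{k}=f^{k-n}\circ g_{n}$ for $k>n$, and since $f^{n-M}(0)=0$, Schwarz's Lemma gives $|b_{n}|=|f^{n-M}(f^{M}(z))|\le|f^{M}(z)|\le\varepsilon$ for $n\ge M$, while Corollary~\ref{coro2.2} sharpens this to $|b_{n}|\le r_{0}^{-1}c_{0}^{\,n-M}\varepsilon$. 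Writing $F=\sum_{n=M}^{N}a_{n}f^{n}$, a conformal change of variables ($\varphi(0)=z$) together with $(F\circ\varphi)(0)=F(z)$ gives
$$
\int_{\partial\mathbb D}|F(\xi)-F(z)|^{2}P(z,\xi)\,dm(\xi)=\Bigl\|\sum_{n=M}^{N}a_{n}g_{n}\Bigr\|_{2}^{2}-|F(z)|^{2},
$$
and Cauchy--Schwarz with the bound on $|b_{n}|$ shows $|F(z)|^{2}\le\bigl(\sum|a_{n}|^{2}\bigr)\sum|b_{n}|^{2}\le r_{0}^{-2}(1-c_{0}^{2})^{-1}\varepsilon^{2}\sum_{n=M}^{N}|a_{n}|^{2}$, negligible for $\varepsilon$ small. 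So it remains to bound $\bigl\|\sum_{n=M}^{N}a_{n}g_{n}\bigr\|_{2}^{2}$ from below.

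For $j\ge1$ put $h_{j}(w)=f^{j}(w)/w$; since $f^{j}(0)=0$ this is analytic on $\mathbb D$, with $h_{j}(0)=f'(0)^{j}$ and $\|h_{j}\|_{\infty}\le1$ by Schwarz's Lemma. Because $g_{n}$ is inner, $\overline{g_{n}}\,g_{k}=g_{k}/g_{n}=h_{k-n}\circ g_{n}$ a.e.\ on $\partial\mathbb D$ for $k>n$, so $\langle g_{n},g_{k}\rangle=(h_{k-n}\circ g_{n})(0)=h_{k-n}(b_{n})$, and hence
$$
\Bigl\|\sum_{n=M}^{N}a_{n}g_{n}\Bigr\|_{2}^{2}=\sum_{n=M}^{N}|a_{n}|^{2}+2\operatorname{Re}\sum_{M\le n<k\le N}\overline{a}_{n}a_{k}\,h_{k-n}(b_{n}).
$$
Carrying out the same computation at $z=0$ (so $g_{n}=f^{n}$, $b_{n}=0$) identifies $\sum_{n=M}^{N}|a_{n}|^{2}+2\operatorname{Re}\sum_{M\le n<k\le N}\overline{a}_{n}a_{k}f'(0)^{k-n}$ with $\bigl\|\sum_{n=M}^{N}a_{n}f^{n}\bigr\|_{2}^{2}$, which is $\ge\tfrac{1-\beta}{1+\beta}\sum_{n=M}^{N}|a_{n}|^{2}$ by Lemma~\ref{lem2.3} (applied with coefficients $a_{1}=\dots=a_{M-1}=0$, $a_{M},\dots,a_{N}$). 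Subtracting, everything reduces to a uniform-in-$n$ bound on $|h_{j}(b_{n})-f'(0)^{j}|$ for $j=k-n$.

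The crux is that this error must be estimated by something \emph{summable in the gap} $j$: a bound of size $\varepsilon$ alone is useless, since summing it over all pairs loses a factor equal to the number of terms. I would use two regimes. For small $j$: $h_{j}$ maps $\mathbb D$ into $\overline{\mathbb D}$ with $|h_{j}(0)|=\beta^{j}<1$, so $\tau_{h_{j}(0)}\circ h_{j}$ vanishes at $0$ and Schwarz's Lemma gives $\rho(h_{j}(b_{n}),h_{j}(0))\le|b_{n}|$, whence $|h_{j}(b_{n})-f'(0)^{j}|\le2|b_{n}|\le2\varepsilon$. For large $j$: Corollary~\ref{coro2.2} gives $|h_{j}(b_{n})|=|f^{j}(b_{n})|/|b_{n}|\le r_{0}^{-1}c_{0}^{\,j}$, and with $\beta^{j}\le c_{0}^{\,j}$ this yields $|h_{j}(b_{n})-f'(0)^{j}|\le(r_{0}^{-1}+1)c_{0}^{\,j}$. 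Splitting the double sum at a threshold $J$ and using $\sum_{n}|a_{n}||a_{n+j}|\le\sum_{n}|a_{n}|^{2}$,
$$
\sum_{M\le n<k\le N}|a_{n}||a_{k}|\,\bigl|h_{k-n}(b_{n})-f'(0)^{k-n}\bigr|\le\Bigl(2\varepsilon J+(r_{0}^{-1}+1)\frac{c_{0}^{\,J+1}}{1-c_{0}}\Bigr)\sum_{n=M}^{N}|a_{n}|^{2}.
$$
Finally I choose constants so that the three error contributions (this sum, its factor-$2$ appearance, and $|F(z)|^{2}$) are each at most $\tfrac16\tfrac{1-\beta}{1+\beta}\sum|a_{n}|^{2}$: first pick $J=J(f)$ with $2(r_{0}^{-1}+1)c_{0}^{\,J+1}/(1-c_{0})\le\tfrac16\tfrac{1-\beta}{1+\beta}$, then pick $\varepsilon=\varepsilon(f)\le r_{0}$ with $4\varepsilon J\le\tfrac16\tfrac{1-\beta}{1+\beta}$ and $r_{0}^{-2}\varepsilon^{2}/(1-c_{0}^{2})\le\tfrac16\tfrac{1-\beta}{1+\beta}$. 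Combining the displays yields the asserted bound $\tfrac12\tfrac{1-|f'(0)|}{1+|f'(0)|}\sum_{n=M}^{N}|a_{n}|^{2}$.

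The main obstacle is precisely the two-regime perturbation estimate: one has to recognise that the naive $O(\varepsilon)$ bound on $h_{j}(b_{n})-f'(0)^{j}$ cannot be summed over pairs, and interpolate it with the Pommerenke/Denjoy--Wolff exponential decay of the iterates (via Corollary~\ref{coro2.2}) to obtain a bound summable in the gap $j$. The change of variables, the Gram-matrix identity $\langle g_{n},g_{k}\rangle=h_{k-n}(b_{n})$, and the reduction to Lemma~\ref{lem2.3} are otherwise routine.
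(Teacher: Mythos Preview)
Your proof is correct and follows the same overall strategy as the paper: expand the quadratic form, compare it with the $z=0$ case via Lemma~\ref{lem2.3}, and show the cross-term perturbation is $O(\varepsilon)\sum|a_n|^2$. The M\"obius change of variables and the Gram identity $\langle g_n,g_k\rangle=h_{k-n}(b_n)$ are just a repackaging of the paper's formula~\eqref{eq3.8} (indeed $h_{k-n}(b_n)=f^k(z)/f^n(z)$), and handling $|F(z)|^2$ separately rather than carrying the factors $1-|f^n(z)|^2$ is cosmetic.

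The one genuine difference is in how you control $|h_j(b_n)-f'(0)^j|$. The paper splits into cases: when $f'(0)\neq0$ it invokes the K\"onigs-type estimate of Lemma~\ref{lem2.1}(b) to get the single bound $|h_j(b_n)-f'(0)^j|\le c(r_0)\,|f'(0)|^{j}\varepsilon$, already summable in $j$; when $f'(0)=0$ it uses Lemma~\ref{lem2.1}(c). Your two-regime split (Schwarz for $j\le J$, Corollary~\ref{coro2.2} for $j>J$, then choose $J$ before $\varepsilon$) is a bit coarser but more elementary, and has the mild advantage of treating both cases $f'(0)=0$ and $f'(0)\neq0$ uniformly without appealing to the sharper part~(b) of Lemma~\ref{lem2.1}.
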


\begin{proof}
By continuity we can assume that $f^{n}(z)\ne 0$ for any~$n$. Write $F=\sum\limits_{n=M}^{N}a_{n}f^{n}$. In \eqref{eq2.3} we already noted that
\begin{equation}\label{eq3.8}
\begin{split}
\int_{\partial \mathbb{D}}|F(\xi)-F(z)|^{2}
P(z,\xi)\,dm(\xi)&=\sum^{N}_{n=M} |a_{n}|^{2}(1-|f^{n}(z)|^{2})\\
&\quad+2\operatorname{Re}\sum^{N-1}_{n=M}
\sum^{N}_{k>n}\overline{a}_{n}a_{k}(1-|f^{n}(z)|^{2}) \frac{f^{k}(z)}{f^{n}(z)}.
\end{split}
\end{equation}
The idea is to compare the expression above with
\begin{equation}\label{eq3.9}
\int_{\partial\mathbb{D}}|F(\xi)|^{2}\,dm(\xi)=\sum^{N}_{n=M}|a_{n}|^{2} +2\operatorname{Re}\sum^{N-1}_{n=M}\sum^{N}_{k>n}\overline{a}_{n}a_{k}f'(0)^{k-n}.
\end{equation}

Assume $f'(0)\ne 0$. For $k>n\ge M$ consider the function~$g=g_{k,n}$ defined by $g(w)=f^{k-n}(w)$, $w\in \mathbb{D}$. By part~(b) of Lemma~\ref{lem2.1}, there exists a constant~$0<r_{0}=r_{0}(f)<1$ such that $|g(w)|\le r_{0}^{-1} |f'(0)|^{k-n}|w|$ if $|w|<r_{0}$. Hence there exists a constant~$c(r_{0})>0$ such that
\begin{equation}\label{eq3.10}
\left|\frac{g(w)}{w}-g'(0)\right|\le c(r_{0})|f'(0)|^{k-n}|w|,\quad |w|\le \frac{r_{0}}{2}.
\end{equation}
Let $0<\varepsilon < r_{0}/2$ be a constant to be fixed later and assume $|f^{M}(z)|\le \varepsilon $. Taking $w=f^{n}(z)$ in \eqref{eq3.10} we obtain
$$
\left|\frac{f^{k}(z)}{f^{n}(z)}-f'(0)^{k-n}\right| \le c(r_{0})|f'(0)|^{k-n} |f^{n}(z)|\le \varepsilon c(r_{0})|f'(0)|^{k-n}, \quad k>n \geq M.  
$$
By part (b) of Lemma 2.1, we also have $|f^{k}(z)|\le r_{0}^{-1}|f'(0)|^{k-M}\varepsilon $ and $|f^{n}(z)|\le r_{0}^{-1}|f'(0)|^{n-M}\varepsilon$ if $k \ge M$ and $n\ge M$. Then
\begin{equation*}
\begin{split}
&\left|\sum^{N-1}_{n=M} \sum^{N}_{k>n}\overline{a}_{n}a_{k}\left( (1-|f^{n}(z)|^{2})\frac{f^{k}(z)}{f^{n}(z)}-f'(0)^{k-n}\right)\right|\\*[5pt]
\le
&\sum^{N-1}_{n=M}\sum^{N}
_{k>n}|a_{n}| |a_{k}| (\varepsilon c(r_{0})|f'(0)|^{k-n}+r_{0}^{-2}\varepsilon^{2}|f'(0)|^{k+n-2M}).
\end{split}
\end{equation*}
As in the proof of Theorem \ref{theo1.2}, writting $j =k-n$ and applying Cauchy-Schwarz inequality, we find a constant $c(f)>0$ such that
\begin{align*}
&\sum^{N-1}_{n=M} \sum^{N}_{k>n}|a_{n}| |a_{k}| |f'(0)|^{k-n}\le
c(f)\sum^{N}_{n=M}|a_{n}|^{2},\\*[5pt]
&\sum^{N-1}_{n=M} \sum^{N}_{k>n}|a_{n}| |a_{k}| |f'(0)|^{k+n-2M}\le
c(f)\sum^{N}_{n=M}|a_{n}|^{2}.
\end{align*}
We deduce that there exists a constant $c(f,r_{0})>0$ such that
$$
\left|\sum^{N-1}_{n=M}\sum^{N}_{k>n}\overline{a}_{n}a_{k}\left( (1-|f^{n}(z)|^{2}) \frac{f^{k}(z)}{f^{n}(z)}-f'(0)^{k-n}\right)\right|\le c(f,r_{0})\varepsilon \sum^{N}_{n=M}|a_{n}|^{2}.
$$
Applying \eqref{eq3.8} and \eqref{eq3.9}, we deduce
$$
\left|\int_{\partial\mathbb{D}}|F(\xi)-F(z)|^{2}P(z,\xi)\,dm(\xi)-\int_{\partial\mathbb{D}} |F(\xi)|^{2}\,dm(\xi)\right|\le \varepsilon^{2} \sum^{N}_{n=M}|a_{n}|^{2}+2c(f,r_{0})\varepsilon \sum^{N}_{n=M}|a_{n}|^{2}.
$$
Recall that
$$
\int_{\partial\mathbb{D}}|F(\xi)|^{2}\,dm(\xi)\ge \frac{1-|f'(0)|}{1+|f'(0)|}\sum^{N}_{n=M}|a_{n}|^{2}.
$$
Choose $0<\varepsilon <r_{0} / 2 $ small enough so that $\varepsilon^{2}+2c(f,r_{0})\varepsilon <(1-|f'(0)|) (1+|f'(0)|)^{-1}/2$ and deduce
$$
\int_{\partial\mathbb{D}}|F(\xi)-F(z)|^{2} P(z,\xi)\,dm(\xi)\ge \frac{1}{2}\, \frac{1-|f'(0)|}{1+|f'(0)|}\sum^{N}_{n=M}|a_{n}|^{2}.
$$
This finishes the proof if $f'(0) \neq 0$. Assume now $f'(0)=0$.~Apply part~(c) of Lemma~\ref{lem2.1} to the inner function~$f^{k-n}$ to obtain $|f^{k}(z)|\le |f^{n}(z)|^{2^{k-n}}$, $k>n$. Let $ 0 < \varepsilon < 1$ be a (small) constant to be fixed later and assume $|f^M (z)| \leq \varepsilon$. If $k > n\ge M$, we have 
$$
\frac{|f^k (z)|}{|f^n (z)|} \leq |f^n (z)|^{2^{k-n} - 1} \leq \varepsilon^{2^{k-n} - 1} \leq  \varepsilon^{k-n}. 
$$
Writing $j=k-n$ and applying Cauchy-Schwarz's inequality, we obtain 
$$
\left|\sum^{N-1}_{n=M}\overline{a}_{n}\sum^{N}_{k>n}a_{k}(1-|f^{n}(z)|^{2})\frac{f^{k}(z)}{f^{n}(z)}\right|\le
\sum^{N-1}_{n=M}\sum^{N}_{k>n}|a_{n}| |a_{k}|\varepsilon^{k-n}\le \varepsilon (1-\varepsilon)^{-1} \sum^{N}_{n=M} |a_{n}|^{2}.
$$
As before, applying \eqref{eq3.8} and \eqref{eq3.9} we deduce
$$
\left|\int_{\partial \mathbb{D}}|F(\xi)-F(z)|^{2}P(z,\xi)\,dm(\xi)-\int_{\partial\mathbb{D}}|F(\xi)|^{2}\,dm(\xi)\right|\le (\varepsilon^{2}+2\varepsilon (1-\varepsilon)^{-1})\sum^{N}_{n=M}|a_{n}|^{2}.
$$
Since
$$
\int_{\partial\mathbb{D}}|F(\xi)|^{2}\,dm(\xi)=\sum^{N}_{n=M}|a_{n}|^{2}
$$
we only need to pick $\varepsilon >0$ small enough  so that $\varepsilon^{2}+ 2 \varepsilon (1- \varepsilon)^{-1}<1/2$.
\end{proof}

Next auxiliary result is an easy consequence of Lemmas~\ref{lem3.1} and \ref{lem3.2}.

\begin{lemma}\label{lem3.3}
Let $f$ be an inner function with $f(0)=0$ which is not a rotation.~Then there exist two constants~$0 < \varepsilon =\varepsilon (f)< 1$ and $0 < c =c(f) < 1$ such that the following statement holds. Let $M<N$ be positive integers, $z\in\mathbb{D}$ satisfying $|f^{M}(z)|<\varepsilon$ and $\{a_{n}:M\le n\le N\}$ a set of complex numbers. Then there exists a set~$E=E(z,\{a_{n}\}, f)\subset \partial\mathbb{D}$ with $w(z,E)\ge c$ such that
$$
\operatorname{Re} \sum^{N}_{n=M}a_{n}f^{n}(\xi)\ge c \left(\sum_{n=M}^{N}|a_{n}|^{2}\right)^{1/2},\quad \xi\in E.
$$
\end{lemma}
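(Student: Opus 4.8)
The plan is to combine Lemmas~\ref{lem3.1} and~\ref{lem3.2}, the $\operatorname{BMO}$ bound of Theorem~\ref{theo1.2} and the exponential Denjoy--Wolff estimate of Corollary~\ref{coro2.2}. Write $F=\sum_{n=M}^{N}a_{n}f^{n}$, $u=\operatorname{Re}F$ and $s^{2}=\sum_{n=M}^{N}|a_{n}|^{2}$; the case $s=0$ is trivial, so assume $s>0$. Padding the coefficients with zeros for $n<M$, Theorem~\ref{theo1.2} provides a constant $C_{1}=C_{1}(f)$ with $\|F\|_{\operatorname{BMO}(\partial\mathbb{D})}\le C_{1}s$, and since $|u(\xi)-u(z)|\le|F(\xi)-F(z)|$ we also get $\|u\|_{\operatorname{BMO}(\partial\mathbb{D})}\le C_{1}s$. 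The constant $\varepsilon=\varepsilon(f)$ in the statement will be fixed only at the very end; for the moment we merely require it to be at most the constant $\varepsilon(f)$ of Lemma~\ref{lem3.2} and at most the constant $r_{0}(f)$ of Corollary~\ref{coro2.2}, whose two constants we denote $r_{0},c_{0}\in(0,1)$.

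The first point is that the real part automatically carries half of the $L^{2}(P(z,\cdot)\,dm)$ mass of the oscillation of $F$ at $z$. Indeed $(F(w)-F(z))^{2}$ is analytic in $w\in\mathbb{D}$ and vanishes at $w=z$, so $\int_{\partial\mathbb{D}}(F(\xi)-F(z))^{2}P(z,\xi)\,dm(\xi)=0$, and likewise $\int_{\partial\mathbb{D}}\overline{(F(\xi)-F(z))}^{2}P(z,\xi)\,dm(\xi)=0$; expanding $|\operatorname{Re}(F(\xi)-F(z))|^{2}=\tfrac14\bigl[(F(\xi)-F(z))^{2}+2|F(\xi)-F(z)|^{2}+\overline{(F(\xi)-F(z))}^{2}\bigr]$ and integrating against $P(z,\xi)\,dm(\xi)$ yields
$$
\int_{\partial\mathbb{D}}|u(\xi)-u(z)|^{2}P(z,\xi)\,dm(\xi)=\frac12\int_{\partial\mathbb{D}}|F(\xi)-F(z)|^{2}P(z,\xi)\,dm(\xi).
$$
Hence, if $|f^{M}(z)|\le\varepsilon$, Lemma~\ref{lem3.2} bounds the left side below by $\tfrac12\beta^{2}s^{2}$, where $\beta=\beta(f)>0$ with $\beta^{2}=\tfrac12(1-|f'(0)|)(1+|f'(0)|)^{-1}$. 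In particular $\|u\|_{\operatorname{BMO}(\partial\mathbb{D})}^{2}\ge\tfrac12\beta^{2}s^{2}$, so $\tilde u:=u/\|u\|_{\operatorname{BMO}(\partial\mathbb{D})}$ has $\operatorname{BMO}$ norm $1$ and $\int_{\partial\mathbb{D}}|\tilde u(\xi)-\tilde u(z)|^{2}P(z,\xi)\,dm(\xi)\ge\beta^{2}/(2C_{1}^{2})=:c'^{2}$, a number depending on $f$ only, which we may assume is less than $1$. Lemma~\ref{lem3.1} then produces $\delta=\delta(c')>0$ with $w(z,\{\xi\in\partial\mathbb{D}:u(\xi)-u(z)\ge\delta\|u\|_{\operatorname{BMO}(\partial\mathbb{D})}\})\ge\delta$.

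It remains to absorb the term $u(z)=\operatorname{Re}F(z)$. From $|f^{M}(z)|\le\varepsilon\le r_{0}$ and Corollary~\ref{coro2.2} applied to $f^{M}(z)$ one gets $|f^{M+k}(z)|\le r_{0}^{-1}c_{0}^{k}\varepsilon$ for all $k\ge0$, so by the Cauchy--Schwarz inequality $|u(z)|\le|F(z)|\le r_{0}^{-1}(1-c_{0}^{2})^{-1/2}\varepsilon\,s=:C_{2}\varepsilon s$ with $C_{2}=C_{2}(f)$. Thus on $E:=\{\xi\in\partial\mathbb{D}:u(\xi)-u(z)\ge\delta\|u\|_{\operatorname{BMO}(\partial\mathbb{D})}\}$, which has $w(z,E)\ge\delta$, every $\xi\in E$ satisfies
$$
\operatorname{Re}\sum_{n=M}^{N}a_{n}f^{n}(\xi)=u(\xi)\ge u(z)+\delta\|u\|_{\operatorname{BMO}(\partial\mathbb{D})}\ge\Bigl(\tfrac{\delta\beta}{\sqrt2}-C_{2}\varepsilon\Bigr)s.
$$
Since $\delta,\beta,C_{2}$ depend only on $f$, a final choice of $\varepsilon=\varepsilon(f)$ small enough (still at most the two thresholds above) gives $\tfrac{\delta\beta}{\sqrt2}-C_{2}\varepsilon\ge\tfrac{\delta\beta}{2\sqrt2}$, and then $c:=\min\{\delta,\tfrac{\delta\beta}{2\sqrt2}\}$ (decreased below $1$ if necessary) satisfies the conclusion.

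There is no deep obstruction here once Lemmas~\ref{lem3.1} and~\ref{lem3.2} are available; the only thing that needs care is checking that the constant $c'$ passed to Lemma~\ref{lem3.1}, and hence $\delta$, depends on $f$ alone---not on $z$, $M$, $N$ or the coefficients---so that the threshold $\varepsilon$ can be chosen once and for all without circularity. This is clear because $C_{1}$ is the constant of Theorem~\ref{theo1.2} and $\beta$ that of Lemma~\ref{lem3.2}, both functions of $f$ only; everything else---the identity splitting the oscillation into equal real and imaginary parts, and the geometric estimate for $F(z)$---is routine.
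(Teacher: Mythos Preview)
Your argument is correct and follows the same route as the paper: the identity $\int(\operatorname{Re}G)^{2}P(z,\cdot)\,dm=\tfrac12\int|G|^{2}P(z,\cdot)\,dm$ for $G=F-F(z)$, then Lemma~\ref{lem3.2} for the lower bound, Lemma~\ref{lem3.1} applied to the normalized real part, and finally Corollary~\ref{coro2.2} with Cauchy--Schwarz to absorb $|F(z)|$. The only cosmetic difference is that the paper quotes Corollary~\ref{coro1.3} for the two-sided comparability of $\|\operatorname{Re}F\|_{\operatorname{BMO}}$ with $s$, whereas you obtain the upper bound directly from Theorem~\ref{theo1.2} and the lower bound from the oscillation at the given point $z$; your version is slightly more self-contained but otherwise identical.
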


\begin{proof}
Note that for any analytic function~$G\in \mathbb{H}^{2}(\mathbb{D})$ with $G(z)=0$, we have
\begin{equation}\label{eq3.11}
\int_{\partial\mathbb{D}}(\operatorname{Re}G(\xi))^{2} P(z,\xi)\,dm(\xi)=\frac{1}{2}\int_{\partial\mathbb{D}}|G(\xi)|^{2}P(z,\xi)\,dm(\xi).
\end{equation}
Let $0<r_{0}=r_{0}(f)<1$ and $0<c_{0}=c_{0}(f)<1$ be the constants given in Corollary~\ref{coro2.2} and let $\varepsilon_{0}=\varepsilon_{0}(f)>0$ be the constant appearing in Lemma~\ref{lem3.2}. Let $0<\varepsilon<\min \{\varepsilon_{0},r_{0}\}$ be a (small) constant to be fixed later. Lemma~\ref{lem3.2} and the identity \eqref{eq3.11} applied to the function~$G=F-F(z)$, where $F=\sum\limits_{n=M}^{N} a_{n}f^{n}$, give that
$$
\int_{\partial\mathbb{D}}(\operatorname{Re}F(\xi)-\operatorname{Re}F(z))^{2}P(z,\xi)\,dm(\xi)\ge c(f)\sum^{N}_{n=M}|a_{n}|^{2},
$$
where $c(f)=4^{-1}(1-|f'(0)|)(1+|f'(0)|)^{-1}$. Applying Lemma~\ref{lem3.1} to the function 
$$
(\operatorname{Re}F)\|\mathrm{Re}\,F\|^{-1}_{\operatorname{BMO}(\partial\mathbb{D})} , 
$$ 
we find a constant~$\delta=\delta(f)>0$ and a set $E=E(z,\{a_{n}\}, f)\subset\partial\mathbb{D}$ with $w(z,E)\ge\delta$ such that
$$
\operatorname{Re}F(\xi)-\operatorname{Re}F(z)\ge\delta \|\mathrm{Re}\,F\|_{\operatorname{BMO}(\partial\mathbb{D})},\quad \xi\in E.
$$
By Corollary~\ref{coro1.3}, $\|\mathrm{Re}\, F\|^{2}_{\operatorname{BMO}(\partial\mathbb{D})}$ is comparable to $\sum\limits_{n=M}^{N}|a_{n}|^{2}$. Reducing $\delta>0$ if necessary, we can assume
\begin{equation}\label{eq3.12}
\operatorname{Re}F(\xi)-\operatorname{Re}F(z)\ge \delta\left(\sum^{N}_{n=M}|a_{n}|^{2}\right)^{1/2},\quad \xi\in E.
\end{equation}
We have $|f^{M}(z)|<\varepsilon$. Note that by Corollary~\ref{coro2.2}, we obtain $|f^{n}(z)|\le r_{0}^{-1}c_{0}^{n-M}\varepsilon$, $n\ge M$. Cauchy-Schwarz inequality yields 
$$
|F(z)|\le \varepsilon c(c_{0},r_{0})\left(\sum^{N}_{n=M}|a_{n}|^{2}\right)^{1/2},
$$
where $c(c_{0},r_{0})$ is a constant depending on~$c_{0}$ and~$r_{0}$. Pick $0 < \varepsilon < \min \{ \varepsilon_0 , r_0\}$ sufficiently small such that $\varepsilon  c(c_{0},r_{0}) \le \delta/2$. Taking $c =\delta/2$ the proof is completed.
\end{proof}

The next easy auxiliary result is stated for future reference.  
\begin{lemma}\label{lem3.4}
Let $F\in\operatorname{BMO}(\partial\mathbb{D})$ and $0<c<1$. Then there exists a constant  $\varepsilon=\varepsilon(c)>0$ such that if $I\subset \partial\mathbb{D}$ is an arc with $m ( \{ \xi\in I: \operatorname{Re}F(\xi)\ge c\|F\|_{\operatorname{BMO}(\partial\mathbb{D})} \} )\ge cm(I)$ and
\begin{equation}\label{3.99}
    \frac{1}{m(I)}\int_{I}|F-F_{I}|^{2}\,dm\le \varepsilon\|F\|^{2}_{\operatorname{BMO}(\partial\mathbb{D})},
\end{equation}
then $\operatorname{Re}(F_{I})\ge 2^{-1}c\|F\|_{\operatorname{BMO}(\partial\mathbb{D})}$.
\end{lemma}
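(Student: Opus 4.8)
The plan is a short argument by contradiction that makes quantitative the intuitive statement: if a $\operatorname{BMO}$ function has very small mean oscillation over an arc $I$ and is bounded below by $c\|F\|_{\operatorname{BMO}(\partial\mathbb{D})}$ on a definite fraction of $I$, then its average over $I$ cannot drop much below that bound. Note first that $F_I$ is well defined and finite since $F\in L^1(\partial\mathbb{D})$, and that the statement is homogeneous in $F$, so I would normalize to $\|F\|_{\operatorname{BMO}(\partial\mathbb{D})}=1$. With this normalization the hypotheses read $m(\{\xi\in I:\operatorname{Re}F(\xi)\ge c\})\ge c\,m(I)$ and $m(I)^{-1}\int_I|F-F_I|^2\,dm\le\varepsilon$, and the goal becomes $\operatorname{Re}(F_I)\ge c/2$.

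Denote by $G$ the set $\{\xi\in I:\operatorname{Re}F(\xi)\ge c\}$, so $m(G)\ge c\,m(I)$. I would then assume, for contradiction, that $\operatorname{Re}(F_I)<c/2$. On $G$ this gives $\operatorname{Re}(F(\xi)-F_I)=\operatorname{Re}F(\xi)-\operatorname{Re}(F_I)>c-c/2=c/2$, and in particular $|F(\xi)-F_I|>c/2$ for every $\xi\in G$. Integrating over $G\subset I$ and using $m(G)\ge c\,m(I)$ yields
$$
\frac{1}{m(I)}\int_I|F-F_I|^2\,dm\ \ge\ \frac{1}{m(I)}\int_G|F-F_I|^2\,dm\ \ge\ \Bigl(\frac{c}{2}\Bigr)^{2}\frac{m(G)}{m(I)}\ \ge\ \frac{c^{3}}{4}.
$$

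Consequently it suffices to take any $\varepsilon=\varepsilon(c)$ with $0<\varepsilon<c^{3}/4$: under this choice the displayed hypothesis \eqref{3.99} contradicts the inequality above, so the assumption $\operatorname{Re}(F_I)<c/2$ is untenable and $\operatorname{Re}(F_I)\ge c/2$. Undoing the normalization, this is exactly $\operatorname{Re}(F_I)\ge 2^{-1}c\|F\|_{\operatorname{BMO}(\partial\mathbb{D})}$, as claimed. There is essentially no obstacle in this proof; the only points requiring care are the homogeneity normalization and the bookkeeping of the single constant $\varepsilon(c)$, which comes out explicitly as any value below $c^{3}/4$.
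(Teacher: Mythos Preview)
Your proof is correct and essentially identical to the paper's own argument: both proceed by contradiction, observe that on the set where $\operatorname{Re}F\ge c\|F\|_{\operatorname{BMO}}$ one has $|F-F_I|\ge c/2\cdot\|F\|_{\operatorname{BMO}}$, and deduce the same explicit threshold $\varepsilon<c^{3}/4$. The only cosmetic difference is your preliminary normalization $\|F\|_{\operatorname{BMO}}=1$.
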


\begin{proof}
Denote $E=\{\xi\in I:\operatorname{Re}F(\xi)\ge c\|F\|_{\operatorname{BMO}(\partial\mathbb{D})}\}$.~We argue by contradiction.~Assume $\operatorname{Re}(F_{I})\le 2^{-1}c\|F\|_{\operatorname{BMO}(\partial\mathbb{D})}$. Then the estimate \eqref{3.99} gives 
$$
\frac{c^{2}}{4}\|F\|^{2}_{\operatorname{BMO}(\partial\mathbb{D})}\frac{m(E)}{m(I)} \le \varepsilon \|F\|^{2}_{\operatorname{BMO}(\partial\mathbb{D})}
$$
and we deduce $m(E)\le 4\varepsilon c^{-2}m(I)$. Since $m(E)\ge cm(I)$, picking $0<\varepsilon < c^{3}/4$ one gets a contradiction.
\end{proof}

Given an arc~$I$ centered at $\xi\in\partial\mathbb{D}$ we denote by~$z(I)$ the point~$z(I)=(1-m(I))\xi$. Conversely, given a point~$z\in\mathbb{D}\backslash\{0\}$ let $I(z)$ be the arc in the unit circle such that $z(I(z))=z$. Given an arc~$I\subset\partial\mathbb{D}$ and a number~$0<c<1/m(I)$, we denote by~$cI$ the arc which has the same center as~$I$ and with $m(cI)=cm(I)$. Given an arc~$I\subset\partial\mathbb{D}$ we consider its dyadic decomposition~$\mathcal{D}(I)=\bigcup\limits_{n\ge 0}\mathcal{D}_{n}(I)$ where $\mathcal{D}_{n}(I)$ is the set of the $2^{n}$~pairwise disjoint subarcs of~$I$ of Lebesgue measure~$2^{-n}m(I)$.

\begin{lemma}\label{lem3.5}
Let $f$ be an inner function with $f(0)=0$ which is not a rotation. Then there exist constants~$0< \varepsilon =\varepsilon (f)<1$ and $0< c=c(f) < 1$ such that the following statement holds. Let $0<\gamma<1$ be a constant with $1-\gamma\le \varepsilon$, let $M<N$ be positive integers, $\{ a_{n}:M\le n\le N\}$ be complex numbers, and $z\in\mathbb{D}$ with $|f^{M}(z)|<\varepsilon$. Then there exist pairwise disjoint arcs~$\{I_{k}\}$ with $c^{-1}I_{k}\subset c^{-1}I(z)$ for any~$k$, such that
\begin{enumerate}
\item[(a)] For any $k=1,2,\dotsc$, we have $ \tau (1-\gamma)\le 1-|f^{N}(z(I_{k}))|\le 1-\gamma,$
where $\tau >0$ is a universal constant.

\item[(b)] $\sum m(I_{k})\ge c m(I(z))$.

\item[(c)] For any $k=1,2,\dotsc$, we have
$$
\frac{1}{|I_{k}|} \int_{I_{k}}\operatorname{Re}\sum^{N}_{n=M}a_{n}f^{n}\,dm\ge c \left(\sum^{N}_{n=M}|a_{n}|^{2}\right)^{1/2}.
$$
\end{enumerate}
\end{lemma}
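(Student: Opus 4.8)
Write $F=\sum_{n=M}^{N}a_{n}f^{n}$ and $s=(\sum_{n=M}^{N}|a_{n}|^{2})^{1/2}$; by Corollary~\ref{coro1.3} applied to this finite sum, $\|F\|_{\operatorname{BMO}(\partial\mathbb{D})}$ is comparable to $s$ with constants depending only on $f$. The plan is to combine Lemmas~\ref{lem3.3} and \ref{lem3.4} with a stopping time on the dyadic decomposition of a Carleson box attached to $z$. First I would apply Lemma~\ref{lem3.3} to obtain a set $E\subset\partial\mathbb{D}$ with $w(z,E)\ge c_{1}=c_{1}(f)$ on which $\operatorname{Re}F\ge c_{1}s$. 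Since $P(z,\cdot)$ is comparable to $m(I(z))^{-1}$ on $I(z)$ and decays like $2^{-2j}m(I(z))^{-1}$ on the annular region $2^{j+1}I(z)\setminus 2^{j}I(z)$, the lower bound $w(z,E)\ge c_{1}$ localizes a fixed proportion of $E$: there are constants $C_{1}=C_{1}(f)>1$ and $c_{2}=c_{2}(f)>0$ such that, setting $J=C_{1}I(z)$, one has $m(E\cap J)\ge c_{2}m(J)$. When $|z|$ is bounded away from $1$ this is immediate from Harnack's inequality, so I would dispose of that (easier) regime separately, and otherwise assume $m(I(z))$ is small enough that $J$ is a proper arc.

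Next comes the stopping time. On $\mathcal{D}(J)$ select the maximal arcs $Q$ with $|f^{N}(z(Q))|\ge\gamma$. For almost every $\xi\in J$ the branch through $\xi$ stops, because $f^{N}$ is inner and hence $|f^{N}(z(Q))|\to 1$ as $Q$ shrinks to $\xi$; and it does not fire prematurely, since by Schwarz's Lemma $|f^{N}(z)|\le|f^{M}(z)|<\varepsilon$, the hyperbolic distance from $z$ to $z(Q)$ is bounded by a constant depending only on $f$ as long as $Q$ has generation below a fixed $g_{0}=g_{0}(f)$, and $f^{N}$ contracts the pseudohyperbolic metric; so for $\varepsilon$ small (depending only on $f$) no $Q$ of generation below $g_{0}$ is selected, and every selected arc $Q_{k}$ has generation at least $g_{0}$. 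Taking $g_{0}$ large and $c$ small, both depending only on $f$, then forces $c^{-1}Q_{k}\subset c^{-1}I(z)$. The $\{Q_{k}\}$ are pairwise disjoint and cover $J$ up to a null set, so $\sum_{k}m(Q_{k})=m(J)=C_{1}m(I(z))$. Part (a) follows from maximality: the parent $\widehat{Q}_{k}$ has $|f^{N}(z(\widehat{Q}_{k}))|<\gamma$, the parent--child hyperbolic distance is an absolute constant, and contraction of the pseudohyperbolic metric under $f^{N}$ converts $1-|f^{N}(z(\widehat{Q}_{k}))|>1-\gamma$ into $1-|f^{N}(z(Q_{k}))|\ge\tau(1-\gamma)$ for a universal $\tau$.

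To finish, pass to the subfamily $\mathcal{G}=\{k:m(E\cap Q_{k})\ge\tfrac{c_{2}}{2}m(Q_{k})\}$. A pigeonhole argument from $m(E\cap J)\ge c_{2}m(J)$ gives $\sum_{k\in\mathcal{G}}m(Q_{k})\ge\tfrac{c_{2}}{2}m(J)\ge c\,m(I(z))$, which is part (b). For part (c), fix $k\in\mathcal{G}$. Since $|f^{n}(z(Q_{k}))|\ge|f^{N}(z(Q_{k}))|\ge\gamma$ for $M\le n\le N$, part (a) of Lemma~\ref{lem2.1} gives $1-|f^{n}(z(Q_{k}))|\le c^{N-n}(1-\gamma)$ (this needs $\gamma\ge 1-\varepsilon$ to stay above the threshold of that lemma); hence Theorem~\ref{theo1.2} at the point $z(Q_{k})$, together with the fact that $P(z(Q_{k}),\xi)$ is bounded below by a multiple of $m(Q_{k})^{-1}$ on $Q_{k}$, bounds $m(Q_{k})^{-1}\int_{Q_{k}}|F-F(z(Q_{k}))|^{2}\,dm$ by $C(f)(1-\gamma)s^{2}$. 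Since $F_{Q_{k}}$ minimizes the $L^{2}(Q_{k})$ distance to constants, the mean oscillation of $F$ on $Q_{k}$ is at most $C(f)(1-\gamma)\|F\|_{\operatorname{BMO}(\partial\mathbb{D})}^{2}$, which for $\varepsilon$, and therefore $1-\gamma$, small enough lies below the threshold $\varepsilon'$ that Lemma~\ref{lem3.4} requires for density $c_{2}/2$. As $E\subset\{\operatorname{Re}F\ge c_{1}s\}\subset\{\operatorname{Re}F\ge c'\|F\|_{\operatorname{BMO}(\partial\mathbb{D})}\}$ with $c'=c'(f)$, and $m(E\cap Q_{k})\ge\tfrac{c_{2}}{2}m(Q_{k})$, Lemma~\ref{lem3.4} yields $\operatorname{Re}(F_{Q_{k}})\ge c''\|F\|_{\operatorname{BMO}(\partial\mathbb{D})}\ge c\,s$, which is part (c). Setting $I_{k}=Q_{k}$ for $k\in\mathcal{G}$ completes the proof.

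The main obstacle is the simultaneous, consistent choice of $\varepsilon=\varepsilon(f)$ and $c=c(f)$: $\varepsilon$ must be small enough that the stopping time does not fire before generation $g_{0}$ (small relative to $C_{1}$ and $g_{0}$), that the oscillation of $F$ on each $Q_{k}$ falls below the threshold of Lemma~\ref{lem3.4} (small relative to $c_{2}$ and the $\operatorname{BMO}$-comparability constants), and that $\gamma\ge 1-\varepsilon$ stays above the threshold of part (a) of Lemma~\ref{lem2.1}; while $g_{0}$ must be large and $c$ small, relative to $C_{1}$, to secure the dilation containment $c^{-1}I_{k}\subset c^{-1}I(z)$. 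These constraints are finitely many and can be ordered without circularity ($c_{1}\to C_{1},c_{2}\to\varepsilon';\ C_{1}\to g_{0}\to c;$ then $\varepsilon$ last), but checking this carefully, and handling the degenerate regime where $m(I(z))$ is not small, is where the real work lies.
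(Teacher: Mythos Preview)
Your proposal is correct and follows essentially the same route as the paper: localize the set $E$ from Lemma~\ref{lem3.3} to a fixed dilate of $I(z)$, run the dyadic stopping time $|f^{N}(z(Q))|\ge\gamma$, pass to the subfamily where $E$ has positive density, and deduce (c) via Theorem~\ref{theo1.2}, Lemma~\ref{lem2.1}(a) and Lemma~\ref{lem3.4}. The only cosmetic difference is that the paper obtains the smallness of the selected arcs (and hence $c^{-1}I_k\subset c^{-1}I(z)$) directly from the Schwarz-lemma bound $\rho(z_k,z)\ge\rho(\gamma,\varepsilon)\to 1$ as $\varepsilon\to 0$, rather than through your minimum-generation argument.
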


\begin{proof}
Denote $F=\sum\limits_{n=M}^{N}a_{n}f^{n}$. Let $0<\varepsilon_{0}=\varepsilon_{0}(f)<1$, $0<c_{0}=c_{0}(f)<1$ be the constants given by Lemma~\ref{lem3.3}. Let $0<\varepsilon<\varepsilon_{0}(f)/2$ be a (small) constant to be fixed later. Assume $|f^{M}(z)|<\varepsilon$. Apply Lemma~\ref{lem3.3} to find a set~$E\subset\partial\mathbb{D}$ with $w(z,E)\ge c_{0}$ such that
\begin{equation}\label{eq3.13}
\operatorname{Re}F(\xi)\ge c_{0}\left(\sum^{N}_{n=M}|a_{n}|^{2}\right)^{1/2},\quad \xi\in E.
\end{equation}
Reducing $E$ if necessary, we can assume that there exists a constant~$c_{1}=c_{1}(c_{0})>0$ such that $E\subset c_{1}^{-1}I(z)$ and $m(E) \ge c_{1} m(I(z))$. Let $I_{0}=c_{1}^{-1}I(z)$ and consider its dyadic decomposition~$\mathcal{D}(I_{0})$. Fix $0<\gamma<1$ with  $1-\gamma<\varepsilon$. Let $\mathcal{G}_{0}=\{I_{k}: k\ge 1\}$ be the collection of maximal dyadic arcs in~$\mathcal{D}(I_{0})$ 
such that
\begin{equation}\label{eq3.14}
|f^{N}(z_{k})| \geq \gamma.
\end{equation}
Here $z_{k}=z(I_{k})$, $k\ge 1$. By maximality, the arcs in the collection $\mathcal{G}_{0}$ are pairwise disjoint. Note also that by maximality and Schwarz's Lemma, there exists a universal constant~$\tau >0$ such that
\begin{equation}\label{eq3.15}
\tau (1-	\gamma)\le 1-|f^{N}(z_{k})|\le 1-\gamma,\quad k\ge 1.
\end{equation}
This is the statement in (a). Since $f^{N}$ is inner, the union of $\{I_{k}:k\ge 1\}$ covers almost every point of~$I_{0}$. Consider the subcollection~$\mathcal{G}_{1} \subset\mathcal{G}_{0}$ of those dyadic arcs~$I_{k}\in\mathcal{G}_{0}$ such that
\begin{equation}\label{eq3.16}
m(E\cap I_{k})\ge c_{2} m(I_{k}).
\end{equation}
Here $c_{2}>0$ is a small constant to be fixed later. We will show that the subcollection~$\mathcal{G}_{1} $ satisfies the conditions (b) and (c) in the statement. Let $\mathcal{L}$ be the collection of indices~$k$ such that $I_{k}\in\mathcal{G}_{1}$. Note that
$$
c_{1}m(I(z))\le m(E)\le \sum_{k\notin\mathcal{L}}  m(E\cap I_{k}) +\sum_{k\in\mathcal{L}}m(I_{k}).
$$
If $k\notin \mathcal{L}$, we have $m(E\cap I_{k})\le c_{2}m(I_{k})$. Hence the first sum is bounded by~$c_{2}c_{1}^{-1} m(I(z))$. Pick $c_{2}>0$ sufficiently small so that $c_{1}-c_{2}c_{1}^{-1}>c_{1}/2$ to deduce
\begin{equation}\label{eq3.17}
\sum_{k\in \mathcal{L}} m( I_{k} ) \ge \frac{c_{1}}{2} m(I(z)).
\end{equation}
Choosing $0<c<c_1 / 2$, the estimate~\eqref{eq3.17} gives the statement in (b). Note that $|f^{N}(z)|\le |f^{M}(z)|<\varepsilon$ and $|f^{N}(z_{k})|\ge \gamma$,  for any positive integer $k$. By Schwarz's Lemma $\rho(z_{k},z)\ge \rho(\gamma,\varepsilon)$. Since $0<1-\gamma<\varepsilon$, taking $\varepsilon>0$ sufficiently small we deduce that $\rho (\gamma,\varepsilon)$ is close to~$1$ and then $m(I_{k}) \le c_3(\varepsilon) m(I_{0})$ for any $k \geq 1$, where $c_3 (\varepsilon)$ is tends to~$0$ as $\varepsilon$ tends to~$0$. If $c>0$ and  $\varepsilon >0$ are taken sufficiently small we have $c^{-1} I_k \subset c^{-1} I(z)$ for any $k\geq 1$. 

By Theorem~\ref{theo1.2}, there exists a constant~$c_4 = c_4 (f)>0$ such that for any~$k\ge 1$ we have
$$
\frac{1}{m(I_{k})} \int_{I_{k}}\left| F(\xi)- F_{I_k} \right|^{2}\,dm(\xi)\le c_4 \sum^{N}_{n=M}|a_{n}|^{2}(1-|f^{n}(z_{k})|^{2}).
$$
By part~(a) of Lemma~\ref{lem2.1}, there exists a constant~$0<c_{5}=c_{5}(\gamma,f)<1$ such that $1-|f^{n}(z_{k})|\le c_{5}^{N-n} (1-|f^{N}(z_{k})|)$, $n\le N$. Since $1-|f^{N}(z_{k})|\le 1-\gamma$, we obtain
$$
\frac{1}{m(I_{k})} \int_{I_{k}}\left|F(\xi)- F_{I_k}\right|^{2}\,dm(\xi)\le
2c_4 (1-\gamma)\sum^{N}_{n=M}|a_{n}|^{2}c_{5}^{N-n}.
$$
Recall that $m(E\cap I_{k})\ge c_{2} m(I_{k})$ if $k\in\mathcal{L}$. Then if $1-\gamma$ is sufficiently small, by Lemma~\ref{lem3.4}, the estimate~\eqref{eq3.13} implies that
$$
\frac{1}{m(I_{k})} \int_{I_{k}}\operatorname{Re}F\,dm \ge \frac{c_{0}}{2}\left(\sum^{N}_{n=M}|a_{n}|^{2}\right)^{1/2}.
$$
Choosing $0<c<c_0 / 2$, last estimate gives the statement in (c).This finishes the proof. 
\end{proof}

%
Our next auxiliary result is the building block of the main construction in the proof of Theorem~\ref{theo1.5}.

\begin{lemma}\label{lem3.7}
Let $f$ be an inner function with $f(0)=0$ which is not a rotation. Then there exist constants~$0<\varepsilon=\varepsilon(f)<1$ and $0<c=c(f)<1$ such that the following statement holds. Let $0<\gamma<1$ be a constant with $1-\gamma\le \varepsilon$, let $\{a_{n}\}$ be a sequence of complex numbers, let $M_{1}<N_{1}<M_{2}<N_{2}$ be positive integers and
$$
F_{i}=\sum^{N_{i}}_{n=M_{i}}a_{n}f^{n},\quad i=1,2.
$$
Let $I\subset \partial\mathbb{D}$ be an arc such that $c(1-\gamma)< 1-|f^{N_{1}}(z(I))|\le 1-\gamma$ and $|f^{M_{2}}(z(I))|\le \varepsilon$. Then there exists an arc~$J$ with $c^{-1}J\subset c^{-1}I$, such that
\begin{gather}
c (1-\gamma)\le 1-|f^{N_{2}}(z(J))|\le 1-\gamma,\label{eq3.18}\\
\frac{1}{|J|} \int_{J}\operatorname{Re}F_{2}\,dm \ge c\left(\sum^{N_{2}}_{n=M_{2}}|a_{n}|^{2}\right)^{1/2},\label{eq3.19}\\
\frac{1}{|J|} \int_{J}\operatorname{Re}F_{1}\,dm \ge \frac{1}{|I|}\int_{I}\operatorname{Re}F_{1}\,dm-
c^{-1}(1-\gamma)^{1/2}
\left(\sum^{N_{1}}_{n=M_{1}}|a_{n}|^{2}t^{N_{1}-n}\right)^{1/2},\label{eq3.20}
\end{gather}
where $t=(1+\gamma)^{-1} (1+|f'(0)|\gamma)<1$.
\end{lemma}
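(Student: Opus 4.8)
The plan is to apply Lemma~\ref{lem3.5} to the block $F_{2}$ at the point $z=z(I)$ and then, among the arcs it produces, to select one where the average of $\operatorname{Re}F_{1}$ has not dropped too much. Let $0<\varepsilon_{0}=\varepsilon_{0}(f)<1$, $0<c_{0}=c_{0}(f)<1$ and the universal $\tau>0$ be the constants of Lemma~\ref{lem3.5}. Since $|f^{M_{2}}(z(I))|\le\varepsilon$ and $1-\gamma\le\varepsilon$, taking $\varepsilon$ smaller than $\varepsilon_{0}$ we may apply Lemma~\ref{lem3.5} with $M=M_{2}$, $N=N_{2}$ and $z=z(I)$, noting that $I(z(I))=I$. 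This yields pairwise disjoint arcs $\{J_{k}\}$ with $c_{0}^{-1}J_{k}\subset c_{0}^{-1}I$ for each $k$, with $\sum_{k}m(J_{k})\ge c_{0}\,m(I)$, with $\tau(1-\gamma)\le 1-|f^{N_{2}}(z(J_{k}))|\le 1-\gamma$, and with
$$
\frac{1}{m(J_{k})}\int_{J_{k}}\operatorname{Re}F_{2}\,dm\ge c_{0}\Bigl(\sum_{n=M_{2}}^{N_{2}}|a_{n}|^{2}\Bigr)^{1/2}.
$$
Thus every $J_{k}$ already satisfies \eqref{eq3.18} (as soon as $c\le\tau$) and \eqref{eq3.19} (as soon as $c\le c_{0}$); moreover, since $c\le c_{0}$ forces $c^{-1}\ge c_{0}^{-1}$, the inclusion $c_{0}^{-1}J_{k}\subset c_{0}^{-1}I$ upgrades to $c^{-1}J_{k}\subset c^{-1}I$. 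It only remains to locate one $J_{k}$, which we shall call $J$, satisfying also \eqref{eq3.20}.

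Next I would control the oscillation of $F_{1}$ about $z(I)$. Since $f(0)=0$, Schwarz's Lemma makes the moduli $|f^{n}(z(I))|$ nonincreasing in $n$, so the hypothesis $1-|f^{N_{1}}(z(I))|\le 1-\gamma$ gives $|f^{n}(z(I))|\ge\gamma$ for $n\le N_{1}$. Applying part~(a) of Lemma~\ref{lem2.1} with $r=\gamma$ at the points $f^{n-1}(z(I))$, whose associated constant is exactly $t=(1+\gamma)^{-1}(1+|f'(0)|\gamma)$, and iterating, we obtain $1-|f^{n}(z(I))|\le t^{N_{1}-n}(1-\gamma)$ for $M_{1}\le n\le N_{1}$. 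Feeding this into Theorem~\ref{theo1.2} applied to $F_{1}$ at the point $z(I)$ (extending the coefficients by zero outside $[M_{1},N_{1}]$) yields
$$
\sigma^{2}:=\int_{\partial\mathbb{D}}|F_{1}(\xi)-F_{1}(z(I))|^{2}P(z(I),\xi)\,dm(\xi)\le 2C(1-\gamma)\sum_{n=M_{1}}^{N_{1}}|a_{n}|^{2}t^{N_{1}-n},
$$
where $C=C(f)$ is the constant of Theorem~\ref{theo1.2}. The same bound holds with $F_{1}$ replaced by $\operatorname{Re}F_{1}$, since $\operatorname{Re}F_{1}(z(I))$ is the Poisson extension of the boundary function $\operatorname{Re}F_{1}$ and $|\operatorname{Re}w|\le|w|$.

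For the selection, write $U=\bigcup_{k}J_{k}$ and $a=\operatorname{Re}F_{1}(z(I))$. There is a constant $c_{6}=c_{6}(f)>0$ with $P(z(I),\xi)\ge c_{6}/m(I)$ for $\xi\in c_{0}^{-1}I\supset U$; together with $m(U)\ge c_{0}m(I)$ and Cauchy--Schwarz this gives
$$
\frac{1}{m(U)}\int_{U}|\operatorname{Re}F_{1}-a|\,dm\le\Bigl(\frac{1}{m(U)}\int_{U}|\operatorname{Re}F_{1}-a|^{2}\,dm\Bigr)^{1/2}\le(c_{0}c_{6})^{-1/2}\sigma .
$$
Hence the average $m(U)^{-1}\int_{U}\operatorname{Re}F_{1}\,dm$, which is a weighted average of the numbers $m(J_{k})^{-1}\int_{J_{k}}\operatorname{Re}F_{1}\,dm$, is at least $a-(c_{0}c_{6})^{-1/2}\sigma$, so some $J_{k}=:J$ has $m(J)^{-1}\int_{J}\operatorname{Re}F_{1}\,dm\ge a-(c_{0}c_{6})^{-1/2}\sigma$. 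On the other hand, $P(z(I),\xi)\ge\tfrac49 m(I)^{-1}$ on $I$ itself, so the same estimate yields $m(I)^{-1}\int_{I}\operatorname{Re}F_{1}\,dm\le a+\tfrac32\sigma$. Combining the two and using the bound on $\sigma$ from the previous paragraph,
$$
\frac{1}{m(J)}\int_{J}\operatorname{Re}F_{1}\,dm\ge\frac{1}{m(I)}\int_{I}\operatorname{Re}F_{1}\,dm-K(2C)^{1/2}(1-\gamma)^{1/2}\Bigl(\sum_{n=M_{1}}^{N_{1}}|a_{n}|^{2}t^{N_{1}-n}\Bigr)^{1/2},
$$
where $K=K(f)=\tfrac32+(c_{0}c_{6})^{-1/2}$. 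This is \eqref{eq3.20} as soon as $c\le(K(2C)^{1/2})^{-1}$. It therefore suffices to fix $\varepsilon=\varepsilon(f)$ to be a small enough fraction of $\varepsilon_{0}$ and $c=c(f)=\min\{c_{0},\tau,(K(2C)^{1/2})^{-1}\}$.

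The main obstacle is the bookkeeping of constants rather than any single estimate. The point that is easy to get wrong is that the error constant $K(2C)^{1/2}$ in \eqref{eq3.20} depends only on $f$ --- through the constants of Lemma~\ref{lem3.5} and Theorem~\ref{theo1.2} --- and \emph{not} on the final constant $c$ being constructed, so that shrinking $c$ makes $c^{-1}$ large enough to absorb it; note also that the precise geometric weight $t^{N_{1}-n}$ appears only because the constant in part~(a) of Lemma~\ref{lem2.1} evaluated at $r=\gamma$ equals exactly $t$, which is what couples the decay of $1-|f^{n}(z(I))|$ to the quantity in \eqref{eq3.20}.
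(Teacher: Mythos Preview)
Your proposal is correct and follows essentially the same route as the paper: apply Lemma~\ref{lem3.5} to $F_{2}$ at $z(I)$, bound the oscillation of $\operatorname{Re}F_{1}$ via Theorem~\ref{theo1.2} together with the decay $1-|f^{n}(z(I))|\le t^{N_{1}-n}(1-\gamma)$ coming from Lemma~\ref{lem2.1}(a), and then select a good $J_{k}$. The only difference is in the selection step: the paper uses a Chebyshev argument (the collection $\mathcal{G}$ of arcs where the average of $\operatorname{Re}F_{1}$ drops by more than a threshold $K(1-\gamma)^{1/2}(\cdots)^{1/2}$ has total measure $\le c_{5}^{1/2}K^{-1}m(I)$, so for $K$ large some $J_{k}\notin\mathcal{G}$ survives), whereas you use the cleaner averaging observation that $m(U)^{-1}\int_{U}\operatorname{Re}F_{1}\,dm$ is a convex combination of the $J_{k}$-averages and is itself within $(c_{0}c_{6})^{-1/2}\sigma$ of $\operatorname{Re}F_{1}(z(I))$, hence at least one $J_{k}$ works. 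Your variant is slightly more direct; the paper's variant gives a bit more (a positive proportion of the $J_{k}$ are good), but that extra information is not used downstream.
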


\begin{proof}
Let $\varepsilon_0>0$ and $c_0 >0$ be the constants appearing in Lemma~\ref{lem3.5}. Let $0 < \varepsilon < \varepsilon_0$ be a constant to be fixed later. Let $0<\gamma<1$ with $1-\gamma<\varepsilon$. Apply Lemma~\ref{lem3.5} to the function $F_2$ to find arcs~$J_{k}$ with $c_{0}^{-1}J_{k}\subset c_{0}^{-1}I$, such that $\tau (1-\gamma)\le 1-|f^{N_{2}}(z(J_{k}))|\le 1-\gamma$, $\sum m(J_{k})\ge c_{0} m(I)$ and
$$
\frac{1}{|J_{k}|}\int_{J_{k}}\operatorname{Re} F_{2}\,dm\ge c_{0}\left( \sum^{N_{2}}_{n=M_{2}}|a_{n}|^{2}\right)^{1/2}.
$$
Here $\tau>0$ is the universal constant given by part (a) of Lemma~\ref{lem3.5}. The constant $c$ will satisfy $0<c<c_0$ and the arc $J$ will be one of the arcs of the family $\{J_k \}$. Hence estimates \eqref{eq3.18} and \eqref{eq3.19} will follow from the construction. Theorem~\ref{theo1.2} gives that there exists a constant $c_1=c_1(f)>0$ such that 
\begin{equation}\label{3.100}
\frac{1}{c_{0}^{-1}m(I)}\int_{c_{0}^{-1}I}|\mathrm{Re}\,F_{1}- \mathrm{Re}\,F_{1} (z(c_{0}^{-1} I))|^{2}\,dm\le c_1 \sum^{N_{1}}_{n=M_{1}}|a_{n}|^{2} (1-|f^{n}(z(c_{0}^{-1} I))|^{2}).
\end{equation}
Note that 
\begin{equation*}
\begin{split}
| (\operatorname{Re}F_{1})_{I} -  \mathrm{Re}\,F_{1} (z(c_{0}^{-1} I)) | &\leq \frac{1}{m(I)} \int_I |\mathrm{Re}\,F_{1}- \mathrm{Re}\,F_{1} (z(c_{0}^{-1} I))| dm  \\
&\leq \left(  \frac{1}{ m(I)}  \int_{ I} |\mathrm{Re}\,F_{1}- \mathrm{Re}\,F_{1} (z(c_{0}^{-1} I))|^2 dm 
\right)^{1/2}
\end{split}
\end{equation*}
Applying \eqref{3.100} we find a constant $c_2 = c_2 (f, c_0) >0$ such that
\begin{equation}\label{3.101}
  \frac{1}{m(I)}\int_{c_{0}^{-1}I}|\mathrm{Re}\,F_{1}-  (\operatorname{Re}F_{1})_{I} |^{2}\,dm\le c_2 \sum^{N_{1}}_{n=M_{1}}|a_{n}|^{2} (1-|f^{n}(z(c_{0}^{-1} I))|^{2}).  
\end{equation}
By Schwarz's Lemma, there exists a constant $c_3 = c_3 (c_0) >0$ such that  $1-|f^{n}(z(c_{0}^{-1} I ))|^{2} \leq c_3 (1-|f^{n}(z(I))|^{2})$ for any $n \geq 1$. Since $1-|f^{N_{1}}(z(I))|\le 1-\gamma$, part~(a) of Lemma~\ref{lem2.1} provides a constant~$0<c_4<1$ such that $1-|f^{n}(z(I))|\le c_4^{N_{1}-n}(1-\gamma)$, $n\le N_{1}$. Actually, according to Lemma~\ref{lem2.1} one can take $c_4=(1+\gamma)^{-1}(1+|f'(0)|\gamma)$. Applying \eqref{3.101} we obtain a constant $c_5 = c_5 (c_0 , f) >0$ such that
\begin{equation}\label{3.102}
  \frac{1}{m(I)}\int_{c_{0}^{-1}I}|\mathrm{Re}\,F_{1}-  (\operatorname{Re}F_{1})_{I} |^{2}\,dm\le c_5 (1- \gamma) \sum^{N_{1}}_{n=M_{1}}|a_{n}|^{2}  c_4^{N_{1}-n} .  
\end{equation}
Let $K>0$ be a large constant to be fixed later. Let $\mathcal{G}$ be the subcollection of those arcs~$J_{k}$ such that
$$
(\operatorname{Re}F_{1})_{J_{k}}\le (\operatorname{Re}F_{1})_{I}-K(1-\gamma)^{1/2} \left(\sum^{N_{1}}_{n=M_{1}}|a_{n}|^{2} c_4^{N_{1}-n}\right)^{1/2}.
$$
Note that for any $J_k \in \mathcal{G}$ we have 
$$
\frac{1}{m(J_k)} \int_{J_k} | \operatorname{Re}F_{1}  - (\operatorname{Re}F_{1})_{I}| dm \geq |(\operatorname{Re}F_{1})_{J_k} - (\operatorname{Re}F_{1})_{I}| \geq K(1-\gamma)^{1/2} \left(\sum^{N_{1}}_{n=M_{1}}|a_{n}|^{2} c_4^{N_{1}-n}\right)^{1/2} . 
$$
Adding over $k$, we obtain
$$
\sum_{J_{k}\in\mathcal{G}} \int_{J_k} | \operatorname{Re}F_{1}  - (\operatorname{Re}F_{1})_{I}| dm \geq  K(1-\gamma)^{1/2} \left(\sum^{N_{1}}_{n=M_{1}}|a_{n}|^{2} c_4^{N_{1}-n}\right)^{1/2} \sum_{J_{k}\in\mathcal{G}}m(J_{k}) . 
$$
Applying \eqref{3.102} we deduce 
$$
\sum_{J_{k}\in\mathcal{G}}m(J_{k})\le \frac{c_5^{1/2}}{K} m(I).
$$
Since $\sum m(J_{k})\ge c_{0}m(I)$, taking $K>0$ large enough so that $c_5^{1/2} K^{-1} < c_0 / 2$, we deduce 
$$
\sum_{J_{k}\notin\mathcal{G}}m(J_{k})\ge \frac{c_{0}}{2}m(I)
$$
 and one can take as $J$ in the statement, any of the arcs~$J_{k}\notin\mathcal{G}$.
\end{proof}

We are now ready to prove Theorem\ref{theo1.5}. 

\begin{proof}[Proof of Theorem~\ref{theo1.5}]

Since $\sum a_{n}f^{n}$ converges at a set of positive measure, Theorem~\ref{theo1.1} gives that $\sum |a_{n}|^{2}<\infty$. Hence the function~$F=\sum a_{n}f^{n}$ is in~$\operatorname{BMOA}$. The plan of the proof is to find constants~$A,B>0$, a sequence of arcs~$\{I_{k}\}$ contained in~$I$ and a sequence of positive integers~$\{M_{k}\}$ tending to infinity such that
\begin{equation}\label{eq3.22}
\frac{1}{|I_{k}|} \int_{I_{k}}\operatorname{Re}F\,dm\ge A\sum^{M_{k}}_{n=1}|a_{n}|-B.
\end{equation}
It is clear that \eqref{eq3.22} finishes the proof. The construction of the arcs~$\{I_{k}\}$ and the sequence~$\{M_{k}\}$ is performed inductively. We start using an idea of Weiss (\cite{We}). Let $T'<T$ be two (large) positive integers to be fixed later such that $T/T'$ is integer. Split the sum~$F=\sum a_{n}f^{n}$ into blocks of length~$T$, that is, $F=\sum\limits_{k\ge 0}G_{k}$, where
$$
G_{k}=\sum^{T-1}_{n=0}a_{kT+n}f^{kT+n},\quad k=0,1,2,\dotsc
$$
Next split $G_{2k}$ into successive blocks of length~$T'$ and pick the subblock for which the sum of the modulus of the coefficients is the least. In other words, pick $\mathcal{G}_{k}$ a subset of $T'$ consecutive integers in the interval~$[2kT,(2k+1)T)$ such that
$$
\sum_{n\in \mathcal{G}_{k}}|a_{n}|\le \sum_{\ell \in \mathcal{G}}|a_{\ell}|
$$
for any subset~$\mathcal{G}$ of $T'$ consecutive integers in~$[2kT,(2k+1)T)$. Since the number of disjoint subblocks of length $T'$ in~$[2kT,(2k+1)T)$ is $T/T'$ we deduce
\begin{equation}\label{eq3.23}
\sum_{n\in \mathcal{G}_{k}}|a_{n}| \le \frac{T'}{T}\sum^{T-1}_{n=0}|a_{2kT+n}|.
\end{equation}
Each set of indices~$\mathcal{G}_{k}$, $k\ge 1$, has $T'$~elements and the corresponding block
$$
S_{k}=\sum_{n\in\mathcal{G}_{k}}a_{n}f^{n}
$$
will be called a short block. The long blocks are defined as the blocks between two short blocks. More concretely
$$
L_{1}=\sum^{N_{1}}_{n=0}a_{n}f^{n},
$$
where $N_{1}+1$ is the first index in $\mathcal{G}_{1}$ and
$$
L_{k}=\sum^{N_{k}}_{n=M_{k}}a_{n}f^{n},\quad k>1,
$$
where $M_{k}-1$ is the largest index in~$\mathcal{G}_{k-1}$ and $N_{k}+1$ is the smallest index in~$\mathcal{G}_{k}$. Note that each short block has $T'$~terms while the number of terms in a long block is between $T$ and $3T$. Note also that if $\mathcal{L}_{k}=\{n\in\mathbb{Z}: M_{k}\le n\le N_{k}\}$ denotes the set of indices appearing in the long block~$L_{k}$, then \eqref{eq3.23} implies
\begin{equation}\label{eq3.24}
\sum^{L}_{k=1}\sum_{n\in \mathcal{L}_{k}}|a_{n}|\ge \left(1-\frac{T'}{T}\right) \sum^{N_{L}}_{n=1}|a_{n}|,\quad L=1,2,\dotsc
\end{equation}
The idea is that \eqref{eq3.24} will imply that the short blocks are irrelevant and the construction of the arcs~$\{I_{k}\}$ and the sequence of integers~$\{M_{k}\}$ verifying \eqref{eq3.22} will depend on the long blocks. Moreover, the fact that two long blocks are separated by a short block will provide a sort of independence between the long blocks.

Let $s_{k}$ denote the $\ell^{2}$-norm of the coefficients in the long block~$L_{k}$, that is,
$$
s_{k}^{2}=\sum_{n\in \mathcal{L}_{k}} |a_{n}|^{2},\quad k=1,2,\dotsc
$$ 
Let $\varepsilon=\varepsilon(f)>0$ and $c=c(f)>0$ be the constants given by Lemma~\ref{lem3.7}. Let $0<\gamma=\gamma(f)<1$ be a constant to be fixed later satisfying $1-\gamma<\varepsilon$. Let $D(0,R)$ denote the disc centered at the origin of radius $R$. By the Denjoy--Wolff Theorem, $f^{n}$ tends to~$0$ uniformly on compact sets of~$\mathbb{D}$. The integer~$T'$ will be taken large enough so that
\begin{equation}\label{eq3.25}
f^{n} ( D(0,  1- c (1- \gamma) )) \subset D(0, \varepsilon), \quad n \geq T' .  
\end{equation}
Fix an arc $I^{*}\subset I$ such that $c^{-1}I^{*}\subset I$. The construction starts with the first long block~$L_{k_{0}}$ such that $|f^{M_{k_{0}}}(z(I^{*}))|<\varepsilon$. Without loss of generality we can assume that the constant $c$ is smaller than the constants appearing in Lemma~\ref{lem3.5}. Apply Lemma~\ref{lem3.5} to find an arc~$I_{k_{0}} $ with $c^{-1}I_{k_{0}}\subset c^{-1}I^{*}\subset I$ such that
\begin{gather*}
c(1-\gamma)\le 1-|f^{N_{k_{0}}} (z(I_{k_{0}}))|\le 1-\gamma,\\*[5pt]
\frac{1}{m(I_{k_{0}})}\int_{I_{k_{0}}}\operatorname{Re} L_{k_{0}}\,dm\ge c s_{k_{0}}.
\end{gather*}
Assume by induction that we have constructed arcs~$I_{k_{0}},I_{k_{0}+1},\dotsc, I_{k}$, $k \geq k_{0}$, with $c^{-1}I_{j}\subset c^{-1}I_{j-1}$, $j=k_{0}+1,\dotsc, k$, such that the following two conditions hold
\begin{gather}
c(1-\gamma)\le 1-|f^{N_{k}}(z(I_{k}))|\le 1-\gamma,\label{eq3.26}\\*[5pt]
\frac{1}{m(I_{k})}\int_{I_{k}}\operatorname{Re}\left(\sum^{k}_{j=k_{0}}L_{j}\right)\,dm\ge c\sum^{k}_{j=k_{0}}s_{j}
-c^{-1}(1-\gamma)^{1/2}\sum^{k-1}_{\ell=k_{0}}\left(\sum^{\ell}_{j=k_{0}}t^{N_{\ell}-N_{j}}s_{j}^{2}\right)^{1/2}.\label{eq3.27}
\end{gather}
Here $0<t<1$ is the constant appearing in Lemma~\ref{lem3.7} and when $k=k_0$, we replace the right hand side term of \eqref{eq3.27} by $c s_{k_0}$. Recall that two different long blocks are separated by a short block which has $T'$ terms. Hence the estimates~\eqref{eq3.25} and \eqref{eq3.26} give that $|f^{M_{k+1}}(z(I_{k}))|\le \varepsilon$. Apply Lemma~\ref{lem3.7} to $F_{1}=\sum\limits_{j=k_{0}}^{k}L_{j}$ and $F_{2}=L_{k+1}$, to find an arc~$I_{k+1}$  with $c^{-1}I_{k+1}\subset c^{-1}I_{k}$ such that
\begin{gather}
c (1-\gamma)\le 1-|f^{N_{k+1}}(z(I_{k+1}))|\le 1-\gamma,\nonumber\\*[5pt]
\frac{1}{m(I_{k+1})}\int_{I_{k+1}}\operatorname{Re}L_{k+1}\,dm\ge c s_{k+1},\label{eq3.28}\\*[5pt]
\frac{1}{m(I_{k+1})}\int_{I_{k+1}}\operatorname{Re}\sum^{k}_{j=k_{0}}L_{j}\,dm\ge \frac{1}{m(I_{k})}\int_{I_{k}}
\operatorname{Re}\sum^{k}_{j=k_0}L_{j}\,dm-
c^{-1}(1-\gamma)^{1/2}\left(\sum^{k}_{j=k_{0}}t^{N_{k}-N_{j}}s_{j}^{2}\right)^{1/2}.\label{eq3.29}
\end{gather}
Then \eqref{eq3.29} and the induction assumption~\eqref{eq3.27} give
$$
\frac{1}{m(I_{k+1})}\int_{I_{k+1}}\operatorname{Re}\left(\sum^{k}_{j=k_{0}}L_{j}\right)\,dm\ge c \sum^{k}_{j=k_{0}}s_{j}-c^{-1}(1-\gamma)^{1/2}\sum^{k}_{\ell=k_{0}}\left(\sum^{\ell}_{j=k_{0}}t^{N_{\ell}-N_{j}}s_{j}^{2}\right)^{1/2}.
$$
Applying \eqref{eq3.28} we deduce
$$
\frac{1}{m(I_{k+1})}\int_{I_{k+1}}\operatorname{Re}\left(\sum^{k+1}_{j=k_{0}}L_{j}\right)\,dm\ge c \sum^{k+1}_{j=k_{0}}s_{j}- c^{-1}(1-\gamma)^{1/2}\sum^{k}_{\ell=k_{0}}\left(\sum^{\ell}_{j=k_{0}}t^{N_{\ell}-N_{j}}s_{j}^{2}\right)^{1/2}.
$$
This concludes the inductive step.

Note that for any $k>k_{0}$, we have
$$
\sum^{k-1}_{\ell=k_{0}}\left(\sum^{\ell}_{j=k_{0}}t^{N_{\ell}-N_{j}}s_{j}^{2}\right)^{1/2}\le \sum^{k-1}_{\ell=k_{0}}\sum^{\ell}_{j=k_{0}}t^{(N_{\ell}-N_{j})/2}s_{j}=
\sum^{k-1}_{j=k_{0}}s_{j }\sum^{k-1}_{\ell=j}t^{(N_{\ell}-N_{j})/2}\le (1-t^{1/2})^{-1}
\sum^{k-1}_{j=k_{0}}s_{j}.
$$
Pick $0< \gamma< 1$ sufficiently close to~$1$ so that $c^{-1}(1-\gamma)^{1/2}(1-t^{1/2})^{-1}\le c/2$. The estimate~\eqref{eq3.27} gives
\begin{equation}\label{eq3.30}
\frac{1}{m(I_{k})}\int_{I_{k}}\operatorname{Re}\left(\sum^{k}_{j=k_{0}}L_{j}\right)\,dm\ge \frac{c}{2}\sum^{k}_{j=k_{0}}s_{j}.
\end{equation}
Recall that each long block has at most $3T$~terms. Hence Cauchy-Schwarz's inequality gives 
$$
\sum_{n\in \mathcal{L}_{j}}|a_{n}|\le s_{j}(3T)^{1/2},\quad j=1,2,\dotsc
$$
Then apply \eqref{eq3.30} to deduce
\begin{equation}\label{eq3.31}
\frac{1}{m(I_{k})} \int_{I_{k}}\operatorname{Re} \sum^{k}_{j=k_{0}} (L_{j}+S_{j})\,dm\ge \frac{c}{2} (3T)^{-1/2}\sum^{k}_{j=k_{0}}\sum_{n\in\mathcal{L}_{j}}|a_{n}|-\sum^{k}_{j=k_{0}}\sum_{n\in\mathcal{G}_{j}}|a_{n}|,\quad k\ge k_{0}.
\end{equation}
Let $\mathcal{A}_{k}$ be the set of indices appearing in $\sum\limits_{j=k_{0}}^{k}(L_{j}+S_{j})$, that  is, $\mathcal{A}_{k}=\bigcup\limits_{j=k_{0}}^{k}(\mathcal{L}_{j}\cup\mathcal{G}_{j})$, $k\ge k_{0}$. Applying \eqref{eq3.31}, \eqref{eq3.23} and \eqref{eq3.24} we obtain
$$
\frac{1}{m(I_{k})}\int_{I_{k}}\operatorname{Re} \sum^{k}_{j=k_{0}} (L_{j}+S_{j})\,dm \ge \frac{c}{2} (3T)^{-1/2}\left(1-\frac{2T'}{T}\right)\sum_{n\in\mathcal{A}_{k}}|a_{n}|-\frac{2T'}{T}\sum_{n\in\mathcal{A}_{k}}|a_{n}|.
$$
We now choose $T$ so that $T'T^{-1/2}$ is sufficiently small so that
$$
\frac{c}{2} (3T)^{-1/2}\left(1-\frac{2T'}{T}\right)-\frac{2T'}{T}\ge \frac{c}{4}T^{-1/2}.
$$
Then
\begin{equation}\label{eq3.32}
\frac{1}{m(I_{k})}\int_{I_{k}}\operatorname{Re} \sum^{k}_{j=k_{0}}(L_{j}+S_{j})\,dm\ge \frac{c}{4} T^{-1/2}\sum_{n\in\mathcal{A}_{k}}|a_{n}|.
\end{equation}
As observed previously, $|f^{M_{k+1}}(z(I_{k}))|\le \varepsilon$. By Corollary~\ref{coro2.2} there exists a constant~$0<c_{1}<1$ such that $|f^{n}(z(I_{k}))|\le c_{1}^{n-M_{k+1}}$, $n\ge M_{k+1}$. Then
$$
\sum_{n\ge M_{k+1}}|a_{n}| |f^{n}(z(I_{k}))|\le (1-c_{1}^2)^{-1/2}\left( \sum_{n\ge M_{k+1}}|a_{n}|^{2}\right)^{1/2}.
$$
Since there exists a constant~$c_2 = c_2(f)>0$ such that
$$
\left|\frac{1}{m(I_{k})}\int_{I_{k}}\sum_{n\ge M_{k+1}}a_{n}f^{n}\,dm-\sum_{n\ge M_{k+1}}a_{n}f^{n}(z(I_{k}))\right|\le c_2 \left(\sum_{n\ge M_{k+1}}|a_{n}|^{2}\right)^{1/2},
$$
the estimate \eqref{eq3.32} gives
$$
\frac{1}{m(I_{k})}\int_{I_{k}}\operatorname{Re}\sum^{\infty}_{n=1}a_{n}f^{n}\,dm\ge \frac{c}{4} T^{-1/2}\sum_{n\in\mathcal{A}_{k}}|a_{n}|-(c_2 +(1-c_{1}^2)^{-1/2})\left(\sum_{n\ge 1}|a_{n}|^{2}\right)^{1/2}.
$$
This gives \eqref{eq3.22} and finishes the proof.
\end{proof}

\section{Other function spaces}\label{sec4}

Let $A(\mathbb{D})$ denote the disc algebra, that is, the space of analytic functions in~$\mathbb{D}$ which extend continuously to $\overline{\mathbb{D}}=\{z\in\mathbb{C}: |z|\le 1\}$. The only inner functions which belong to~$A(\mathbb{D})$ are the finite Blaschke products. See ~\cite[p.~72]{Ga}. Our next result is an easy consequence of Theorem~\ref{theo1.5}.

\begin{theorem}\label{theo4.1}
Let $f$ be an inner function with $f(0)=0$ which is not a rotation and let $\{a_{n}\}$ be a sequence of complex numbers not identically zero. Then $\sum\limits_{n=1}^{\infty} a_{n}f^{n}\in A(\mathbb{D})$ if and only if $f$ is a finite Blaschke product and $\sum\limits_{n=1}^{\infty} |a_{n}|<\infty$.
\end{theorem}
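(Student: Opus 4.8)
The implication from right to left is immediate: if $f$ is a finite Blaschke product then so is every iterate $f^{n}$, hence $f^{n}\in A(\mathbb{D})$ with $\|f^{n}\|_{\infty}=1$; since $\sum|a_{n}|<\infty$ the series $\sum a_{n}f^{n}$ converges uniformly on $\overline{\mathbb{D}}$, and being a uniform limit of elements of the closed subspace $A(\mathbb{D})$ of $C(\overline{\mathbb{D}})$, its sum lies in $A(\mathbb{D})$. So the content is the converse.

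Assume $F=\sum_{n\ge 1}a_{n}f^{n}\in A(\mathbb{D})$. First I would observe that $\sum|a_{n}|^{2}<\infty$: otherwise, by Lemma~\ref{lem2.3}, the partial sums of $\sum a_{n}f^{n}$ fail to be Cauchy in $\mathbb{H}^{2}(\mathbb{D})$, so the series cannot represent a function of $A(\mathbb{D})\subset\mathbb{H}^{2}(\mathbb{D})$. Granting $\sum|a_{n}|^{2}<\infty$, Corollary~\ref{coro1.3} gives $F\in\operatorname{BMOA}$ with $F(\xi)=\sum a_{n}f^{n}(\xi)$ at a.e.\ $\xi\in\partial\mathbb{D}$, while $F\in A(\mathbb{D})$ is bounded on $\overline{\mathbb{D}}$; hence Theorem~\ref{theo1.5}, applied with $I=\partial\mathbb{D}$, yields $\sum|a_{n}|<\infty$. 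The only point left is to show that $f$ must be a finite Blaschke product.

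For this I would use a composition identity. Since $f^{n}=f^{n-1}\circ f$ (with $f^{0}=\operatorname{id}$) and $\sum|a_{n}|<\infty$, the function $G(w)=\sum_{n\ge 1}a_{n}f^{n-1}(w)$ is bounded and analytic on $\mathbb{D}$, satisfies $G(0)=0$, and $F=G\circ f$ on $\mathbb{D}$. Moreover $G\not\equiv 0$: otherwise $F=G\circ f\equiv 0$, contradicting $\|F\|_{2}^{2}\ge\tfrac{1-|f'(0)|}{1+|f'(0)|}\sum|a_{n}|^{2}>0$ from Lemma~\ref{lem2.3}; since $G(0)=0$, it follows that $G$ is nonconstant, so $G(\mathbb{D})$ is a nonempty open set. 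Now argue by contradiction: if $f$ is not a finite Blaschke product, then $f\notin A(\mathbb{D})$ (the only inner functions in the disc algebra being finite Blaschke products), so $f$ fails to extend continuously at some $\eta_{0}\in\partial\mathbb{D}$, and by the classical dichotomy for the boundary behaviour of inner functions the cluster set of $f$ at $\eta_{0}$ is all of $\overline{\mathbb{D}}$. Hence for every $w\in\mathbb{D}$ there are points $z_{j}\to\eta_{0}$ in $\mathbb{D}$ with $f(z_{j})\to w$, so $F(z_{j})=G(f(z_{j}))\to G(w)$ by continuity of $G$ on $\mathbb{D}$; thus the cluster set of $F$ at $\eta_{0}$ contains $G(\mathbb{D})$, which has more than one point. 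This contradicts $F\in A(\mathbb{D})$, for which the cluster set of $F$ at $\eta_{0}$ must be the single point $\{F(\eta_{0})\}$. Therefore $f$ is a finite Blaschke product, which finishes the proof.

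The easy direction and the deduction of $\sum|a_{n}|<\infty$ are routine once Theorem~\ref{theo1.5} is available, so the main work is the last step: that $f$ itself is a finite Blaschke product. The crux there is the factorization $F=G\circ f$ together with the fact that a non–finite–Blaschke inner function has full cluster set $\overline{\mathbb{D}}$ at each of its singularities; no hard estimate is involved, but one must check carefully that $G$ is genuinely nonconstant, which is where Lemma~\ref{lem2.3} re-enters.
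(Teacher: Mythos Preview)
Your proof is correct and follows essentially the same route as the paper: the easy direction is identical, the deduction of $\sum|a_n|<\infty$ from Theorem~\ref{theo1.5} is the same, and the final step proceeds via the factorization $F=G\circ f$ with $G=\sum a_n f^{n-1}$, arguing by contradiction that $G$ would have to be constant if $f$ were not a finite Blaschke product, then invoking Lemma~\ref{lem2.3} to force all $a_n=0$.

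The only notable difference is in how the contradiction is obtained. The paper appeals to Frostman's Theorem to find, for each $\alpha$ off an exceptional set, sequences $z_k\to\xi$ with $f(z_k)=\alpha$ \emph{exactly}, so that $F(z_k)=G(\alpha)$ on the nose; you instead invoke the dichotomy that the cluster set of an inner function at a singular boundary point is all of $\overline{\mathbb{D}}$, obtaining $f(z_j)\to w$ and then using continuity of $G$ on $\mathbb{D}$ to pass to the limit. Both arguments are standard and equally valid; yours is perhaps slightly more elementary in that it avoids Frostman's capacity statement, at the cost of citing the cluster-set dichotomy. Your added remark that $\sum|a_n|^2<\infty$ must be checked before invoking Theorem~\ref{theo1.5} is a fair point, since that hypothesis is part of the statement of Theorem~\ref{theo1.5}.
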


\begin{proof}
Write $F=\sum\limits_{n=1}^{\infty} a_{n}f^{n}$. The sufficiency follows from the uniform convergence of the partial sums~$\sum\limits_{n=1}^{N}a_{n}f^{n}$. We now prove the necessity. Assume $F\in A(\mathbb{D})$. Since $\sup \{|F(\xi)|: \xi\in \partial \mathbb{D}\}<\infty$, Theorem~\ref{theo1.5} gives that $\sum\limits_{n=1}^{\infty} |a_{n}|<\infty$. We now show that $f$ is a finite Blaschke product arguing by contradiction. Assume that $f$ does not extend analytically at any neighbourhood of the point~$\xi\in\partial \mathbb{D}$. By Frostman's Theorem (see~\cite[p.~77]{Ga}) there exists  a set~$E\subset\mathbb{D}$ of logarithmic capacity zero such that for any $\alpha\in \mathbb{D}\backslash E$ one can find a sequence~$\{z_{k} \} = \{ z_{k}(\alpha) \}$, of points in~$\mathbb{D}$ converging to~$\xi$, such that $f(z_{k})=\alpha$ for any~$k$.
Then
$$
F(z_{k})=\sum^{\infty}_{n=1}a_{n}f^{n-1}(\alpha),\quad k=1,2,\dotsc
$$
Here $f^0 (\alpha) = \alpha , \alpha \in \mathbb{D}$. Since $\{z_{k}\}$ converges to~$\xi$ and $F \in A(\mathbb{D})$, for any $\alpha\in\mathbb{D}\backslash E$ we have
$$
\sum_{n=1}^{\infty} a_{n}f^{n-1}(\alpha)=F(\xi).
$$
Hence the function $\sum\limits^{\infty}_{n=1}a_{n}f^{n-1}$ is constant in $\mathbb{D}$. Since it  vanishes at the origin, we deduce\linebreak $\sum\limits_{n=1}^{\infty} a_{n}f^{n-1}\equiv 0$. Hence $\sum\limits_{n=1}^{\infty} a_{n}f^{n}\equiv 0$. By Lemma \ref{lem2.3}, $a_n=0$ for any $n \geq 1$, which gives the contradiction.
\end{proof}

Let $\mathcal{D}$ denote the Dirichlet space of analytic functions~$f$ in~$\mathbb{D}$ such that
$$
\|f\|^{2}_{\mathcal{D}}=\int_{\mathbb{D}}|f'(z)|^{2}\,dA(z)<\infty,
$$
where $dA(z)$ denotes the area measure. Note that $\|f\|^{2}_{\mathcal{D}}$ can be understood as the area of the image domain~$f(\mathbb{D})$, counting multiplicities. Hence the only inner functions which belong to the Dirichlet space are finite Blaschke products.  Our next result is a description of linear combinations of iterates of an inner function which belong to the Dirichlet space.

\begin{theorem}\label{theo4.2}
Let $f$ be a finite Blaschke product with $N>1$~zeros and with $f(0)=0$. Let $\{a_{n}\}$ be a sequence of complex numbers with $\sum\limits_{n=1}^{\infty}|a_{n}|^{2}<\infty$ and $F=\sum\limits_{n=1}^{\infty}a_{n}f^{n}$. Then $F \in \mathcal{D}$ if and only if $\sum\limits^{\infty}_{n=1}|a_{n}|^{2}N^{n} < \infty$. Moreover there exists a universal constant $C=C(f)>0$ such that
$$
C^{-1}\sum^{\infty}_{n=1}|a_{n}|^{2}N^{n}\le \int_{\mathbb{D}}|F'(z)|^{2}\,dA(z)\le C\sum^{\infty}_{n=1}|a_{n}|^{2}N^{n}.
$$
\end{theorem}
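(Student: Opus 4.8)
The plan is to compute $\int_{\mathbb D}|F'|^2\,dA$ directly from the dynamics of $f$. Since $f$ is a finite Blaschke product with $N$ zeros, each iterate $f^m$ is a proper holomorphic self-map of $\mathbb D$ of degree $N^m$, so the area formula gives $\int_{\mathbb D}\psi(f^m(z))\,|(f^m)'(z)|^2\,dA(z)=N^m\int_{\mathbb D}\psi\,dA$ for every $\psi\in L^1(\mathbb D)$. Combining this with the mean value property $\int_{\mathbb D}g'\,dA=\pi\,g'(0)$ (valid for $g$ analytic on $\overline{\mathbb D}$, in particular for any finite Blaschke product) and the factorization $f^k=f^{k-n}\circ f^n$ for $k\ge n$, one computes the Gram matrix of the derivatives of the iterates: writing $\langle u,v\rangle=\int_{\mathbb D}u\overline v\,dA$,
$$
\langle (f^k)',(f^n)'\rangle=N^n\int_{\mathbb D}(f^{k-n})'(w)\,dA(w)=\pi\,N^n f'(0)^{k-n}\qquad (k\ge n),
$$
using $(f^{k-n})'(0)=f'(0)^{k-n}$ from the chain rule, and the conjugate expression for $k<n$.

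Consequently, for any finite sum $F_M=\sum_{n=1}^{M}a_nf^n$,
$$
\frac1\pi\int_{\mathbb D}|F_M'|^2\,dA=\sum_{n=1}^{M}|a_n|^2N^n+2\operatorname{Re}\sum_{1\le n<k\le M}a_k\overline{a_n}\,N^n f'(0)^{k-n}.
$$
Setting $b_n=a_nN^{n/2}$ and $\mu=f'(0)N^{-1/2}$ (so $|\mu|<1$, since $|f'(0)|\le 1$ and $N\ge 2$), the right-hand side is the quadratic form of a Hermitian Toeplitz matrix in the variables $b_n$, with symbol
$$
h(\theta)=\sum_{j\ge 0}\mu^j e^{ij\theta}+\sum_{j\ge 1}\overline{\mu}^{\,j}e^{-ij\theta}=\frac{1-|\mu|^2}{|1-\mu e^{i\theta}|^2},
$$
a dilated Poisson kernel, hence $\tfrac{1-|\mu|}{1+|\mu|}\le h(\theta)\le\tfrac{1+|\mu|}{1-|\mu|}$. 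Since the quadratic form of a Toeplitz matrix with symbol $h$ lies between $(\inf h)\,\|b\|_2^2$ and $(\sup h)\,\|b\|_2^2$, and $\|b\|_2^2=\sum|a_n|^2N^n$, this yields
$$
\frac{\pi(1-|\mu|)}{1+|\mu|}\sum_{n=1}^{M}|a_n|^2N^n\le\int_{\mathbb D}|F_M'|^2\,dA\le\frac{\pi(1+|\mu|)}{1-|\mu|}\sum_{n=1}^{M}|a_n|^2N^n,
$$
with constants depending only on $f$ (through $\mu$). This identity, together with the Poisson-kernel shape of $h$, is the heart of the matter.

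From here the ``if'' direction is immediate: if $\sum|a_n|^2N^n<\infty$, the upper bound applied to differences shows that the partial sums $F_M$ form a Cauchy sequence in $\mathcal D$, hence converge in $\mathcal D$ to some $G$ with $\|G\|_{\mathcal D}^2$ comparable to $\sum|a_n|^2N^n$; since $\mathcal D$-convergence forces locally uniform convergence of the functions, and $F_M\to F$ locally uniformly (the iterates decay geometrically on compacta by Pommerenke's estimate, Lemma~\ref{lem2.1}), we get $G=F$, which gives $F\in\mathcal D$ with the stated two-sided bound.

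For the ``only if'' direction one argues by approximation. Writing $F-F_M=\bigl(\sum_{k\ge 1}a_{M+k}f^k\bigr)\circ f^M=:H_M\circ f^M$ and applying the area formula again, $\|F-F_M\|_{\mathcal D}^2=N^M\|H_M\|_{\mathcal D}^2$; iterating the relation $F^{(j-1)}=a_j\,\mathrm{id}+F^{(j)}\circ f$ with $F^{(j)}=\sum_{k\ge 1}a_{j+k}f^k$ shows, when $F\in\mathcal D$, that $\sup_j\|F^{(j)}\|_{\mathcal D}<\infty$, and leads (since $F^{(j)}\circ f^j=F-F_j$) to the telescoping identity
$$
\pi\sum_{n=1}^{M}|a_n|^2N^n+\|F-F_M\|_{\mathcal D}^2+\sum_{j=1}^{M}N^{j-1}r_j=\|F\|_{\mathcal D}^2,\qquad r_j=2N\operatorname{Re}\bigl(a_j\overline{\langle\mathrm{id},F^{(j)}\rangle_{\mathcal D}}\bigr).
$$
Because $|\langle\mathrm{id},F^{(j)}\rangle_{\mathcal D}|=\pi\,|(F^{(j)})'(0)|=\pi\bigl|\sum_{k\ge 1}a_{j+k}f'(0)^k\bigr|\to 0$, the error $\sum_j N^{j-1}r_j$ is of lower order, forcing $\sum_{n=1}^{M}|a_n|^2N^n$ to stay bounded and hence $\sum|a_n|^2N^n<\infty$; equivalently, one shows that $F$ lies in the $\mathcal D$-closure of the linear span of the iterates, which by the ``if'' direction consists precisely of the series $\sum a_nf^n$ with $\sum|a_n|^2N^n<\infty$. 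The main obstacle is exactly this limiting step: rigorously controlling, in the Dirichlet norm, the interaction between the head $F_M$ and the tail $F-F_M$ (which factors through $f^M$), so as to upgrade the finite-sum estimate to the full series when $F$ is only assumed to lie in $\mathcal D$.
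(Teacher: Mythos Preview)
Your core argument---computing the Gram matrix $\langle(f^k)',(f^n)'\rangle=\pi N^{\min(k,n)}f'(0)^{k-n}$ via the area formula and the mean value property, then recognizing the resulting quadratic form in $b_n=a_nN^{n/2}$ as a Toeplitz form with Poisson-kernel symbol bounded above and below---is exactly the paper's proof. The paper simply writes the expansion $\int_{\mathbb D}|F'|^2\,dA=\vec a^{\,t}T\vec a$ for the full series and reads off the two-sided bound from the spectrum of $T$; it does not spell out any limiting step, so your Cauchy-sequence treatment of the ``if'' direction is already more careful than the paper.

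The gap is in your ``only if'' argument. First, a small slip: the cross term should carry $\overline{f'(0)}$, not $N$, since $\langle\mathrm{id},F^{(j)}\circ f\rangle_{\mathcal D}=\pi\,\overline{(F^{(j)}\circ f)'(0)}=\pi\,\overline{f'(0)}\,\overline{(F^{(j)})'(0)}$, so the correct telescoping identity is
\[
\|F\|_{\mathcal D}^2-N^M\|F^{(M)}\|_{\mathcal D}^2=\pi\sum_{j=1}^{M}|a_j|^2N^{j-1}+2\pi\sum_{j=1}^{M}N^{j-1}\operatorname{Re}\bigl(a_j f'(0)\,(F^{(j)})'(0)\bigr).
\]
More seriously, your claim that the error term is ``of lower order'' is not justified: you would need to bound $\sum_j N^{j-1}|a_j|\cdot|(F^{(j)})'(0)|$, and since $|(F^{(j)})'(0)|\lesssim(\sum_{k>j}|a_k|^2)^{1/2}$, any Cauchy--Schwarz estimate introduces $\sum_j N^{j-1}(\text{tail}_j)^2$, which you have no control over without already assuming $\sum|a_n|^2N^n<\infty$. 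The argument as written is circular.

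A clean way to close the gap: from $F^{(m)}\circ f^m=F-F_m\in\mathcal D$ and the area identity you get $F^{(m)}\in\mathcal D$ for every $m$; then the splitting $F=F_m+F^{(m)}\circ f^m$ lets you compute $\langle F',(f^m)'\rangle$ exactly and verify that it equals $\sum_n a_n\langle(f^n)',(f^m)'\rangle$. Since your Toeplitz bound says precisely that $\{(f^m)'/\sqrt{\pi N^m}\}$ is a Riesz sequence, the frame inequality gives $\sum_m N^{-m}|\langle F',(f^m)'\rangle|^2\le C\|F'\|_{L^2}^2<\infty$. Inverting the (bounded, invertible) Toeplitz matrix on $\ell^2$ and checking that the only sequence $b$ with $\sum_n b_nT'_{mn}=0$ for all $m$ is $b=0$ (a two-line recursion using $|\mu|<1$) then forces $(a_nN^{n/2})\in\ell^2$.
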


\begin{proof}
Note that
$$
\int_{\mathbb{D}}|F'(z)|^{2}\,dA(z)=\sum^{\infty}_{n=1}|a_{n}|^{2}b_{n,n}+2\operatorname{Re} \sum^{\infty}_{n=1}\overline{a}_{n}\sum_{k>n}a_{k}b_{k,n},
$$
where
$$
b_{k,n}=\int_{\mathbb{D}}\overline{(f^{n})'(z)} (f^{k})'(z)\,dA(z),\quad k,n=1,2,\dotsc
$$
Since $f^{n}$ is a Blaschke product with $N^{n}$~zeros, the area counting multiplicities, of~$f^{n}(\mathbb{D})$ is~$\pi N^{n}$. Then
$$
\int_{\mathbb{D}}|(f^{n})'(z)|^{2}\,dA(z)=\pi N^{n}.
$$
On the other hand, the change of variables formula with multiplicities (see~\cite[p.~122]{EG}) gives
$$
b_{k,n}=\int_{\mathbb{D}}|(f^{n})'(z)|^{2} (f^{k-n})' (f^{n}(z))\,dA(z)=
\int_{\mathbb{D}} N^{n}(f^{k-n})'(w)\,dA(w),\quad k>n.
$$
The mean value property gives $b_{k,n}=\pi N^{n} f'(0)^{k-n}$, $k>n$. Hence
$$
\frac{1}{\pi}\int_{\mathbb{D}} |F'(z)|^{2}\,dA(z)=\sum^{\infty}_{n=1}|a_{n}|^{2}N^{n}+2\operatorname{Re} \sum^{\infty}_{n=1} \overline{a}_{n}N^{n}\sum_{k>n}a_{k} f'(0)^{k-n} =\vec{a}^{\,t} T \vec{a},
$$
where $\vec{a}$ denotes the vector~$(a_{n} N^{n/2})_{n\ge 1}$ and $T$ is the Toeplitz matrix whose entries are
$$
t_{n,k}=(f'(0) N^{-1/2})^{k-n}, \quad k \geq n;  \qquad t_{n,k}=(\overline{f'(0)} N^{-1/2})^{n-k}, \quad n \geq k.
$$
Consider the symbol
$$
t(\xi)=\sum^{\infty}_{n=-\infty} t_{n,0} \xi^{n}= \frac{1-|f'(0)|^{2}N^{-1}} {|1- f'(0) N^{-1/2}\xi|^{2}},\quad \xi\in\partial\mathbb{D}.
$$
It is well known that $T$ diagonalizes and its eigenvalues are between the minimum and the maximum of~$t$. See~\cite{BG}. Since $C(f,N)^{-1}\le t(\xi)\le C(f,N)$, a.e.\ $\xi\in\partial\mathbb{D}$, where
$$
C(f,N)=\frac{1+|f'(0)|N^{-1/2}}{1-|f'(0)|N^{-1/2}},\quad \text{a.e.\ } \xi\in\partial\mathbb{D},
$$
we deduce that
$$
C(f,N)^{-1}\sum^{\infty}_{n=1}|a_{n}|^{2} N^{n}\le \frac{1}{\pi}\int_{\mathbb{D}} |F'(z)|^{2}\,dA(z) \le C(f,N) \sum^{\infty}_{n=1} |a_{n}|^{2} N^{n}.
$$
\end{proof}

Let $\mathcal{B}$ denote the Bloch space of analytic functions~$f$ in~$\mathbb{D}$ such that
$$
\|f\|_{\mathcal{B}}=\sup_{z\in\mathbb{D}} (1-|z|^{2}) |f'(z)|<\infty.
$$
It is well known that a lacunary series is in the Bloch space if and only if its coefficients are uniformly bounded. See~\cite{ACP}. In our setting this condition is still sufficient but we will see that it is not necessary.

\begin{theorem}\label{theo4.3}
Let $f$ be an inner function with $f(0)=0$ which is not a rotation and let $\{a_{n}\}$ be a bounded sequence of complex numbers. Then $F=\sum\limits_{n=1}^{\infty} a_{n}f^{n}\in\mathcal{B}$ and there exists a constant $C=C(f)>0$ such that $\|F\|_{\mathcal{B}}\le C \sup\limits_{n \geq 1}|a_{n}|$.
\end{theorem}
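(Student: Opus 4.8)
The plan is to bound the Bloch seminorm $\sup_{z}(1-|z|^2)|F'(z)|$ directly, by the triangle inequality, proving the term-by-term estimate
$$
\sum_{n=1}^{\infty}(1-|z|^2)\,|(f^n)'(z)|\le C(f),\qquad z\in\mathbb{D};
$$
once this is in hand, $(1-|z|^2)|F'(z)|\le(\sup_k|a_k|)\,C(f)$ is immediate, after checking via the Denjoy--Wolff Theorem and Corollary~\ref{coro2.2} that $\sum a_nf^n$ and $\sum a_n(f^n)'$ converge uniformly on compact subsets of $\mathbb{D}$, so that $F$ is analytic in $\mathbb{D}$ with $F'=\sum a_n(f^n)'$. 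I want to stress at the outset that no cancellation is used here: this is a purely absolute convergence statement, and the subtlety is precisely that $1-|f^n(z)|^2$ does \emph{not} decay as $n\to\infty$, so the bare Schwarz bound $(1-|z|^2)|(f^n)'(z)|=D_h(f^n)(z)(1-|f^n(z)|^2)\le 1-|f^n(z)|^2$ is useless for summing. The geometric decay has to be drawn out of $|(f^n)'(z)|$ by exploiting the self-composition structure of $f^n$ near the Denjoy--Wolff fixed point $0$; this is the step I expect to be the crux.

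Concretely, I would fix $z$, take $r_0=r_0(f)$ and $c_0=c_0(f)$ from Corollary~\ref{coro2.2}, and split the sum at the first integer $\ell=\ell(z)\ge1$ for which $|f^{\ell}(z)|\le r_0/2$; this is finite since $f^n(z)\to0$. For the early range $1\le n<\ell$, Schwarz's Lemma shows that the moduli $|f^n(z)|$ are non-increasing in $n$, hence stay above $r_0/2$, and iterating part~(a) of Lemma~\ref{lem2.1} (with $r=r_0/2$) along the orbit $f^n(z),f^{n+1}(z),\dots,f^{\ell-1}(z)$ gives a constant $0<c_1=c_1(f)<1$ with $1-|f^n(z)|\le c_1^{\,\ell-n}$. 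Together with $D_h(f^n)(z)\le1$ this yields $(1-|z|^2)|(f^n)'(z)|\le 2c_1^{\,\ell-n}$, so $\sum_{n=1}^{\ell-1}(1-|z|^2)|(f^n)'(z)|\le 2(1-c_1)^{-1}$.

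For the late range $n\ge\ell$ I would factor $f^n=f^{\,n-\ell}\circ f^{\ell}$, put $w=f^{\ell}(z)$ (so $|w|\le r_0/2$), and use the chain rule together with $D_h(f^{\ell})(z)\le1$ to obtain
$$
(1-|z|^2)|(f^n)'(z)|=(1-|z|^2)|(f^{\ell})'(z)|\cdot|(f^{\,n-\ell})'(w)|\le|(f^{\,n-\ell})'(w)|.
$$
It then remains to bound $\sum_{m\ge0}|(f^m)'(w)|$ uniformly over $|w|\le r_0/2$. Since Corollary~\ref{coro2.2} gives $|f^m(\zeta)|\le c_0^m$ on $\{|\zeta|\le r_0\}$, and for $|w|\le r_0/2$ the circle of radius $r_0/2$ about $w$ stays inside $\{|\zeta|<r_0\}$, a Cauchy estimate produces $|(f^m)'(w)|\le 2r_0^{-1}c_0^m$ for $m\ge1$, whence $\sum_{m\ge0}|(f^m)'(w)|\le 1+2r_0^{-1}(1-c_0)^{-1}$. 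Adding the two ranges gives the displayed inequality and finishes the proof. The points needing care are the justification of term-by-term differentiation and the bookkeeping around the threshold $r_0/2$, which must be small enough that the Cauchy-estimate circle stays inside the region where Corollary~\ref{coro2.2} applies while also being the level at which the Lemma~\ref{lem2.1}(a) iteration is run; beyond this I do not foresee a genuine obstacle.
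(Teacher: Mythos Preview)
Your proof is correct and follows essentially the same strategy as the paper: split the sum at the first index where the orbit of $z$ enters a fixed small disc, control the early range via Lemma~\ref{lem2.1}(a) together with the Schwarz--Pick bound $(1-|z|^2)|(f^n)'(z)|\le 1-|f^n(z)|^2$, and control the late range via Corollary~\ref{coro2.2} combined with a Cauchy estimate. The only cosmetic difference is that the paper defines the threshold via a hyperbolic neighbourhood of $z$ and applies Cauchy's estimate directly at $z$, whereas you factor $f^n=f^{\,n-\ell}\circ f^{\ell}$ and apply Cauchy at $w=f^{\ell}(z)$; both routes lead to the same geometric series.
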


\begin{proof}
Let $0<r_{0}=r_{0}(f)<1$  and $0<c_{0}=c_{0}(f)<1$ be the constants given by Corollary~\ref{coro2.2}. Since $f^{n}$ tends to~$0$ uniformly on compacts of~$\mathbb{D}$, for any $z\in \mathbb{D}$ we can pick $N(z)$ to be the minimum positive  integer $n$ such that $|f^{n}(w)|\le r_{0}$ for any $w\in \mathbb{D}$ with $\rho(w,z)\le 1/2$. Corollary~\ref{coro2.2} gives
\begin{equation}\label{eq4.1}
\sup\{ |f^{n}(w)|: \rho(w,z)\le r_{0}\}\le c_{0}^{n-N(z)},\quad n\ge N(z), \quad z \in \mathbb{D}. 
\end{equation}
By Cauchy's estimate, there exist a universal constant $c_{1}>0$ such that
$$
(1-|z|)| (f^{n})'(z)|\le c_{1}c_{0}^{n-N(z)},\quad n\ge N(z), \quad z \in \mathbb{D}. 
$$
Hence
\begin{equation}\label{eq4.2}
(1-|z|)\sum_{n\ge N(z)}|(f^{n})'(z)|\le c_{1} (1-c_{0})^{-1}, \quad z \in \mathbb{D}. 
\end{equation}
Let $M= N(z) - 1$. Note that there exists a point $w$ with $\rho (w, z) \leq 1/2 $ such that $|f^M (w)| \geq r_0$. According to part~(a) of Lemma~\ref{lem2.1}, there exists a constant~$0<c_{2}<1$ such that $1-|f^{n}(w)|\le c_{2}^{M-n}$ for any $n <  N(z)$.  By Schwarz's Lemma there exists a constant $c_3 >0$ such that $1-|f^{n}(z)|\le c_3 c_{2}^{M-n}$ for any $n < N(z)$. Since $(1-|z|^{2})|(f^{n})'(z)|\le 1 - |f^{n}(z)|^{2}$, $z\in \mathbb{D}$, we deduce
\begin{equation}\label{eq4.3}
(1-|z|^{2})\sum_{n=1}^{M} |(f^{n})'(z)|\le c_3 (1-c_{2})^{-1}, \quad z \in \mathbb{D}. 
\end{equation}
The estimates \eqref{eq4.2} and \eqref{eq4.3} give that $\|F\|_{\mathcal{B}}\le c\sup\limits_{n}|a_{n}|$ with $c=c_{1}(1-c_{0})^{-1}+ c_3 (1-c_{2})^{-1}$.
\end{proof}

Next we will show that the converse estimate in Theorem~\ref{theo4.3} does not hold, that is, there exist an inner function~$f$ with $f(0)=0$ and unbounded sequence~$\{a_{n}\}$ of complex numbers such that $\sum a_{n}f^{n}\in\mathcal{B}$. We start with an auxiliary result on the hyperbolic derivative~$D_{h}f$ defined in \eqref{eq2.1}. Denote $f^{0}(z)=z, z \in \mathbb{D}$.

\begin{lemma}\label{lem4.4}
Let $f\in H^{\infty}(\mathbb{D})$, $\|f\|_{\infty}\le 1$. Then for any $n=1,2,\dotsc$, we have
$$
D_{h}(f^{n})(z)=\prod^{n-1}_{k=0}D_{h}(f) (f^{k}(z)),\quad z\in\mathbb{D}.
$$
\end{lemma}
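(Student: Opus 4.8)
The plan is to reduce everything to the ordinary chain rule together with a telescoping product. First I would record the elementary fact that $(f^{n})'(z)=\prod_{k=0}^{n-1}f'(f^{k}(z))$ for every $z\in\mathbb{D}$ and every $n\geq 1$, where $f^{0}(z)=z$. This follows by induction on $n$: writing $f^{n}=f\circ f^{n-1}$, the chain rule gives $(f^{n})'(z)=f'(f^{n-1}(z))\,(f^{n-1})'(z)$, and the inductive hypothesis handles the second factor.

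Next I would substitute this expression into the definition \eqref{eq2.1} of the hyperbolic derivative to get
$$
D_{h}(f^{n})(z)=\frac{(1-|z|^{2})\prod_{k=0}^{n-1}|f'(f^{k}(z))|}{1-|f^{n}(z)|^{2}},\qquad z\in\mathbb{D}.
$$
On the other hand, by definition,
$$
\prod_{k=0}^{n-1}D_{h}(f)(f^{k}(z))=\left(\prod_{k=0}^{n-1}|f'(f^{k}(z))|\right)\prod_{k=0}^{n-1}\frac{1-|f^{k}(z)|^{2}}{1-|f^{k+1}(z)|^{2}}.
$$
The last product telescopes, since the numerator at index $k$ coincides with the denominator at index $k-1$; its value is therefore $(1-|f^{0}(z)|^{2})/(1-|f^{n}(z)|^{2})=(1-|z|^{2})/(1-|f^{n}(z)|^{2})$. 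Comparing the two displays yields the claimed identity.

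The computation is valid provided all the quantities $1-|f^{k}(z)|^{2}$, $k=0,\dots,n$, are strictly positive. If $f$ is a non-constant analytic selfmap of $\mathbb{D}$ with $\|f\|_{\infty}\leq 1$, then by the maximum principle $f(\mathbb{D})\subset\mathbb{D}$, hence $|f^{k}(z)|<1$ for all $k\geq 0$ and $z\in\mathbb{D}$, so there is no division by zero; the case where $f$ is a unimodular constant is trivial. I do not expect any real obstacle here: the argument is a direct computation, the only mildly delicate point being the bookkeeping of the telescoping indices.
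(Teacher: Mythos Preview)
Your proof is correct and follows essentially the same approach as the paper: both arguments use the chain rule to express $(f^{n})'(z)$ as $\prod_{k=0}^{n-1}f'(f^{k}(z))$ and then observe that the factors $(1-|f^{k}(z)|^{2})$ telescope when forming the product of hyperbolic derivatives. Your version is slightly more detailed, in that you explicitly justify why no division by zero occurs.
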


\begin{proof}
Note that $(f^{n})'(z)=\prod\limits_{k=0}^{n-1} f'(f^{k}(z))$, $n\ge 1$. Then
$$
(1-|z|^{2})\frac{(f^{n})'(z)}{1-|f^{n}(z)|^{2}}=\prod^{n-1}_{k=0}
\frac{(1-|f^{k}(z)|^{2}f'(f^{k}(z))}{1-|f^{k+1}(z)|^{2}}.
$$
\end{proof}

Let $f\in H^{\infty}$ with $\|f\|_{\infty}\le 1$. Schwarz's Lemma gives that $D_{h}f(z)\le 1$, $z\in\mathbb{D}$. Given any positive gauge function $w\colon [0,1]\to (0,+\infty)$ satisfying a mild regularity condition, such that
$$
\int^{1}_{0}\frac{w^{2}(t)}{t}\,dt = 	\infty, 
$$
there exists an inner function~$f$ with $f(0)=0$ such that $D_{h}f(z)\le w(1-|z|)$, $z\in\mathbb{D}$. See \cite{AAN}  or \cite{Sm}. In particular, given any $0< \tau <1$, there exists an inner function~$f$ with $f(0)=0$ such that  $D_{h}f(z)\le \tau$. Let $\{a_{n}\}$ be any sequence of complex numbers such that
\begin{equation}\label{eq4.4}
\sum^{\infty}_{n=1}|a_{n}| \tau^{n}<\infty.
\end{equation}
We next show that the function~$F=\sum\limits^{\infty}_{n=1}a_{n}f^{n}$ is in the Bloch space. Note that
$$
(1-|z|^{2})|F'(z)| \leq \sum^{\infty}_{n=1} |a_{n} | |(f^{n})' (z) | (1-|z|^{2})=\sum^{\infty}_{n=1} |a_{n}|
(1-|f^{n}(z)|^{2})D_{h}(f^{n})(z) , \quad z \in \mathbb{D}. 
$$ 
Apply Lemma~\ref{lem4.4} to deduce $D_{h}(f^{n})(z)\le \tau^{n}$, $n\ge 1$, $z \in \mathbb{D}$. Then
$$
(1-|z|^{2})|F'(z)|\le \sum^{\infty}_{n=1}|a_{n}| \tau^{n}, \quad z \in \mathbb{D}.  
$$
Hence if condition~\eqref{eq4.4} holds, we have $F\in\mathcal{B}$.

We finish this section showing the following converse of part (b) of Corollary 1.3. 

\begin{corollary}\label{coro1.10}
Let $f$ be an inner function with $f(0)=0$ which is not a rotation and let $\{a_{n}\}$ be a non-identically zero sequence of complex numbers with $\sum\limits^{\infty}_{n=1}|a_{n}|^{2}<\infty$. Assume 
 $F=\sum\limits_{n=1}^{\infty} a_{n}f^{n}\in \operatorname{VMOA}(\mathbb{D})$. Then $f$ is a finite Blaschke product.
\begin{proof}
Assume $f$ is not a finite Blaschke product. then there exists a sequence of points $\{z_k \}$ in $\mathbb{D}$ with $|z_k| \to 1$ such that $f(z_k) \to 0$, as $k \to \infty$. Lemma 3.2 gives that 
$$
\int_{\partial\mathbb{D}} \left| F(\xi)- F(z_k) \right|^{2} P(z_k,\xi)\,dm(\xi)\ge \frac{1}{2}\, \frac{1-|f'(0)|}{1+|f'(0)|} \sum^{\infty}_{n=1}|a_{n}|^{2},
$$
if $k$ is sufficiently large. Since the integral above tends to $0$ as $k$ tends to $\infty$, we deduce that $a_n =0$ for any $n$. This finishes the proof. 
\end{proof}
\end{corollary}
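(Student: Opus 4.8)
The plan is to argue by contradiction: assume $f$ is inner with $f(0)=0$, not a rotation, and \emph{not} a finite Blaschke product, and assume $F=\sum_{n\ge1}a_nf^n\in\operatorname{VMOA}(\mathbb{D})$ with $\sum|a_n|^2<\infty$; I will deduce that $a_n=0$ for every $n$, contradicting the hypothesis that $\{a_n\}$ is not identically zero.

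The first and least routine step is to produce a sequence $\{z_k\}\subset\mathbb{D}$ with $|z_k|\to1$ and $f(z_k)\to0$. Write the canonical factorization $f=cBS_\mu$, where $|c|=1$, $B$ is a Blaschke product, and $S_\mu$ is the singular inner function of a finite positive measure $\mu$ on $\partial\mathbb{D}$ that is singular with respect to $m$. Since $f$ is not a finite Blaschke product, either $B$ has infinitely many zeros $\{\alpha_j\}$, in which case $|\alpha_j|\to1$ and $f(\alpha_j)=0$ and we set $z_k=\alpha_k$; or $B$ is a finite Blaschke product and $S_\mu$ is nonconstant. In the latter case $S_\mu$ is zero-free on $\mathbb{D}$, hence bounded below on compact subsets, while $\inf_{\mathbb{D}}|S_\mu|=\exp(-\sup_{\mathbb{D}}P\mu)=0$ because the Poisson integral $P\mu$ of the nonzero singular measure $\mu$ is unbounded (a bounded $P\mu$ would force $\mu$ to be absolutely continuous); thus there are $w_k$ with $|w_k|\to1$ and $|S_\mu(w_k)|\to0$, whence $|f(w_k)|\le|S_\mu(w_k)|\to0$ and we set $z_k=w_k$. (Alternatively, one can invoke Frostman's theorem together with a diagonalization over $\alpha\to0$, exactly as in the proof of Theorem~\ref{theo4.1}.)

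The second step feeds this sequence into Lemma~\ref{lem3.2} with $M=1$. Let $\varepsilon=\varepsilon(f)$ be the constant of that lemma. For $k$ large we have $|f(z_k)|\le\varepsilon$, so applying the lemma to each partial sum $F_N=\sum_{n=1}^Na_nf^n$ gives
$$\int_{\partial\mathbb{D}}\bigl|F_N(\xi)-F_N(z_k)\bigr|^2P(z_k,\xi)\,dm(\xi)\ \ge\ \frac12\,\frac{1-|f'(0)|}{1+|f'(0)|}\sum_{n=1}^N|a_n|^2.$$
By Lemma~\ref{lem2.3} the series converges in $\mathbb{H}^2(\mathbb{D})$, so $F_N\to F$ there and $F_N(z_k)\to F(z_k)$; since $z_k$ is fixed and $P(z_k,\cdot)$ is then bounded on $\partial\mathbb{D}$, letting $N\to\infty$ yields
$$\int_{\partial\mathbb{D}}\bigl|F(\xi)-F(z_k)\bigr|^2P(z_k,\xi)\,dm(\xi)\ \ge\ \frac12\,\frac{1-|f'(0)|}{1+|f'(0)|}\sum_{n=1}^\infty|a_n|^2$$
for all sufficiently large $k$. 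But $F\in\operatorname{VMOA}(\mathbb{D})$ means precisely that the left-hand side tends to $0$ as $|z_k|\to1$, and $|f'(0)|<1$ since $f$ is not a rotation; hence $\sum_{n\ge1}|a_n|^2=0$, so $a_n=0$ for every $n$, the desired contradiction, and $f$ must be a finite Blaschke product. The only substantial input is Lemma~\ref{lem3.2}; the heart of the matter is that membership in VMOA forces the Poisson-weighted oscillation of $F$ to vanish along a boundary-approaching sequence, while the existence of such a sequence with $f(z_k)\to0$ — the main obstacle here — is the standard fact that a non-finite-Blaschke inner function takes values arbitrarily close to $0$ arbitrarily near $\partial\mathbb{D}$.
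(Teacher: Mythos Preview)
Your proof is correct and follows essentially the same route as the paper's: produce $z_k\to\partial\mathbb{D}$ with $f(z_k)\to0$, invoke Lemma~\ref{lem3.2} with $M=1$ to get a uniform lower bound on the Poisson oscillation of $F$ at $z_k$, and contradict the VMOA condition. You in fact supply two details the paper leaves implicit---the construction of $\{z_k\}$ via the canonical factorization, and the passage from the finite-sum statement of Lemma~\ref{lem3.2} to the full series via $\mathbb{H}^2$-convergence---both of which are handled correctly.
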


\section{Open problems}\label{sec5}
We close the paper mentioning some open problems we have not explored.


\paragraph{\ Analytic continuation and smoothness.}
Hadamard's Theorem says that if a lacunary power series can be extended analytically across an arc of the unit circle, it can actually be extended analytically to a neighbourhood of the closed unit disc (see \cite[p.~208]{Zy}). It is natural to ask if the following analogous result holds.

\paragraph{Problem 1.} Let $f$ be an inner function with $f(0)=0$ which is not a rotation. Let $\{a_{n}\}$ be a sequence of complex numbers whose modulus are square summable and consider  $F=\sum\limits^{\infty}_{n=1}a_{n}f^{n}$. Assume that $F$ extends analytically across an arc~$I\subset \partial\mathbb{D}$. Is it true that there exist $n_{0}>0$ such that $a_{n}=0$ for any $n>n_{0}$ and that $f^n$ extends analytically across $I$ for any $n \leq n_0$?

\vspace{0.2cm}

For $0<\alpha<1$, let $\operatorname{Lip}_{\alpha}(\overline{\mathbb{D}})$ denote the space of continuous functions $g$ in $\overline{\mathbb{D}}$ for which there exists a constant $C=C(g)>0$ such that $|g(z) - g(w)| \leq C |z-w|^{\alpha}$, for any pair of points $z, w \in \overline{\mathbb{D}}$.  Lacunary series which belong to $\operatorname{Lip}_{\alpha}(\overline{\mathbb{D}})$ can be described in terms of their coefficients. It is then natural to ask for the corresponding result in our context. 

\paragraph{Problem 2.} Let $f$ be a finite Blaschke product with $f(0)=0$ which is not a rotation. Describe the sequences of complex numbers $\{a_{n}\}$ with square summable modulus, such that  $F=\sum\limits^{\infty}_{n=1}a_{n}f^{n}\in\operatorname{Lip}_{\alpha}(\overline{\mathbb{D}})$.

\paragraph{\ Peano curves.}
Let $\sum a_{k}z^{n_{k}}$ be a lacunary series with $\sum |a_{k}|=\infty$ and $\lim a_{k}=0$. Then for any $w\in\mathbb{C}$, there exists $\xi\in\partial\mathbb{D}$ such that $\sum a_{k}\xi^{n_{k}}$ converges to~$w$ (\cite{We}). It is then natural to ask for the following analogous result.

\paragraph{Problem 3.} Let $f$ be an inner function with $f(0)=0$ which is not a rotation. Let $\{a_{n}\}$ be a sequence of complex numbers with $\sum\limits_{n=1}^\infty |a_{n}|=\infty$, but $\lim_{n \to \infty} a_{n}=0$. Is it true that for any $w\in \mathbb{C}$, there exists $\xi\in \partial\mathbb{D}$ such that $\sum\limits_{n=1}^\infty a_{n}f^{n}(\xi)$ converges to~$w$?


There is also a version of the question above in the interior of the unit disc. Murai proved that any lacunary series $\sum a_{k}z^{n_{k}}$ convergent in $\mathbb{D}$ with $\sum |a_{k}|=\infty$, takes any complex value infinitely often. See \cite{Mu}. It is natural to ask for the following analogous result.

\paragraph{Problem 4.} Let $f$ be an inner function with $f(0)=0$ which is not a rotation. Let $\{a_{n}\}$ be any sequence of complex numbers such that $F(z)=\sum\limits_{n=1}^{\infty}a_{n} f^{n}(z)$ is analytic in~$\mathbb{D}$.
Assume $\sum\limits_{n=1}^{\infty}  |a_{n}|=\infty$. Is true that for any $w\in\mathbb{C}$ there exists  $z\in\mathbb{D}$ such that $F(z)=w$?

\paragraph{\ Abel's Theorem.}
Let $F(z) = \sum a_n z^n$ be a power series with radius of convergence $1$. Let $\xi \in \partial \mathbb{D}$ such that $ \sum a_n {\xi}^n$ converges. The classical Abel's Theorem says that $F(z)$ tends to $ \sum a_n {\xi}^n$, as $z$ approaches non-tangentially to $\xi$. The classical Hardy-Littlewood High Indices Theorem asserts that for lacunary series, the converse holds. In other words, if $F(z) = \sum a_k z^{n_k}$ is a lacunary series which has limit $L$ when $z$ approaches non-tangentially a point   $\xi \in \partial \mathbb{D}$, then  $ \sum a_k {\xi}^{n_k}$ converges to $L$. It is then natural to ask for the following analogous results. 

\paragraph{Problem 5.} Let $f$ be an inner function with $f(0)=0$ which is not a rotation. Let $\{a_{n}\}$ be any sequence of complex numbers such that $F(z)=\sum\limits_{n=1}^{\infty}a_{n} f^{n}(z)$ is analytic in~$\mathbb{D}$. Let  $\xi \in \partial \mathbb{D}$ such that $f^n (\xi)= \lim\limits_{r \to 1} f^n (r \xi) $ exists for any positive integer $n$. Assume  $ \sum\limits_{n=1}^\infty a_n f^n (\xi)$ converges. Is it true that 
$\lim\limits_{r \to 1} F(r \xi)$ exists? 

\paragraph{Problem 6.} Let $f$ be an inner function with $f(0)=0$ which is not a rotation. Let $\{a_{n}\}$ be any sequence of complex numbers such that $F(z)=\sum\limits_{n=1}^{\infty}a_{n} f^{n}(z)$ is analytic in~$\mathbb{D}$. Assume $F(z)$ has limit when $z$ approaches non-tangentially a point $\xi \in \partial \mathbb{D}$. Is it true that $ \sum\limits_{n=1}^\infty a_n f^n (\xi)$ converges?

\end{document}